\documentclass[11pt,a4paper,reqno,tbtags]{amsart}

\usepackage[margin=1.1in]{geometry}
\usepackage{amsmath,amssymb,mathtools,graphicx}
\usepackage{url}
\usepackage{siunitx}
\usepackage[shortlabels]{enumitem}
\usepackage{tabularray}
\UseTblrLibrary{booktabs}

\newtheorem{theorem}{Theorem}[section]
\newtheorem{lemma}[theorem]{Lemma}
\newtheorem{corollary}[theorem]{Corollary}

\theoremstyle{remark}
\newtheorem{remark}[theorem]{Remark}

\newcommand{\D}{\mathbb{D}}
\newcommand{\N}{\mathbb{N}}
\newcommand{\Z}{\mathbb{Z}}
\newcommand{\Q}{\mathbb{Q}}
\newcommand{\R}{\mathbb{R}}
\newcommand{\J}{\mathcal{J}}
\newcommand{\dd}{\mathop{}\!{d}}
\newcommand{\A}{\mathbf{A}}

\DeclarePairedDelimiter{\abs}{\lvert}{\rvert}
\DeclarePairedDelimiter{\norm}{\lVert}{\rVert}
\DeclarePairedDelimiter{\innerproduct}{\langle}{\rangle}

\title
  [Monotonicity and the de Gennes bound]
  {Monotonicity and the de Gennes bound\\ for the magnetic Neumann Laplacian in the
  disk}

\author[C. Léna]{Corentin Léna}
\address
  [C. Léna]
  {University of Padua, Department of Management and Engineering -- DTG,
  Stradella S. Nicola 3, 36100 Vicenza, and Department of Mathematics
  ``Tullio Levi-Civita'', via Trieste 63, 35121 Padua, Italy}
\email{corentin.lena@unipd.it}

\author[M. Sundqvist]{Mikael Sundqvist}
\address
  [M. Sundqvist]
  {Department of Mathematics, Lund University, Sweden}
\email{mikael.persson\_sundqvist@math.lth.se}

\subjclass[2020]{Primary 35P15; Secondary 81Q10, 82D55}
\keywords{magnetic Neumann Laplacian, magnetic Schr\"odinger operator,
strong diamagnetism, de Gennes constant, angular-momentum crossings, trial states,
unit disk}

\begin{document}

\begin{abstract}
We consider the lowest eigenvalue $\lambda(b)$ of the magnetic Neumann Laplacian
in the unit disk, for a constant magnetic field of strength $b>0$. We prove that
$\lambda$ is strictly increasing on $(0,+\infty)$. This means that strong
diamagnetism holds at every field strength, and not only at large ones. We also
show that the normalized energies at the successive crossings of angular-momentum
branches form a strictly increasing sequence; combined with the strong-field
asymptotics, this gives the global bound $\lambda(b)<\Theta_0 b$, where
$\Theta_0$ is the de Gennes constant. These results settle the three conjectures
formulated by Helffer and Léna for the disk. As a consequence, the local, or
spectral, critical field $H_{C_3}^{\mathrm{loc}}$ of Ginzburg--Landau theory is,
in the disk, uniquely determined for every value of the Ginzburg--Landau
parameter, and not only for large ones.

We also give a second proof of the bound $\lambda(b)<\Theta_0 b$, independent of
the first and of the results of Helffer and Léna, by a direct variational method:
trial states built from the de Gennes ground state for large fields, constant
trial states for small fields, and, on the remaining bounded field interval,
finite-dimensional spaces of polynomial trial states certified by finitely many
exact computations in rational arithmetic. That proof uses no asymptotic input.
It yields in addition an explicit upper bound for $\lambda(b)$, valid above an
explicit field strength, whose two leading terms are those of the strong-field
asymptotics.
\end{abstract}

\maketitle

\section{Introduction and main results}

\subsection{The magnetic Neumann Laplacian in the disk}

Let
\[
 \D=\{\mathbf{x}=(x_1,x_2)\in\R^2\colon x_1^2+x_2^2<1\},
 \qquad
 \A(x_1,x_2)=\frac12(-x_2,x_1).
\]
Thus $\operatorname{curl}\A=1$. For $b\in\R$, let $H_b$ be the self-adjoint
operator in $L^2(\D)$ associated with the closed quadratic form
\[
 q_b[u]=\int_{\D}\abs{(-i\nabla+b\A)u}^2\dd \mathbf{x},
 \qquad u\in H^1(\D).
\]
Equivalently, $H_b=(-i\nabla+b\A)^2$ with magnetic Neumann boundary condition
\[
 \mathbf{n}\cdot(-i\nabla+b\A)u=0
 \quad\text{on }\partial\D,
\]
where $\mathbf{n}$ is the outward unit normal. Since $\A\cdot\mathbf{n}=0$ on the
circle, this boundary condition is simply $\partial_{\mathbf{n}} u=0$. We write
\[
 \lambda(b)=\inf\operatorname{spec}H_b
\]
for the lowest eigenvalue, also called the \emph{ground-state energy}. It can be
shown that the domain $\mathcal D(H_b)$ of $H_b$ is stable under conjugation by
the operator $\Gamma$, defined by $\Gamma u:=\overline{u}$, and that
$H_{-b} \Gamma=\Gamma H_b$. Since $\Gamma$ is anti-unitary, it follows that
$H_{-b}$ and $H_b$ have the same spectrum, and in particular
$\lambda(-b)=\lambda(b)$. We can therefore restrict ourselves to the case $b\ge0$
without loss of generality.

The de Gennes constant is defined as
\begin{equation}\label{eq:Theta0}
 \Theta_0=\inf_{\xi\in\R}\mu(\xi),
\end{equation}
where $\mu(\xi)$ denotes the lowest eigenvalue of
$-d^2/dt^2+(t-\xi)^2$ in $L^2(\R_+)$ with a Neumann condition at the
origin. The strong-field asymptotics for smooth planar domains imply, in the
present case,
\begin{equation}\label{eq:de-gennes-limit}
 \lim_{b\to\infty}\frac{\lambda(b)}b=\Theta_0;
\end{equation}
see~\cite[Proposition~8.3.1]{MR2662319}.

Monotonicity of a magnetic Neumann ground-state energy with respect to the field
strength is usually referred to as strong diamagnetism. It is not a consequence
of the standard diamagnetic inequality, and it is sensitive to the topology of
the domain: in a multiply connected domain, flux effects of Little--Parks type
can produce oscillations, and hence destroy global monotonicity~\cite{MR3324161}.
Fournais and Helffer proved eventual strong diamagnetism for smooth bounded
planar domains~\cite{MR2394546}; see also~\cite{MR2662319} for its role in
surface superconductivity. For the disk, Helffer and Léna obtained a complete
description of the crossings of consecutive angular-momentum branches and
identified the branch that realizes the ground state~\cite{MR4947381} (see
Section~\ref{subsec:branches} for more details). They formulated three
conjectures: global monotonicity of $b\mapsto\lambda(b)$, strict increase of the
normalized energies at the successive crossings, and the bound
$\lambda(b)<\Theta_0b$ for every $b>0$. All three are proved below. The branches
and their crossings had previously been studied by the physicist Saint-James~\cite{SaintJames}.
We recomputed his graphs with the help of Mathematica, as
shown in Figure~\ref{fig:eigenvalue}.

\begin{figure}[htb]
  \centering
  \includegraphics[width=.75\textwidth,page=1]{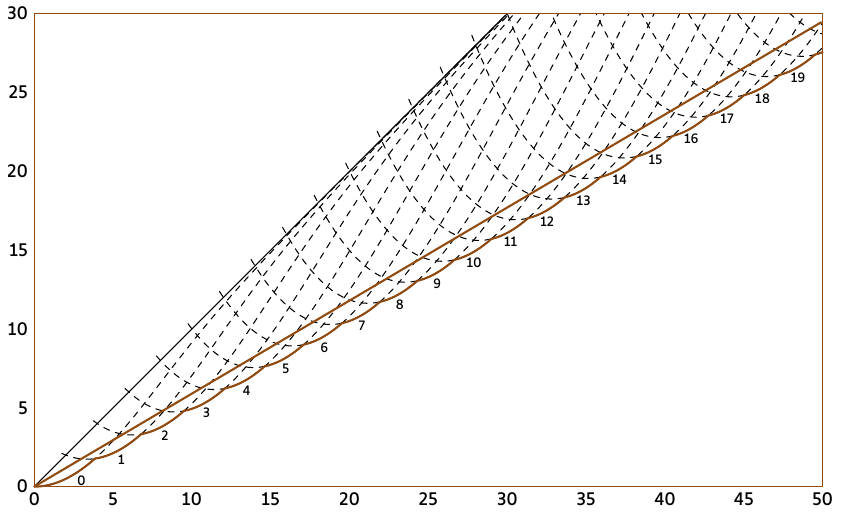}
  \caption{
  The eigenvalue curves \(b \mapsto \lambda _ n (b)\) for \(0\leq n\leq 19\). The
  smallest eigenvalue \(\lambda(b)\) and the line \(b \mapsto \Theta _ 0 b\) are
  drawn with thicker lines.}
  \label{fig:eigenvalue}
\end{figure}

\subsection{Angular-momentum branches}\label{subsec:branches}

Since the magnetic potential is tangential on $\partial\D$, the magnetic Neumann
condition reduces in polar coordinates to the ordinary radial Neumann condition.
We can write the orthogonal decomposition as follows:
\begin{equation}
	\label{eq:decomposition}
	L^2(\D)=\bigoplus_{n\in \Z}\mathcal H_n,
\end{equation}
where, using the polar coordinates $(r,\theta)$ in $\D$,
\begin{equation*}
	\mathcal H_n=\left\{ f(r)e^{-in\theta}/\sqrt{2\pi}\colon f \in L^2((0,1),r\dd r)\right\}.
\end{equation*}
This is simply a reformulation of the Fourier decomposition with respect to the
variable $\theta$. The integer parameter $n$ can be interpreted as the angular
momentum. We accordingly call the space $\mathcal H_n$ the \emph{$n$-th angular
momentum sector}. One can check that the subspace $\mathcal H_n\cap \mathcal
D(H_b)$ is stable under $H_b$. Using the obvious identification of $\mathcal H_n$
with $L^2((0,1),r\dd r)$, the restriction of $H_b$ to $\mathcal H_n$ can be
identified with the operator
\begin{equation}
 H_{n,b}
 =-\frac{d^2}{dr^2}-\frac1r\frac d{dr}
  +\left(\frac nr-\frac b2r\right)^2,
\end{equation}
acting in $L^2((0,1),r\dd r)$, with $f'(1)=0$. From this and from the
decomposition~\eqref{eq:decomposition}, it follows that the spectrum
$\sigma(H_b)$ of $H_b$ can be written
\begin{equation*}
	\sigma(H_b)=\bigcup_{n\in \Z}\sigma(H_{n,b}),
\end{equation*}
where the union takes into account the multiplicities. In particular, if we
denote the lowest eigenvalue of $H_{n,b}$ by $\lambda_n(b)$,
\[
 \lambda(b)=\min_{n\in\Z}\lambda_n(b).
\]
The operator $H_{n,b}$ can also be defined through its quadratic form
\begin{equation}\label{eq:qnb}
 q_{n,b}[f]=\int_0^1\biggl[
   \abs{f'(r)}^2+\left(\frac nr-\frac b2r\right)^2\abs{f(r)}^2
  \biggr]r\dd r,
\end{equation}
whose domain consists of those $f\in L^2((0,1),r\dd r)$ for which the right-hand
side of~\eqref{eq:qnb} is finite; this implies a regularity condition at the origin.
For $n\geq1$ one has $H_{-n,b}=H_{n,b}+2nb$, so a negative angular-momentum $n$
cannot realize the ground-state energy when $b>0$. It is therefore enough to
consider $n\geq 0$, so that
\[
 \lambda(b)=\min_{n\ge0}\lambda_n(b).
\]
The eigenvalue $\lambda_n(b)$ is simple, being the ground-state energy of a
one-dimensional Sturm--Liouville operator; consequently $b\mapsto\lambda_n(b)$ is
real-analytic on $(0,+\infty)$ by analytic perturbation
theory~\cite[Chapter~VII]{MR407617}. We call this function the \emph{$n$-th
angular momentum branch}. It also follows from Sturm--Liouville theory that an
eigenfunction of $H_{n,b}$ associated with $\lambda_n(b)$ does not vanish in
$(0,1)$. For future reference, we denote by $f_{n,b}$ the corresponding positive
eigenfunction, normalized by $\int_0^1 f_{n,b}(r)^2r\dd r = 1$.

We shall use the following two results of Helffer and Léna~\cite{MR4947381}.

\begin{theorem}[{Helffer--Léna~\cite[Theorem~1.2 and Proposition~3.3]{MR4947381}}]\label{thm:HL}
\leavevmode
\begin{enumerate}
 \item[\textup{(HL1)}]
  There is a strictly increasing sequence $(\beta_n)_{n\geq0}$, with
  $\beta_n>2(n+1)$ for every $n$, such that the branches $\lambda_n$ and
  $\lambda_{n+1}$ meet exactly once, at $b=\beta_n$. Moreover
  $\lambda(b)=\lambda_0(b)$ for $0\leq b\leq\beta_0$, and
  \[
   \lambda(b)=\lambda_n(b)
   \quad\text{for}\quad
   \beta_{n-1}\leq b\leq\beta_n, \quad n\geq1.
  \]
 \item[\textup{(HL2)}]
  The function $b\mapsto\lambda_0(b)/b$ is increasing on $(0,+\infty)$, and for
  every $n\geq1$ the function $b\mapsto\lambda_n(b)/b$ decreases to a unique
  minimum and then increases.
\end{enumerate}
\end{theorem}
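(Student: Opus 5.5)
This theorem is quoted from Helffer--Léna, so the plan is to reconstruct their argument in outline. Everything rests on the Feynman--Hellmann formula for the simple eigenvalue $\lambda_n(b)$. With $f=f_{n,b}$ normalized as above,
\[
 \lambda_n'(b)=-\int_0^1 r\Bigl(\frac nr-\frac b2 r\Bigr)f^2\,r\dd r
 =\int_0^1\Bigl(\frac b2 r^2-n\Bigr)f^2\,r\dd r .
\]
Differentiating the difference of two branches gives
\[
 \frac{d}{db}(\lambda_{n+1}-\lambda_n)=\int_0^1 \Bigl(\frac b2r^2\Bigr)\bigl(f_{n+1,b}^2-f_{n,b}^2\bigr)r\dd r-1 .
\]
Note also that $\lambda_{n+1}-\lambda_n = \frac{(2n+1)}{r^2}$-type potential differences minus $b$. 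More precisely, the potentials differ by $\frac{2n+1}{r^2}-b$. This difference is positive for small $b$ and favours larger $n$ as $b$ grows.

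For (HL1) I would proceed in three steps.

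First, at $b=0$ we have $\lambda_0(0)=0<\lambda_{n+1}(0)$. Comparing with constant-type trial functions shows $\lambda_{n+1}(b)-\lambda_n(b)>0$ for $b\le 2(n+1)$. In this range $(\frac{n+1}r-\frac b2r)^2\ge(\frac nr-\frac b2 r)^2$ pointwise on $(0,1)$, because the difference equals $\frac{2n+1}{r^2}-b\ge 2n+1-b>0$. By the min-max principle, $\lambda_{n+1}>\lambda_n$ there, which already gives $\beta_n>2(n+1)$.

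Second, I would use large-$b$ asymptotics (boundary localization near $r=1$ with $\xi$-dependence through $n/b$). These show that for fixed $n$, eventually $\lambda_{n+1}(b)<\lambda_n(b)$, so a crossing exists.

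Third, for uniqueness I would show that at any crossing point the derivative of $\lambda_{n+1}-\lambda_n$ is negative. At a crossing the two eigenfunctions share the same energy. Subtracting the two equations tested against each other, via a Wronskian identity on $(0,1)$, one should be able to show that $\int r^2 f_{n+1}^2 > \int r^2 f_n^2$. The reason is that higher angular momentum pushes mass toward the boundary: $f_{n+1}/f_n$ is increasing by a Sturm comparison argument. This monotonicity of the ratio is the \emph{main obstacle}. I would try to prove it by writing $w=f_{n+1}/f_n$ and deriving a first-order equation for $r f_n^2 w'$ whose right side has a sign on the region $r^2<(2n+1)/b$. That leaves the complementary region to handle through the equal-energy condition, and I am least sure this closes cleanly.

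Given transversal single crossings, the ordering $\beta_n<\beta_{n+1}$ and the description $\lambda=\lambda_n$ on $[\beta_{n-1},\beta_n]$ follow by a continuity/induction argument. One needs to exclude that $\lambda_{n+2}$ dips below $\lambda_n$ before $\lambda_{n+1}$ does. This follows from chaining the pairwise crossings: $\lambda_{n+2}>\lambda_{n+1}$ for $b<\beta_{n+1}$, and one must check $\beta_{n+1}>\beta_n$, again via the sign of the derivative of the differences.

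For (HL2) I would use a scaling form. By the Feynman--Hellmann formula, $(\lambda_n/b)'=b^{-2}(b\lambda_n'-\lambda_n)$. Here $b\lambda_n'-\lambda_n=-\int (f'^2+(n^2/r^2-b^2r^2/4)f^2)r\dd r$, which is a Virial-type expression. For $n=0$ this is $-\int f'^2+\frac{b^2}{4}\int r^2f^2-\dots$, and I would aim to show it is positive using the equation tested against $rf'$ (a Pohozaev identity producing boundary terms at $r=1$ with a sign). For $n\ge1$, the $n^2/r^2$ term makes the expression negative for small $b$. Positivity for large $b$ follows from asymptotics. A unique sign change would then come from showing that the expression is increasing in $b$ wherever it vanishes, again by Feynman--Hellmann differentiation.
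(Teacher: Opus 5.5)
The paper only quotes this theorem from Helffer--L\'ena, so your outline has to stand on its own. It has a genuine gap at its central step. By your own formula, transversality at a crossing means
\[
 \frac b2\Bigl(\int_0^1 r^2f_{n+1,b}^2\,r\dd r-\int_0^1 r^2f_{n,b}^2\,r\dd r\Bigr)<1 ,
\]
that is, a quantitative \emph{upper} bound $2/b$ on the difference of second moments. The inequality you plan to prove, that $f_{n+1,b}$ carries more mass near $r=1$ (via monotonicity of $f_{n+1,b}/f_{n,b}$), is a \emph{lower} bound on that difference. It says nothing about the sign of $\lambda_{n+1}'-\lambda_n'$, so uniqueness would not follow even if your Sturm comparison closed.

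The other steps also fail as written:
\begin{enumerate}
\item[(i)] In Step~1, $\frac{2n+1}{r^2}-b$ is nonnegative on $(0,1)$ only for $b\leq 2n+1$. Pointwise comparison therefore gives $\beta_n>2n+1$, not $\beta_n>2(n+1)$.
\item[(ii)] In Step~2, for fixed $n$ and $b\to+\infty$ the ground states in sectors $n$ and $n+1$ concentrate near the origin, not at the boundary. Both $b-\lambda_n(b)$ and $b-\lambda_{n+1}(b)$ are positive and exponentially small. Boundary-localization asymptotics have only polynomially small errors, so they cannot decide the sign of $\lambda_{n+1}-\lambda_n$.
\item[(iii)] Transversality of each pairwise crossing says nothing about the relative order of $\beta_n$ and $\beta_{n+1}$.
\item[(iv)] For (HL2), showing that $b\lambda_n'-\lambda_n$ increases at its zeros amounts to $\lambda_n''>0$ there, and second-order perturbation theory gives no evident sign.
\end{enumerate}

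The missing idea is the Pohozaev identity you invoke only for $n=0$. For every $k$ it is Lemma~\ref{lem:derivative-identity}:
\[
 b\lambda_k'-\lambda_k=-\tfrac12D_k(b)f_{k,b}(1)^2,
 \qquad
 D_k(b)=\lambda_k(b)-(b/2-k)^2 .
\]
At a crossing $b=2x$ of $\lambda_n$ and $\lambda_{n+1}$, the Saint-James relation $\lambda=(x-n)(x-n-1)$ of Lemma~\ref{lem:intertwining} first upgrades your bound to $x>n+1$, since $\lambda>0$ and $x>n+\frac12$. It then gives $D_n=-(x-n)$ and $D_{n+1}=x-n-1$, hence
\[
 \lambda_{n+1}'(b)-\lambda_n'(b)
 =-\frac{(x-n-1)f_{n+1,b}(1)^2+(x-n)f_{n,b}(1)^2}{2b}<0 ,
\]
so $\lambda_{n+1}-\lambda_n$ has at most one zero.

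Each $D_k$ is also nonincreasing in $b$ by min--max, exactly as in Section~\ref{subsec:positivity}. Since $D_{n+1}(\beta_n)=\beta_n/2-n-1>0$ and $D_{n+1}(\beta_{n+1})=-(\beta_{n+1}/2-n-1)<0$, this forces $\beta_{n+1}>\beta_n$. Your chaining argument then gives the description of $\lambda$.

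The same two facts give (HL2). For $k=0$, the constant trial state gives $D_0\leq-b^2/8<0$, so $\lambda_0/b$ increases. For $k\geq1$, $D_k$ is real-analytic and nonincreasing, positive at $b=2k$ by Lemma~\ref{lem:apriori}, and tends to $-\infty$ because $\lambda_k(b)<b$. So it changes sign exactly once, from positive to negative, which is (HL2).

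None of these tools depends on (HL1). Existence of the crossing still needs a separate input. One option is exponentially precise asymptotics of $b-\lambda_k(b)$. Another is a sign change of $\mathcal R_n$ on $(0,1)$: $\mathcal R_n(1)<0$, and its zeros give crossings by Lemma~\ref{lem:intertwining}.
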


Thus, for any $b\ge0$ not equal to one of the values $\beta_n$, we can define the
\emph{active angular momentum} $n(b)$ by the relation
$\lambda(b)=\lambda_{n(b)}(b)$. We call the intervals $(0,\beta_0)$ and
$(\beta_{n-1},\beta_{n})$, for $n\ge1$, \emph{active angular-momentum intervals}.

At the crossing of the branches $n$ and $n+1$, one has the Saint-James
identity~\cite{SaintJames} (a complete proof is given
in~\cite[Theorem~1.1]{MR4947381})
\begin{equation}\label{eq:saint-james}
 \bigl(\beta_n-(2n+1)\bigr)^2=4\lambda(\beta_n)+1 .
\end{equation}
This identity is elementary, and we rederive it in
Lemma~\ref{lem:intertwining} below.

\begin{figure}[tb]
  \centering
  \includegraphics[width=.75\textwidth,page=2]{LS-graph.pdf}
  \caption{The normalized ground-state energy \(b\mapsto\lambda(b)/b\). The dots
  are the normalized crossing energies \(\eta_n^*\) of~\eqref{eq:eta-star-def};
  by Theorem~\ref{thm:crossing-quotients} they increase,
  and~\eqref{eq:de-gennes-limit} they converge to \(\Theta_0\), the horizontal line.
  Theorem~\ref{thm:de-gennes-bound} states that the curve stays strictly below
  that line.}
  \label{fig:quotient}
\end{figure}

\subsection{Main results}

Our first result extends strong diamagnetism in the disk, previously
known for large fields~\cite{MR2394546}, to every positive field
strength.

\begin{theorem}\label{thm:main}
  For every $0<b_1<b_2$, one has $\lambda(b_1)<\lambda(b_2)$.
\end{theorem}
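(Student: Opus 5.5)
By \textup{(HL1)}, on each closed active interval $[\beta_{n-1},\beta_n]$ (with $\beta_{-1}:=0$) one has $\lambda=\lambda_n$. Moreover $\lambda$ is continuous, being a minimum of finitely many real-analytic branches locally. So it suffices to prove that each branch $\lambda_n$ is strictly increasing on its active interval. Concatenating these intervals then gives strict monotonicity of $\lambda$ on $(0,+\infty)$. For $n=0$ this is immediate from \textup{(HL2)}: $\lambda_0(b)=b\cdot(\lambda_0(b)/b)$ is a product of a positive increasing function and a nondecreasing positive function. The same argument works for $n\ge1$ on the part of the active interval lying to the right of the minimum of $\lambda_n(b)/b$. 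The remaining task is the part where $\lambda_n/b$ is still decreasing.

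For that part I would compute $\lambda_n'(b)$ in two independent ways. First, Feynman--Hellmann applied to \eqref{eq:qnb} with $f=f_{n,b}$ gives
\[
 \lambda_n'(b)=\int_0^1\Bigl(\frac b2 r^2-n\Bigr)f_{n,b}(r)^2\,r\dd r .
\]
Second, I would use a dilation (Pohozaev) identity. Multiply the radial equation $-(rf')'+V_{n,b}rf=\lambda_n rf$, where $V_{n,b}=(n/r-br/2)^2$, by $r^2f'$ and integrate. Equivalently, use that the disk of radius $R$ with field $b$ is unitarily equivalent to $R^{-2}$ times the unit disk with field $bR^2$, and differentiate in $R$ using the Neumann Hadamard formula. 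This yields an identity of the form
\[
 2b\lambda_n'(b)-2\lambda_n(b)=\bigl(V_{n,b}(1)-\lambda_n(b)\bigr)f_{n,b}(1)^2,
\qquad V_{n,b}(1)=\Bigl(\frac b2-n\Bigr)^2 ,
\]
up to a sign and normalization that I would fix carefully. Hence $\lambda_n'(b)>0$ is equivalent to
\[
 2\lambda_n+\bigl((b/2-n)^2-\lambda_n\bigr)f_{n,b}(1)^2>0 .
\]
The bound $\beta_{n-1}>2n$ from \textup{(HL1)} gives $b/2-n>0$ throughout the active interval.

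The main obstacle is to show that this quantity, or the Feynman--Hellmann expression, is positive on $[\beta_{n-1},\beta_n]$. It is automatic whenever $\lambda_n(b)\le (b/2-n)^2$. So I would first try to prove the inequality $\lambda_n(b)<(b/2-n)^2$ on the active interval. I would use the Saint-James identity \eqref{eq:saint-james} at both endpoints: it gives $\lambda(\beta_{n-1})=\tfrac14\bigl((\beta_{n-1}-2n+1)^2-1\bigr)$ and $\lambda(\beta_n)=\tfrac14\bigl((\beta_n-2n-1)^2-1\bigr)$. The second equals exactly $(\beta_n/2-n)^2-(\beta_n/2-n)$ after expansion, which is strictly less than $(\beta_n/2-n)^2$. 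The first also sits below $(b/2-n)^2$ at $b=\beta_{n-1}$.

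To propagate this comparison across the interval, I would use a Sturm comparison or trial-function argument. The idea is that the potential well of $V_{n,b}$ has its minimum at $r_*=\sqrt{2n/b}<1$, so the Neumann ground state lies strictly below the value of the potential at the boundary. Where this direct comparison fails, the fallback is to combine the two expressions for $\lambda_n'$ so as to eliminate $f_{n,b}(1)^2$, and then use $\int_0^1 r^2f^2\,r\dd r$ together with the eigenvalue equation to conclude positivity.
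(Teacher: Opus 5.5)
Your reduction to the branches, your treatment of $n=0$ and of the part of each active interval to the right of the minimum of $\lambda_n/b$ via (HL2), and your dilation identity $2b\lambda_n'-2\lambda_n=\bigl((b/2-n)^2-\lambda_n\bigr)f_{n,b}(1)^2$ are all correct. The identity is exactly the paper's derivative identity. The gap is in the remaining part, and it defeats the plan as stated. Your own identity gives
\[
 \frac{d}{db}\,\frac{\lambda_n(b)}{b}=\frac{f_{n,b}(1)^2}{2b^2}\Bigl(\bigl(\tfrac b2-n\bigr)^2-\lambda_n(b)\Bigr),
\]
and $f_{n,b}(1)\neq0$. So the part of the active interval where $\lambda_n/b$ decreases is \emph{exactly} the set where $\lambda_n(b)>(b/2-n)^2$. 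The inequality you propose to prove there is false on the very set where you need it.

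That set is not empty. With $d_n=\beta_{n-1}/2-n$, the Saint-James identity at the lower crossing gives $\lambda_n(\beta_{n-1})=d_n(d_n+1)=(\beta_{n-1}/2-n)^2+d_n$. Your statement that the lower crossing energy ``sits below $(b/2-n)^2$'' is therefore an algebra slip. The heuristic that the ground state lies below the boundary value of the potential, because the well's minimum is at $r_*<1$, fails for the same reason: it ignores the kinetic energy. The fallback of eliminating $f_{n,b}(1)^2$ just returns you to the Feynman--Hellmann formula. That means proving $\int_0^1 r^2f_{n,b}^2\,r\dd r>2n/b$, which is the whole problem restated.

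What is missing is a quantitative estimate. In the regime $D:=\lambda_n(b)-(b/2-n)^2>0$, you must show that $D\,f_{n,b}(1)^2$ is still smaller than $2\lambda_n(b)$. The paper gets this from three ingredients.
\begin{enumerate}[(i)]
 \item The map $b\mapsto\lambda_n(b)-(b/2-n)^2$ is nonincreasing, by min--max, since $\partial_b[(n/r-br/2)^2-(b/2-n)^2]=\frac b2(r^2-1)\le0$. Hence $D\le d_n\le d:=b/2-n$ for $b\ge\beta_{n-1}$.
 \item The trace bound $f_{n,b}(1)^2<2/\J_n$ holds, with $\J_n=\int_0^1t^ne^{n(1-t)}\dd t$. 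It comes from Jensen's inequality in the Euler integral representation of Kummer's function, where the Neumann condition fixes the first moment of the weight.
 \item The inequality $(d_n+1)\J_n>1$ holds. It combines the crossing bound $2n<3d_n^2+d_n$ with a $3\times3$ Hankel-determinant lower bound for $\J_n$. The crossing bound comes from the two simultaneous Neumann conditions at $\beta_{n-1}$, which make a quadratic orthogonal to $1$ and $t$ for the Euler weight.
\end{enumerate}
Then $2b\lambda_n'>2\lambda_n-2D(d+1)=2d(d-D)\ge0$. As the paper remarks, (ii) and (iii) are matched to leading order in $n$, and the final inequality holds only through lower-order terms. A soft comparison or trial-function argument of the kind you sketch therefore cannot be expected to close this step.
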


To the best of our knowledge, the disk is the first bounded planar domain for
which strong diamagnetism has been established at every field strength.

At the crossings $\beta_n$ of Theorem~\ref{thm:HL} we define the
normalized crossing energies
\begin{equation}\label{eq:eta-star-def}
 \eta_n^*=\frac{\lambda(\beta_n)}{\beta_n}, \qquad n\geq0.
\end{equation}

\begin{theorem}\label{thm:crossing-quotients}
  The sequence $(\eta_n^*)_{n\geq0}$ is strictly increasing.
\end{theorem}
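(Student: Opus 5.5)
We need $\eta_n^*<\eta_{n+1}^*$, i.e. $\lambda(\beta_n)/\beta_n<\lambda(\beta_{n+1})/\beta_{n+1}$. On $[\beta_n,\beta_{n+1}]$ the ground state is $\lambda_{n+1}$ by (HL1). Both endpoints lie on the single branch $\lambda_{n+1}$, so the claim reduces to comparing the values of $g(b)=\lambda_{n+1}(b)/b$ at $b=\beta_n$ and $b=\beta_{n+1}$. By (HL2), $g$ is unimodal: it decreases to a unique minimum at some $b^*_{n+1}$ and then increases. If $\beta_n\ge b^*_{n+1}$, the inequality follows at once, since $g$ is strictly increasing on $[\beta_n,\beta_{n+1}]$ (strictness from analyticity and the uniqueness of the minimum). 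The work is therefore to exclude, or otherwise handle, the case where the crossing $\beta_n$ lies in the decreasing part of $g$.

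I would use the Saint-James identity \eqref{eq:saint-james} to rewrite the crossing energies explicitly:
\[
\eta_n^*=\frac{(\beta_n-2n-1)^2-1}{4\beta_n}=\frac{(\beta_n-2n-2)(\beta_n-2n)}{4\beta_n}.
\]
Thus $\eta_n^*=\Phi_n(\beta_n)$ with $\Phi_n(x)=\frac{(x-2n-2)(x-2n)}{4x}$, which is increasing in $x$ for $x>0$ and decreasing in $n$. The inequality $\eta_n^*<\eta_{n+1}^*$ becomes $\Phi_n(\beta_n)<\Phi_{n+1}(\beta_{n+1})$. This holds as soon as the gap $\beta_{n+1}-\beta_n$ is large enough to compensate the shift $n\mapsto n+1$. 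Writing $\beta_{n+1}=\beta_n+2+\delta_n$, the condition reduces to an explicit algebraic inequality in $(\beta_n,\delta_n)$.

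My plan is to combine the two viewpoints. Suppose, for contradiction, that $\eta_{n+1}^*\le\eta_n^*$. The unimodality of $g$ then forces $g$ to be decreasing at $\beta_n$, so $\lambda_{n+1}'(\beta_n)\le\lambda_{n+1}(\beta_n)/\beta_n$. I would compute $\lambda_{n+1}'$ by Feynman--Hellmann:
\[
\lambda_{n+1}'(b)=\int_0^1\Bigl(\frac b2 r^2-(n+1)\Bigr)r\,f_{n+1,b}^2\,r\,dr,
\]
and at the crossing compare it with $\lambda_n'(\beta_n)$. The intertwining relation used in Lemma~\ref{lem:intertwining} relates $f_{n,\beta_n}$ and $f_{n+1,\beta_n}$ through a first-order ladder operator, and this should give a relation between $\lambda_n'$ and $\lambda_{n+1}'$ at $\beta_n$. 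The crossing ordering (from (HL1), $\lambda_{n+1}<\lambda_n$ just after $\beta_n$) gives $\lambda_{n+1}'(\beta_n)\le\lambda_n'(\beta_n)$. Together with (HL2) applied to $\lambda_n/b$, which must be increasing at $\beta_n$ because the minimum of branch $n$ lies before its exit crossing, this should show $g'(\beta_n)>0$. That contradicts the assumption, and by unimodality it yields $g$ increasing on $[\beta_n,\beta_{n+1}]$.

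The main obstacle is proving that the minimum of $\lambda_{n+1}(b)/b$ occurs before $\beta_n$, equivalently that $g'(\beta_n)>0$. (HL2) alone does not give this. I expect to need a quantitative inequality at the crossing, extracted from the explicit intertwining together with the bound $\beta_n>2(n+1)$. If this route fails, I would fall back on the explicit formula $\Phi_n$ and try to prove a lower bound $\beta_{n+1}-\beta_n>2+\delta(\beta_n)$ directly from the Saint-James identity at two consecutive crossings.
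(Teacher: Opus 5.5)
\paragraph{The key step is false.} Your plan rests on proving $g'(\beta_n)>0$ for $g=\lambda_{n+1}/b$, but this fails for every $n$: the case you set out to exclude is the only one that occurs. Dividing the derivative identity of Lemma~\ref{lem:derivative-identity} by $b$ gives
\[
 \Bigl(\frac{\lambda_m}{b}\Bigr)'(b)=-\frac{\lambda_m(b)-(b/2-m)^2}{2b^2}\,f_{m,b}(1)^2 ,
\]
so $\lambda_m/b$ increases or decreases according to the sign of $(b/2-m)^2-\lambda_m(b)$. At $b=\beta_n=2x$ the Saint-James identity reads $\lambda(\beta_n)=(x-n)(x-n-1)$. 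For $m=n+1$ this gives $\lambda(\beta_n)-(x-n-1)^2=x-n-1>0$, because $\beta_n>2(n+1)$. Hence $g'(\beta_n)<0$, while for $m=n$ one gets $(\lambda_n/b)'(\beta_n)>0$.

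So every crossing is a cusp-shaped local maximum of $\lambda(b)/b$. On $[\beta_n,\beta_{n+1}]$ the function $g$ first dips below $\eta_n^*$, and the theorem asserts that its later rise overshoots that value. Information at the single point $\beta_n$ plus unimodality cannot decide this. Your proof by contradiction cannot close, since its hypothesis only yields ``$g$ decreasing at $\beta_n$'', which holds unconditionally. Separately, your derivative comparison points the wrong way. Because branch $n+1$ passes below branch $n$, the inequality $\lambda_{n+1}'(\beta_n)\le\lambda_n'(\beta_n)$ is an upper bound on $\lambda_{n+1}'(\beta_n)$. Combined with $\lambda_n'(\beta_n)>\lambda_n(\beta_n)/\beta_n$, it yields no lower bound at all.

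\paragraph{The fallback only restates the theorem.} $\Phi_{n+1}$ is increasing for $y>2\sqrt{(n+1)(n+2)}$, and $\beta_{n+1}$ lies in that range. So $\eta_n^*<\eta_{n+1}^*$ is equivalent to $\beta_{n+1}$ exceeding the larger root of $\Phi_{n+1}(y)=\eta_n^*$, that is, the field at which the next crossing would occur if the normalized energy stayed equal to $\eta_n^*$. The two Saint-James identities merely express $\eta_n^*$ and $\eta_{n+1}^*$ through $\beta_n$ and $\beta_{n+1}$; they carry no information linking the two crossings. That link must come from the Neumann eigenvalue problem itself.

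The paper supplies it as follows, with indices shifted down by one.
\begin{enumerate}[(i)]
 \item The crossings of $\lambda_n$ and $\lambda_{n+1}$ are the zeros in $(0,1)$ of an explicit function $\mathcal R_n(s)$ of the normalized energy alone, whose unique zero is $\eta_n^*$, and $\mathcal R_n(1)<0$.
 \item Freezing the normalized energy at $\eta=\eta_{n-1}^*$ freezes the Kummer parameter $\nu=(1-\eta)/2$. A single function $F=M(\nu,n+1,\cdot)$ then governs both the actual crossing $x=\beta_{n-1}/2$ and the point $\widehat x=z_n(\eta)$, which is the rescaled version of your hypothetical field.
 \item The quantity $Y(z)=2zF'(z)/F(z)-(z-n)$ vanishes at $x$ and satisfies a Riccati equation. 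A convexity estimate on $[x,\widehat x]$, using $1<\widehat x-x<2$ and $\eta+\widehat x-x<2$, gives $\mathcal R_n(\eta)=Y(\widehat x)>0$.
 \item The intermediate value theorem then places $\eta_n^*$ in $(\eta,1)$.
\end{enumerate}

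Two minor slips: your Feynman--Hellmann integrand has a spurious factor $r$, and $\Phi_n$ is increasing only for $x>2\sqrt{n(n+1)}$. The latter is harmless here, since $\beta_n>2(n+1)$.
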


The first terms of the sequence $(\eta_n^*)_{n\geq0}$ are marked in
Figure~\ref{fig:quotient}.

The third result is the global comparison with the half-plane energy.

\begin{theorem}\label{thm:de-gennes-bound}
  For every $b>0$, one has
  \[
    \lambda(b)<\Theta_0b.
  \]
\end{theorem}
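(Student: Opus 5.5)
The plan is to deduce Theorem~\ref{thm:de-gennes-bound} from Theorem~\ref{thm:crossing-quotients}, the limit \eqref{eq:de-gennes-limit}, and the shape information (HL2) in Theorem~\ref{thm:HL}. We do not estimate $\lambda(b)/b$ directly. Instead we show that on every active angular-momentum interval the maximum of $\lambda(b)/b$ is attained at an endpoint, hence at a crossing. Then we show that every crossing value is strictly below $\Theta_0$.

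First, the crossing values. By Theorem~\ref{thm:crossing-quotients}, $(\eta_n^*)$ is strictly increasing. Since $\beta_n\to\infty$ (the sequence is increasing with $\beta_n>2(n+1)$), \eqref{eq:de-gennes-limit} gives $\eta_n^*\to\Theta_0$. A strictly increasing sequence lies strictly below its limit, so $\eta_n^*<\Theta_0$ for every $n$.

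Second, the intervals. On $[\beta_{n-1},\beta_n]$ with $n\ge1$ we have $\lambda=\lambda_n$ by (HL1). By (HL2), $\lambda_n(b)/b$ first decreases to a unique minimum and then increases. Such a function has no interior strict maximum on a compact interval, so
\[
\frac{\lambda(b)}{b}\le\max(\eta_{n-1}^*,\eta_n^*)=\eta_n^*<\Theta_0 .
\]
On $(0,\beta_0]$ we have $\lambda=\lambda_0$, and $\lambda_0(b)/b$ is increasing by (HL2). Hence $\lambda(b)/b\le\eta_0^*<\Theta_0$ there. The intervals cover $(0,\infty)$ because $\beta_n\to\infty$, and this gives the theorem.

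The only subtle point is the word ``increasing'' in (HL2). If it means non-decreasing, or if the decrease–increase is not strict, the endpoint bound still yields a non-strict inequality $\lambda(b)/b\le\eta_n^*$. Strictness of the final statement then comes from the strict inequality $\eta_n^*<\Theta_0$, so no extra work is needed.

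The real obstacle is therefore Theorem~\ref{thm:crossing-quotients} itself, which is taken as given here. If I had to prove it, I would use the Saint-James identity \eqref{eq:saint-james}, which gives
\[
\eta_n^*=\frac{(\beta_n-2n-1)^2-1}{4\beta_n}.
\]
This reduces monotonicity to a quantitative lower bound on the gaps $\beta_{n+1}-\beta_n$ compared with $2$. I would try to obtain that bound by comparing consecutive branches via the intertwining of Lemma~\ref{lem:intertwining}.
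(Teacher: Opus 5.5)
Your argument is correct and is essentially the paper's first proof. Theorem~\ref{thm:crossing-quotients} and \eqref{eq:de-gennes-limit} give $\eta_n^*<\Theta_0$ for every $n$, and (HL2) forces the maximum of $\lambda_n(b)/b$ on each active interval to a crossing endpoint, exactly as you describe.
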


Theorem~\ref{thm:de-gennes-bound} has two conceptually different proofs, and we
give both. The first, in Section~\ref{sec:crossing-monotonicity}, is a
consequence of Theorem~\ref{thm:crossing-quotients}: by (HL2), on each active
angular-momentum interval the quotient $\lambda_n(b)/b$ has no interior maximum,
while its endpoint values are consecutive terms of $(\eta_n^*)_{n\geq0}$; since
$\beta_n\to\infty$, relation~\eqref{eq:de-gennes-limit} shows that this
increasing sequence converges to $\Theta_0$. The second, in
Section~\ref{sec:trial-state-proof}, is variational. The two proofs are not only
different in method but also in logical status: the first uses the strong-field
asymptotics~\eqref{eq:de-gennes-limit}, which is a nontrivial external input,
whereas the second does not.

The variational proof yields, in addition, the following explicit upper bound.
We denote by $\xi_0>0$ the minimizer of $\xi\mapsto\mu(\xi)$
in~\eqref{eq:Theta0}, by $\varphi_0$ the corresponding positive normalized
eigenfunction, and we set $C_1\coloneqq\varphi_0(0)^2/3$. We recall that
$\Theta_0=\xi_0^2$ (see for instance~\cite[Equation~(3.25)]{MR2662319}).

\begin{theorem}\label{thm:quantitative}
Set
\[
 B_1
 = \frac54+\frac53C_1\xi_0+\frac{37}{12}\xi_0^4,
 \qquad
 B_2
 = \frac53\xi_0^2\bigl(C_1+5\xi_0^3\bigr).
\]
Then, for every $b>4\Theta_0$,
\[
 \lambda(b)
 <
 \Theta_0b-C_1\sqrt b
 +\frac{B_1+B_2b^{-1/2}}{1-2\xi_0b^{-1/2}},
\]
and, for every $b\geq130$,
\[
 \lambda(b)<\Theta_0b-C_1\sqrt b+B_1+B_2b^{-1/2}.
\]
\end{theorem}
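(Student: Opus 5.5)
We bound $\lambda(b)$ from above with an explicit trial state localized near the boundary circle, built from the de Gennes ground state $\varphi_0$, and compute $q_b$ and the norm essentially exactly. We work in the sector $\mathcal H_n$, so by the min-max principle $\lambda(b)\le\lambda_n(b)\le q_{n,b}[f]/\int_0^1 f^2 r\dd r$ for any admissible $f$. Set $t=\sqrt b\,(1-r)$ and take
\[
 f(r)=\varphi_0\bigl(\sqrt b(1-r)\bigr)\chi(r),
\]
with the angular momentum $n$ chosen so that $n/r-\tfrac b2 r$ at $r=1$ matches the de Gennes shift; that is, $n\approx \tfrac b2-\xi_0\sqrt b$. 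To avoid a cutoff one can also use $\varphi_0$ itself restricted to $t\in[0,\sqrt b]$. Its exponential decay is then only an error term, so it is cleaner to use $\varphi_0$ exactly and accept the factor $r$ in the measure. The integer rounding of $n$ costs an $O(1)$ error, which must be absorbed into $B_1$.

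\textbf{Step 1: expand the energy.} Write $r=1-t/\sqrt b$, so that $r\dd r=b^{-1/2}(1-t b^{-1/2})\dd t$. Expand the potential
\[
 \Bigl(\tfrac nr-\tfrac b2 r\Bigr)^2=b\,(t-\xi_0)^2+\sqrt b\,P_1(t)+P_2(t)+\dots
\]
as a polynomial in $t$ and $\delta=n-(\tfrac b2-\xi_0\sqrt b)$, using $1/r=\sum (t b^{-1/2})^k$. The leading term gives $\Theta_0 b$ through the eigenvalue equation for $\varphi_0$. The $\sqrt b$ correction collects the curvature terms $-\int(\cdot)\,t\varphi_0^2$ coming from the measure and from $P_1$. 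These are evaluated by the moment identities for $\varphi_0$, namely $\int (t-\xi_0)\varphi_0^2=0$ and the Feynman--Hellmann/virial-type identities, which give $\int(t-\xi_0)^3\varphi_0^2=\varphi_0(0)^2/6$ and similar. Together they should produce exactly $-C_1\sqrt b$ with $C_1=\varphi_0(0)^2/3$, the known second term.

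\textbf{Step 2: control the remainders.} This is the main obstacle. The remainders must be bounded by explicit constants, rather than $O(1)$, using only that $t\ge0$, $\xi_0^2=\Theta_0<1$ and explicit moments of $\varphi_0$. The geometric series in $1/r$ is where the denominator $1-2\xi_0 b^{-1/2}$ should come from: on the relevant region one bounds $1/r$, or the ratio of the normalization $1-\langle t\rangle b^{-1/2}$, by a factor of this type. The condition $b>4\Theta_0$ is exactly $2\xi_0b^{-1/2}<1$. I would first try to bound all higher moments $\int t^k\varphi_0^2$ in terms of $\xi_0$ via the eigenvalue equation. The constants $B_1$, $B_2$, with their powers of $\xi_0$ and $C_1$, suggest exactly such moment bounds. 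The region $t>\sqrt b$, i.e. outside the disk, simply does not appear if we integrate only over $r\in(0,1)$. The truncated Neumann trial function is still admissible, since $\varphi_0'(0)=0$ gives $f'(1)=0$, which is not needed for the form anyway. Boundary terms from integrating by parts at $t=\sqrt b$ have a favorable sign or are dropped, since $\varphi_0\varphi_0'<0$ there.

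\textbf{Step 3: the second inequality.} For $b\ge130$ we have $1-2\xi_0b^{-1/2}$ close to $1$. We show that the difference
\[
 (B_1+B_2b^{-1/2})\Bigl(\frac1{1-2\xi_0b^{-1/2}}-1\Bigr)
\]
is absorbed by the slack we discarded in Step 2. Alternatively, we redo the estimate keeping one more explicitly negative term, which dominates once $b\ge130$; this uses numerical enclosures of $\xi_0$ and $\varphi_0(0)$.
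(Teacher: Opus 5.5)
Your Step~1 bookkeeping is sound: angular momentum near $b/2-\xi_0\sqrt b$, with the moment identities producing $-C_1\sqrt b$. There is, however, a genuine gap in Step~2, which you yourself flag as the main obstacle. It is caused by the coordinate $t=\sqrt b(1-r)$.

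\textbf{The trial state in your coordinate.} Without a cutoff the trial function is not admissible. Indeed $f(0)=\varphi_0(\sqrt b)>0$, and for $n\geq1$ the centrifugal part of $q_{n,b}$ contains $\int_0^1 n^2f(r)^2r^{-1}\dd r=+\infty$. With a cutoff, three new problems appear:
\begin{itemize}
\item the factor $1/r=\sum_k(tb^{-1/2})^k$, which diverges at $t=\sqrt b$, can only be truncated with remainders controlled through the cutoff radius;
\item the cutoff produces $\chi'$ terms;
\item the integrals become truncated moments $\int_0^{\sqrt b}t^k\varphi_0^2\dd t$ instead of the exact ones.
\end{itemize}
Controlling all this explicitly, uniformly in $b>4\Theta_0$, needs tail estimates on $\varphi_0$ that you do not have. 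This includes moderate $b$, where $[0,\sqrt b]$ is short and these errors are not small.

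\textbf{The constants.} More fundamentally, the statement asserts the specific constants $B_1$ and $B_2$. A different trial state gives different $O(1)$ and $O(b^{-1/2})$ coefficients, and nothing in your sketch produces these ones. The denominator shows this already. In your coordinate $r\dd r=b^{-1/2}(1-tb^{-1/2})\dd t$, so the natural norm bound gives $1-\xi_0b^{-1/2}$, not $1-2\xi_0b^{-1/2}$. The factor $2$ does not come from any expansion of $1/r$.

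\textbf{The missing idea: the logarithmic coordinate.} The paper uses $t=\sqrt b\log(1/r)$, with trial state $u=\varphi_0(\sqrt b\log(1/r))e^{-im\theta}/\sqrt{2\pi}$. This maps $(0,1]$ onto $[0,+\infty)$: the origin corresponds to $t=+\infty$, so no cutoff is needed. Since $r\dd r=b^{-1/2}e^{-2t/\sqrt b}\dd t$, the singular terms become exact:
\begin{itemize}
\item the radial kinetic energy is $\sqrt b\int_0^{+\infty}(\varphi_0')^2\dd t=\sqrt b\,\xi_0^2/2$;
\item the centrifugal term is exactly $m^2b^{-1/2}$.
\end{itemize}
The only remaining $b$-dependence is through $e^{-2t/\sqrt b}$ and $e^{-4t/\sqrt b}$. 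For all $t\geq0$ these lie between the Taylor polynomials of degrees $3$ and $4$ of $e^{-\tau}$, the lower one being used on terms with a minus sign. Everything then reduces to the exact moments $T_1,\dots,T_4$ of $\varphi_0^2$, with no remainder to estimate.

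\textbf{Finishing the estimate.} The resulting bound is quadratic in $m$. Rounding its minimizer $m_{\mathrm{opt}}=b/2-\xi_0\sqrt b+O(1)$ to an integer costs at most $\frac14b^{-1/2}$; this is the $\frac14$ in $B_1$. The hypothesis $b\geq4\Theta_0$ guarantees $m_{\mathrm{opt}}>0$. One obtains
\[
q_b[u]-\Theta_0b\norm{u}^2\leq-C_1+B_1b^{-1/2}+B_2b^{-1}.
\]
The denominator then comes only from the norm sandwich $b^{-1/2}(1-2\xi_0b^{-1/2})<\norm{u}^2<b^{-1/2}$, which follows from $e^{-\tau}>1-\tau$ and $T_1=\xi_0$. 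You use the upper bound on the $-C_1$ term and the lower bound on the positive terms.

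\textbf{The second inequality.} Your Step~3 also misses a simple mechanism, and no ``slack'' is needed. Since $b\geq130>b_0$, the whole bound $-C_1+B_1b^{-1/2}+B_2b^{-1}$ is negative. Dividing it by the upper norm bound alone gives the claim.
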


\begin{remark}
The bound is asymptotically sharp to order $\sqrt b$: the strong-field
asymptotics for smooth planar domains give
$\lambda(b)=\Theta_0b-C_1\sqrt b+o(\sqrt b)$ as $b\to+\infty$;
see~\cite[Theorem~8.3.2]{MR2662319}.
\end{remark}

Numerically, $C_1\approx\num{0.2540}$, $B_1\approx\num{2.649}$ and
$B_2\approx\num{2.479}$. To the best of our knowledge,
Theorem~\ref{thm:quantitative} is the first non-asymptotic version of
the two-term upper bound for $\lambda$ in the disk.

Theorem~\ref{thm:main} has a direct consequence for the third critical field in
the disk. In the usual nondimensionalization of Ginzburg--Landau theory, the
linearized normal-state threshold for the external magnetic field $H$ depends on
a parameter $\kappa>0$ and is determined by $\lambda(\kappa H)=\kappa^2$; see for
instance~\cite[Chapter~13]{MR2662319}. Following Fournais and
Helffer~\cite[Section~3]{MR2394546}, one associates with the Ginzburg--Landau
functional two pairs of critical fields: the \emph{global} fields $\underline
H_{C_3}(\kappa)\leq\overline H_{C_3}(\kappa)$, defined through the minimizers of
the functional, and the \emph{local}, or spectral, fields
\begin{equation}\label{eq:local-fields}
  \begin{gathered}
 \underline H_{C_3}^{\mathrm{loc}}(\kappa)
 =\inf\bigl\{H>0\colon\lambda(\kappa H)\geq\kappa^2\bigr\},
 \\
 \overline H_{C_3}^{\mathrm{loc}}(\kappa)
 =\inf\bigl\{H>0\colon\lambda(\kappa H')\geq\kappa^2
   \ \text{for all}\ H'>H\bigr\},
  \end{gathered}
\end{equation}
defined purely in terms of the linear problem. As observed
in~\cite[Section~3]{MR2394546}, the gap within the second pair is caused
precisely by the possible failure of $b\mapsto\lambda(b)$ to be invertible, that
is, by the possible lack of strict monotonicity. Theorem~\ref{thm:main} removes
that obstruction in the disk, at every field strength.

\begin{corollary}\label{cor:critical-field}
For every $\kappa>0$, the equation $\lambda(\kappa H)=\kappa^2$ has exactly one
solution $H>0$. Consequently,
$\underline H_{C_3}^{\mathrm{loc}}(\kappa)=\overline H_{C_3}^{\mathrm{loc}}(\kappa)$
for every $\kappa>0$.
\end{corollary}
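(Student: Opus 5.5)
The corollary follows from Theorem~\ref{thm:main} by an intermediate value argument, after checking the limiting behaviour of $\lambda$ at $0$ and at $+\infty$. Fix $\kappa>0$ and set $g(H)=\lambda(\kappa H)$ for $H>0$.

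\emph{Continuity.} The function $g$ is continuous on $(0,+\infty)$. One way to see this: by (HL1), $\lambda$ coincides on each closed active interval $[\beta_{n-1},\beta_n]$ with the real-analytic branch $\lambda_n$, and these pieces agree at the crossings $\beta_n$. Since $\beta_n\to\infty$, only finitely many pieces meet any compact set, so $\lambda$ is continuous. Alternatively, $b\mapsto\lambda(b)$ is an infimum of quadratic forms depending polynomially on $b$, and these forms are locally uniformly relatively bounded; this also gives local Lipschitz continuity.

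\emph{Behaviour near $0$.} For small $H$, testing the constant function $u=1$ in $q_b$ gives
\[
\lambda(b)\le \frac{b^2}{\pi}\int_{\D}\abs{\A}^2\dd\mathbf{x}=\frac{b^2}{8},
\]
so $g(H)\to0<\kappa^2$ as $H\to0^+$.

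\emph{Behaviour near $\infty$.} For large $H$ we need $\lambda(b)\to+\infty$. This follows from \eqref{eq:de-gennes-limit}, since $\lambda(b)\sim\Theta_0 b$ with $\Theta_0>0$.

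\emph{Existence and uniqueness.} By the intermediate value theorem there is $H>0$ with $g(H)=\kappa^2$. Uniqueness follows because $g$ is strictly increasing by Theorem~\ref{thm:main}.

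\emph{The local critical fields.} Let $H^*$ be the unique solution. Strict monotonicity gives $\lambda(\kappa H)\ge\kappa^2$ if and only if $H\ge H^*$. The set in the definition of $\underline H_{C_3}^{\mathrm{loc}}$ is therefore $[H^*,\infty)$, so its infimum is $H^*$. The condition in $\overline H_{C_3}^{\mathrm{loc}}$, namely $\lambda(\kappa H')\ge\kappa^2$ for all $H'>H$, holds exactly when $H\ge H^*$. Indeed, if $H<H^*$, then any $H'\in(H,H^*)$ violates it. Hence both infima equal $H^*$.

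The only point needing care is the limit at infinity, and \eqref{eq:de-gennes-limit} settles it. There is no real obstacle, since all the substance lies in Theorem~\ref{thm:main}.
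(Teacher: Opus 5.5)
Your proof is correct and follows essentially the same route as the paper: continuity of $\lambda$, the limits at $0$ and $+\infty$ (the latter from \eqref{eq:de-gennes-limit}), the intermediate value theorem and strict monotonicity from Theorem~\ref{thm:main}, which also identify both sets in \eqref{eq:local-fields} with $[H^*,+\infty)$. The only cosmetic difference is that you handle the behaviour near $0$ with the constant-trial-state bound $\lambda(b)\le b^2/8$, where the paper uses $\lambda(0)=0$ and continuity.
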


\begin{proof}
The map $b\mapsto\lambda(b)$ is continuous by standard perturbation theory for
quadratic forms, $\lambda(0)=0$, and $\lambda(b)\to+\infty$. Hence, by
Theorem~\ref{thm:main}, $H\mapsto\lambda(\kappa H)$ is a continuous and strictly
increasing bijection from $(0,+\infty)$ onto itself, and $\lambda(\kappa
H)=\kappa^2$ has a unique solution $H_\kappa$. Strict monotonicity then gives
$\{H>0:\lambda(\kappa H)\geq\kappa^2\}=[H_\kappa,+\infty)$, and likewise
$\{H>0:\lambda(\kappa H')\geq\kappa^2\ \text{for all}\
H'>H\}=[H_\kappa,+\infty)$; both infima in~\eqref{eq:local-fields} are therefore
equal to $H_\kappa$.
\end{proof}

Corollary~\ref{cor:critical-field} is the analogue, for the disk and for every
value of $\kappa$, of~\cite[Proposition~3.2]{MR2394546}, where the same conclusion
is obtained, but only for $\kappa$ larger than some unspecified $\kappa_0 > 0$. We
stress that it concerns the local fields only. Their identification with the
global ones~\cite[Theorem~3.1]{MR2394546} rests on the nonlinear analysis
of~\cite{MR2231969} and is likewise known only for large $\kappa$; that
restriction is of a different nature, and is not lifted by
Theorem~\ref{thm:main}. In the disk, the local critical field
$H_{C_3}^{\mathrm{loc}}(\kappa)$ is thus well defined for every $\kappa>0$,
whereas its identification with $\underline H_{C_3}(\kappa)=\overline
H_{C_3}(\kappa)$ remains restricted to the large-$\kappa$ regime.

The limit $\lambda(b)\to+\infty$ as $b\to+\infty$ used here follows
from~\eqref{eq:de-gennes-limit}, but also, without any asymptotic input, from the
results proved below: by Lemma~\ref{lem:crossing}, with
$d_n=\beta_{n-1}/2-n$, we have $3d_n^2+d_n>2n$, so that $d_n\to+\infty$ and hence
$\lambda(\beta_{n-1})=d_n(d_n+1)\to+\infty$ by~\eqref{eq:crossing-energy}, while
$\beta_n\to+\infty$ by (HL1). Corollary~\ref{cor:critical-field} thus relies only
on Theorem~\ref{thm:HL} and on Theorem~\ref{thm:main}.

Finally, Theorem~\ref{thm:crossing-quotients} localizes the crossing fields
themselves.

\begin{corollary}\label{cor:crossing-fields}
For every $n\geq0$,
\[
 \beta_n
 <
 (2n+1)+2\Theta_0+\sqrt{4\Theta_0(2n+1)+4\Theta_0^2+1},
\]
and, for $n\geq1$,
\[
 \beta_n
 >
 (2n+1)+2\eta_0^*+\sqrt{4\eta_0^*(2n+1)+4(\eta_0^*)^2+1}.
\]
\end{corollary}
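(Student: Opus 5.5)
The plan is to combine the Saint-James identity \eqref{eq:saint-james} with two-sided bounds on the normalized crossing energy $\eta_n^*=\lambda(\beta_n)/\beta_n$, and then solve the resulting quadratic inequality for $\beta_n$.

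First I rewrite \eqref{eq:saint-james} as $(\beta_n-(2n+1))^2=4\eta_n^*\beta_n+1$. Since $\beta_n>2(n+1)>2n+1$ by (HL1), we have $\beta_n-(2n+1)>0$. Setting $x=\beta_n$, $m=2n+1$ and $\eta=\eta_n^*$, the crossing field is therefore the larger root of
\[
 x^2-2(m+2\eta)x+m^2-1=0 .
\]
That root is
\[
 x=m+2\eta+\sqrt{(m+2\eta)^2-m^2+1}=m+2\eta+\sqrt{4\eta m+4\eta^2+1}.
\]
Equivalently, $\beta_n=F(\eta_n^*)$, where $F(\eta)=(2n+1)+2\eta+\sqrt{4\eta(2n+1)+4\eta^2+1}$. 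One could check directly that the smaller root lies below $m$, but this is unnecessary because the positive square root is forced by $x-m>0$.

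Next I observe that $F$ is strictly increasing in $\eta\ge0$, since both $2\eta$ and the radicand are increasing. The two bounds therefore reduce to $\eta_0^*<\eta_n^*<\Theta_0$ for $n\ge1$, together with $\eta_0^*<\Theta_0$ for the upper bound at $n=0$.

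1. The lower bound $\eta_n^*>\eta_0^*$ for $n\ge1$ is immediate from Theorem~\ref{thm:crossing-quotients}.
2. The upper bound $\eta_n^*<\Theta_0$ is Theorem~\ref{thm:de-gennes-bound} applied at $b=\beta_n>0$.

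Alternatively, the upper bound follows from Theorem~\ref{thm:crossing-quotients} together with \eqref{eq:de-gennes-limit}. A strictly increasing sequence that converges to $\Theta_0$ satisfies $\eta_n^*<\eta_{n+1}^*\le\Theta_0$, and this limit is available because $\beta_n\to\infty$.

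There is no real obstacle here; the only care needed is justifying the choice of the larger root via $\beta_n>2n+1$ and using strictness of the inequalities so that both bounds are strict.
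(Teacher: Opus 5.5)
Your proposal is correct and follows the paper's proof essentially verbatim. You rewrite the Saint-James identity as a quadratic in $\beta_n$, identify $\beta_n$ as the larger root, and combine the strict monotonicity of that root in $\eta$ with $\eta_0^*\le\eta_n^*<\Theta_0$ (strict on the left for $n\ge1$) from Theorems~\ref{thm:crossing-quotients} and~\ref{thm:de-gennes-bound}. The only difference is how the larger root is selected: the paper bounds the smaller root by $2n+2\eta<2(n+1)<\beta_n$ using $\eta<1$, while you use only $\beta_n>2n+1$. Your version works because the smaller root satisfies $x-(2n+1)=2\eta-\sqrt{4\eta(2n+1)+4\eta^2+1}<0$, and this one-line check is exactly what your phrase ``forced by $x-m>0$'' relies on, so it is not really ``unnecessary''.
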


\begin{proof}
By~\eqref{eq:saint-james} and~\eqref{eq:eta-star-def}, $\beta_n$ solves
$(\beta-(2n+1))^2=4\eta_n^*\beta+1$, that is
\[
 \beta^2-\bigl(2(2n+1)+4\eta_n^*\bigr)\beta+(2n+1)^2-1=0.
\]
The two roots are
$(2n+1)+2\eta\pm\sqrt{4\eta(2n+1)+4\eta^2+1}$ with $\eta=\eta_n^*$, and the
smaller one is at most $(2n+1)+2\eta-1\leq2n+2\eta<2(n+1)<\beta_n$; hence
$\beta_n$ is the larger root. That root is strictly increasing in $\eta$, and
$\eta_0^*\leq\eta_n^*<\Theta_0$ by Theorems~\ref{thm:crossing-quotients}
and~\ref{thm:de-gennes-bound}, with strict inequality on the left for $n\geq1$.
\end{proof}

Both bounds are of the form $2n+2\sqrt{2\eta n}+O(1)$, with $\eta=\Theta_0$ and
$\eta=\eta_0^*$ respectively; they therefore locate $\beta_n$ with a relative
error $O(n^{-1/2})$.

\begin{remark}\label{rem:beta-asymptotics}
Since $\eta_n^*\to\Theta_0$ by~\eqref{eq:de-gennes-limit}, the exact relation
used in the proof above gives
\[
 \beta_n=2n+2\sqrt{2\Theta_0\,n}+o(\sqrt n),
 \qquad n\to+\infty,
\]
with $2\sqrt{2\Theta_0}\approx\num{2.173}$. This is the heuristic behind the
choice of the starting field $b_{\mathrm{ini}}$ in
Section~\ref{subsec:intermediate}.
\end{remark}

\subsection{Strategy and organization}

There are essentially two ideas behind the proofs of Theorems~\ref{thm:main}
and~\ref{thm:crossing-quotients}.

The first is an exact derivative identity. By the Feynman--Hellmann formula we
can express $\lambda_n'(b)$ in terms of the single number $f_{n,b}(1)^2$;
see~\eqref{eq:derivative-identity}. Monotonicity of the branch is thereby reduced
to an upper bound on the boundary value of its normalized eigenfunction.

The second is the confluent hypergeometric representation of the radial
solutions. In the Euler integral representation of Kummer's function, a Neumann
condition becomes a prescribed \emph{first moment} of an explicit positive
weight on $(0,1)$; two Neumann conditions, holding simultaneously at a crossing,
become two orthogonality relations. Positivity of the corresponding Gram
determinants then produces the inequalities we need. A pleasant feature of this
representation is that its parameter $\nu$ depends only on the normalized energy
$\lambda/b$, and not on the angular momentum; this is what allows us to compare
two different sectors, and two different field strengths, within one family of
special functions.

Section~\ref{sec:preliminaries} collects this representation and some of its corollaries.
Section~\ref{sec:field-monotonicity} proves Theorem~\ref{thm:main}.
Section~\ref{sec:crossing-monotonicity} proves
Theorem~\ref{thm:crossing-quotients} and derives
Theorem~\ref{thm:de-gennes-bound} from it.
Section~\ref{sec:trial-state-proof} gives the second, variational proof of
Theorem~\ref{thm:de-gennes-bound}, and proves
Theorem~\ref{thm:quantitative}. It is independent of
Sections~\ref{sec:field-monotonicity} and~\ref{sec:crossing-monotonicity}, and
of Theorem~\ref{thm:HL}.

The variational proof of Section~\ref{sec:trial-state-proof} was first obtained
in the preprint~\cite{LenaSundqvistBound}. It is reproduced here in full,
including the certification data and code in Appendix~\ref{sec:details}.

\subsection{Further questions}

The disk is the simplest domain for which the questions above can be asked, and
the arguments of
Sections~\ref{sec:preliminaries}--\ref{sec:crossing-monotonicity} rely on its
rotational symmetry through the angular momentum decomposition~\eqref{eq:decomposition}.
Three natural questions remain open. Does global strong
diamagnetism hold for every smooth, bounded, simply connected planar domain, or
at least for every convex one? Does the bound $\lambda(b)<\Theta_0b$ hold for
such domains at every field strength? For large $b$ it follows from the two-term
asymptotics of~\cite{MR2662319}, since the maximal curvature is positive; the
variational scheme of Section~\ref{sec:trial-state-proof} adapts to the
large-field regime, but the treatment of a bounded range of fields uses the
explicit radial trial functions. Finally, one may ask whether the monotonicity of
the normalized crossing energies of Theorem~\ref{thm:crossing-quotients} has an
analogue for the ball in $\R^3$.

\section{The confluent hypergeometric representation}\label{sec:preliminaries}

This section collects the elementary facts about the radial solutions that are
used in Sections~\ref{sec:field-monotonicity}
and~\ref{sec:crossing-monotonicity}. Throughout, $n\in\N_0$ and $b>0$, and we
write
\begin{equation}\label{eq:x-nu}
 x=\frac b2,
 \qquad
 \nu=\frac12\left(1-\frac\lambda b\right)
\end{equation}
for the rescaled field strength and the spectral parameter attached to an energy
$\lambda$. We emphasize that $\nu$ depends only on the normalized energy
$\lambda/b$, and not on the sector index $n$.

\subsection{A priori bounds}

\begin{lemma}\label{lem:apriori}
For every $b>0$ and $n\in\N_0$ one has $\lambda_n(b)>0$. If moreover $b>2n$,
then
\begin{equation}\label{eq:lambda-less-b}
 \lambda_n(b)<b .
\end{equation}
Consequently, if $b>2n$ and $\lambda=\lambda_n(b)$, then $0<\nu<1/2$
in~\eqref{eq:x-nu}.
\end{lemma}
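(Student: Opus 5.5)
Both claims will come from the quadratic form $q_{n,b}$ in~\eqref{eq:qnb} and the min-max characterization $\lambda_n(b)=\inf_{f\neq0} q_{n,b}[f]/\norm{f}^2$, where the norm is that of $L^2((0,1),r\dd r)$.

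\emph{Positivity.} We have $q_{n,b}[f]\geq0$, so $\lambda_n(b)\ge0$. Suppose $\lambda_n(b)=0$. The infimum is attained by $f_{n,b}$, so $q_{n,b}[f_{n,b}]=0$. This forces $f_{n,b}'\equiv0$, so $f_{n,b}$ is constant. It also forces $(n/r-br/2)f_{n,b}\equiv0$ on $(0,1)$. The factor $n/r-br/2$ vanishes at most at one point $r=\sqrt{2n/b}$, so $f_{n,b}\equiv0$, a contradiction. Hence $\lambda_n(b)>0$.

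\emph{Upper bound.} For $b>2n$, I test with the constant function $f\equiv1$, which lies in the form domain only for $n=0$. This gives
\[
\lambda_0(b)\le \frac{\int_0^1 (b^2r^2/4)\, r\dd r}{\int_0^1 r\dd r}=\frac{b^2}{8}.
\]
That bound is only useful for small $b$, so for general $n$ I use $f(r)=r^n$, which is natural and lies in the form domain. Then
\[
|f'|^2 r = n^2 r^{2n-1},\qquad \Bigl(\frac nr-\frac b2 r\Bigr)^2 r^{2n+1}= n^2r^{2n-1}-nb\,r^{2n+1}+\frac{b^2}4 r^{2n+3}.
\]
Integrating, with $\norm{f}^2=1/(2n+2)$, one gets for $n\ge1$
\[
q_{n,b}[r^n]=\frac{n}{2}+\frac n2-\frac{nb}{2n+2}+\frac{b^2}{4(2n+4)}.
\]
The resulting Rayleigh quotient is
\[
2n(n+1)-nb+\frac{b^2(n+1)}{2(n+2)}.
\]
This is quadratic in $b$ and eventually exceeds $b$, so it cannot give $\lambda_n(b)<b$ for all $b>2n$. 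I therefore expect this step to be the main obstacle: a single fixed trial state does not suffice.

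The fix I would try is the Gaussian-weighted function $f(r)=r^ne^{-br^2/4}$, the lowest Landau level state with angular momentum $n$. It satisfies $f'=(n/r-br/2)f$ exactly. Hence $|f'|^2=(n/r-br/2)^2f^2$, and
\[
q_{n,b}[f]=2\int_0^1 f'(r)^2\, r\dd r .
\]
I would compare this with $b\norm{f}^2$. The plan is to integrate by parts with the identity
\[
\Bigl(\frac nr-\frac b2r\Bigr)^2=\frac1r\frac{d}{dr}\Bigl(r\Bigl(\frac nr-\frac b2r\Bigr)\Bigr)+b+\text{(the $H_{n,b}$ relation)}.
\]
Concretely, on the whole plane this $f$ has energy exactly $b$. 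On the disk it gives
\[
q_{n,b}[f]=b\norm{f}^2+\Bigl[\,r f f'\,\Bigr]_0^1=b\norm{f}^2+f(1)f'(1).
\]
Indeed, $\int |f'|^2 r = [rff']-\int f (rf')'$ and $(rf')'=r(f'' +f'/r)$. Since $H$ applied to the Landau state gives $b f$, the boundary term is
\[
f(1)f'(1)=\Bigl(n-\frac b2\Bigr)f(1)^2<0 \quad\text{when } b>2n.
\]
Therefore $\lambda_n(b)<b$. The strictness uses $f(1)\ne0$, and then $\lambda_n(b)\le q_{n,b}[f]/\norm{f}^2<b$. I would double-check the sign and the factor in this integration by parts; this is where any error would lie.

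\emph{Consequence.} With $\lambda=\lambda_n(b)\in(0,b)$ we have $\lambda/b\in(0,1)$. Then~\eqref{eq:x-nu} gives $\nu=\tfrac12(1-\lambda/b)\in(0,1/2)$.
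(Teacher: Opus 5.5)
Your final argument is the paper's own: the same positivity argument (the form vanishing forces $f'=0$ and $(n/r-br/2)f=0$), and the same trial state $r^ne^{-br^2/4}$. This state solves $H_{n,b}f=bf$, so one integration by parts gives $q_{n,b}[f]-b\norm{f}^2=f(1)f'(1)=(n-b/2)e^{-b/2}<0$ for $b>2n$; the sign and factor you flagged for double-checking are correct. The detours through the constant and $r^n$ trial functions are unnecessary, and the $r^n$ quotient should contain $b^2(n+1)/(4(n+2))$ rather than $b^2(n+1)/(2(n+2))$, but neither point affects the proof.
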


\begin{proof}
The operator $H_{n,b}$ is strictly positive for $b>0$: vanishing of its
quadratic form would force both $f'=0$ and $(n/r-br/2)f=0$ almost everywhere,
and hence $f=0$. For the upper bound, let $u(r)=r^ne^{-br^2/4}$. A direct
computation shows that $u$ solves $H_{n,b}u=bu$ on $(0,1)$ and that
$u'(1)=(n-b/2)u(1)$. Integrating by parts,
\[
 q_{n,b}[u]-b\norm{u}^2
 =u(1)u'(1)
 =\left(n-\frac b2\right)e^{-b/2}<0
 \qquad (b>2n),
\]
where the norm is taken in $L^2((0,1),r\dd r)$. The variational
characterization of $\lambda_n(b)$ gives~\eqref{eq:lambda-less-b}. The last
assertion is immediate from~\eqref{eq:x-nu}.
\end{proof}

In the notation of Lemma~\ref{lem:regular-solution} below, the trial state used
here is $u=u_{n,x,0}$: it is the member $\nu=0$, corresponding to $\lambda=b$, of the
family~\eqref{eq:kummer-solution}, since $M(0,n+1,\cdot)\equiv1$. The same
function reappears in~\eqref{eq:R-at-one}.

\subsection{The regular solution and the Neumann condition}

\begin{lemma}\label{lem:regular-solution}
Let $b>0$, $n\in\N_0$ and $\lambda\in\R$, and let $x$ and $\nu$ be as
in~\eqref{eq:x-nu}. Up to a multiplicative constant, the unique solution of
$H_{n,b}f=\lambda f$ on $(0,1)$ belonging to the form domain near the origin is
\begin{equation}\label{eq:kummer-solution}
 u_{n,x,\nu}(r)=r^n e^{-xr^2/2}M(\nu,n+1,xr^2),
\end{equation}
where $M$ denotes Kummer's function. Writing $F=M(\nu,n+1,\cdot)$, the Neumann
condition $u_{n,x,\nu}'(1)=0$ is equivalent to
\begin{equation}\label{eq:kummer-log-derivative}
 \frac{F'(x)}{F(x)}=\frac {x-n}{2x} .
\end{equation}
\end{lemma}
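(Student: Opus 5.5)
The proof is a direct reduction of the radial equation to Kummer's equation by the substitution suggested in~\eqref{eq:kummer-solution}. We write $f(r)=r^n e^{-xr^2/2}F(xr^2)$ with $x=b/2$, set $z=xr^2$, and express $f'$ and $f''$ in terms of $F(z)$, $F'(z)$ and $F''(z)$, using $\frac{d}{dr}=2xr\frac{d}{dz}$. Substituting into $H_{n,b}f=\lambda f$, that is
\[
 -f''-\tfrac1r f'+\bigl(\tfrac{n^2}{r^2}-nb+\tfrac{b^2}{4}r^2\bigr)f=\lambda f,
\]
several terms cancel. The $n^2/r^2$ term cancels against the contribution of $r^n$. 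The $x^2r^2$ term cancels against the Gaussian contribution, since $b^2/4=x^2$. After dividing by $4x\,r^n e^{-z/2}$, what remains is
\[
 zF''+(n+1-z)F'-\nu F=0,
 \qquad \nu=\tfrac12\Bigl(1-\tfrac{\lambda}{b}\Bigr).
\]
The only point needing care in this step is bookkeeping the constant terms: $-nb$ from the potential, $-2x(n+1)$ from the Gaussian–power cross terms, and $\lambda$ combine into $-2b\nu$. This is Kummer's equation with parameters $a=\nu$ and $c=n+1$.

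For uniqueness we use the Frobenius analysis at $r=0$. The radial equation has a regular singular point there with indicial exponents $\pm n$. For $n\ge1$, a second solution behaves like $r^{-n}$, and then $\int (n/r)^2|f|^2 r\dd r$ diverges, so it is not in the form domain. For $n=0$, the second solution behaves like $\log r$, and we exclude it because it fails the regularity built into the form domain near the origin. The cleanest way to see this is that the form domain of the $n=0$ sector corresponds to $H^1$ functions on the disk, whose radial profiles admit the Neumann-type boundary behavior $rf'(r)\to0$, whereas $r\,\frac{d}{dr}\log r=1$. Equivalently, one can work in the Weyl limit-point/limit-circle framework and note that the Friedrichs extension selects the bounded solution. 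This part is the main, though mild, obstacle, since the $\log$ case needs an argument rather than a divergence computation. The regular solution at the origin is therefore $r^nM(\nu,n+1,xr^2)$, times the Gaussian, and $M$ is entire with $M(\nu,n+1,0)=1$.

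Finally, the logarithmic derivative at $r=1$ is
\[
 \frac{u'(1)}{u(1)}=n-x+2x\frac{F'(x)}{F(x)}.
\]
This requires $F(x)\neq0$, so we first note that $u(1)\ne0$ whenever $u'(1)=0$ and $u\not\equiv0$: otherwise the Cauchy data at $r=1$ would both vanish and $u$ would be identically zero. The Neumann condition $u'(1)=0$ then reads $F'(x)/F(x)=(x-n)/(2x)$, which is~\eqref{eq:kummer-log-derivative}.
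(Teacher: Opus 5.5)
This is essentially the paper's proof: the same substitution $z=xr^2$, $f=r^ne^{-z/2}w(z)$ reduces the radial equation to Kummer's equation with parameters $(\nu,n+1)$, you select the regular solution $M(\nu,n+1,\cdot)$, and you take the logarithmic derivative at $r=1$. It is correct, and your observation that $u'(1)=0$ forces $u(1)\neq0$, hence $F(x)\neq0$, makes explicit a point the paper leaves implicit.

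Two small corrections. First, the case $n=0$ needs no separate argument. If $f\sim\log r$, then $\int_0^{\epsilon}\abs{f'}^2r\dd r=\int_0^{\epsilon}r^{-1}\dd r=+\infty$, so the kinetic part of $q_{0,b}$ already excludes it. Your claim that every radial $H^1$ profile satisfies $rf'(r)\to0$ is false in general, since $f'$ is merely square integrable. Second, in your constant-term bookkeeping the signs should read $2x(n+1)-nb-\lambda=b-\lambda=2b\nu$.
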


\begin{proof}
Substituting $z=xr^2$ and $f(r)=r^ne^{-z/2}w(z)$ in
\[
 -f''-\frac1rf'+\left(\frac nr-xr\right)^2f=\lambda f,
\]
dividing by $4x$, and recalling~\eqref{eq:x-nu}, turns the equation into Kummer's
equation
\[
 zw''+(n+1-z)w'-\nu w=0.
\]
The solution regular at the origin is $w=M(\nu,n+1,\cdot)$, the second solution
being singular there (see~\cite[Section~13.2]{DLMF}). This
gives~\eqref{eq:kummer-solution}. Differentiating~\eqref{eq:kummer-solution} at
$r=1$ gives
\begin{equation}\label{eq:log-derivative-at-one}
 \frac{u_{n,x,\nu}'(1)}{u_{n,x,\nu}(1)}
 =n-x+2x\frac{F'(x)}{F(x)},
\end{equation}
which vanishes precisely when~\eqref{eq:kummer-log-derivative} holds.
\end{proof}

\subsection{The Euler representation}

Assume $0<\nu<n+1$ and set $F=M(\nu,n+1,\cdot)$. Then, with a constant
$c_{\nu,n}>0$,
\begin{equation}\label{eq:kummer-integral}
 F(z)
 =c_{\nu,n}\int_0^1e^{zt}t^{\nu-1}(1-t)^{n-\nu}\dd t ;
\end{equation}
see~\cite[Equation~13.4.1]{DLMF}.

\begin{lemma}\label{lem:euler}
With $0<\nu<n+1$ and $F$ as above, let
\[
 w_{n,x,\nu}(t)=
 \frac{e^{xt}t^{\nu-1}(1-t)^{n-\nu}}
 {\int_0^1e^{xs}s^{\nu-1}(1-s)^{n-\nu}\dd s},
 \qquad 0<t<1,
\]
be the associated probability density on $(0,1)$. Then
\begin{equation}\label{eq:euler-consequences}
 \frac{F(z)}{F(x)}=\int_0^1e^{-(x-z)t}w_{n,x,\nu}(t)\dd t,
 \qquad
 \frac{F'(x)}{F(x)}=\int_0^1 t w_{n,x,\nu}(t)\dd t .
\end{equation}
In particular $F>0$ and $F'>0$ on $(0,+\infty)$, and by
Lemma~\ref{lem:regular-solution} the Neumann condition $u_{n,x,\nu}'(1)=0$ holds
if and only if the first moment of $w_{n,x,\nu}$ equals $(x-n)/(2x)$. Finally,
\begin{equation}\label{eq:weight-shift}
 w_{n+1,x,\nu}(t)\ \text{ is proportional to }\ (1-t) w_{n,x,\nu}(t),
\end{equation}
so that the weights for two consecutive values of $n$, at the same field strength and
the same normalized energy, differ by the factor $1-t$.
\end{lemma}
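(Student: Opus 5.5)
Everything follows from the Euler integral \eqref{eq:kummer-integral}; the work is to normalize it at $z=x$ and differentiate under the integral sign. Write $I(z)=\int_0^1e^{zt}t^{\nu-1}(1-t)^{n-\nu}\dd t$, so that $F=c_{\nu,n}I$. The assumption $0<\nu<n+1$ makes both exponents $\nu-1$ and $n-\nu$ strictly greater than $-1$. Hence the integrand is integrable on $(0,1)$ for every real $z$, and $I(x)>0$, so $w_{n,x,\nu}$ is a well-defined probability density.

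For the first identity in \eqref{eq:euler-consequences}, divide $I(z)$ by $I(x)$ and write $e^{zt}=e^{xt}e^{-(x-z)t}$. This gives exactly $\int_0^1e^{-(x-z)t}w_{n,x,\nu}(t)\dd t$.

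For the second identity, differentiate $I$ under the integral sign. This is justified on compact $z$-sets by dominated convergence, since $t\,e^{zt}\le e^{\abs{z}}$ on $(0,1)$. We obtain $I'(x)=\int_0^1 t\,e^{xt}t^{\nu-1}(1-t)^{n-\nu}\dd t$, and dividing by $I(x)$ gives $F'(x)/F(x)=\int_0^1 t\,w_{n,x,\nu}\dd t$.

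Positivity of $F$ and $F'$ is immediate, since the integrands of $I$ and $I'$ are strictly positive on $(0,1)$ and $c_{\nu,n}>0$. This holds for all real $z$, and in particular on $(0,+\infty)$.

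The Neumann statement follows by combining the second identity with \eqref{eq:kummer-log-derivative} in Lemma~\ref{lem:regular-solution}. That lemma applies because $F(x)>0$, so dividing by $F(x)$ is legitimate and the Neumann condition is equivalent to the moment condition.

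For \eqref{eq:weight-shift}, note that replacing $n$ by $n+1$ at fixed $\nu$ and $x$ keeps the condition $0<\nu<n+2$ satisfied. It changes the factor $(1-t)^{n-\nu}$ into $(1-t)^{n+1-\nu}=(1-t)(1-t)^{n-\nu}$. The unnormalized weights therefore differ by the factor $1-t$, and after normalization the two densities are proportional with constant $I_n(x)/I_{n+1}(x)>0$.

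There is no real obstacle here. The only point needing care is the justification of differentiation under the integral near the endpoint singularities. The domination above handles it, because the weight $t^{\nu-1}(1-t)^{n-\nu}$ is integrable and independent of $z$.
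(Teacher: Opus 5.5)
Your proposal is correct and follows the paper's proof: insert the Euler integral \eqref{eq:kummer-integral}, divide by its value at $z=x$ (differentiating under the integral sign to get the first moment), and compare the exponents of $1-t$ for $n$ and $n+1$. You also supply two details the paper leaves implicit: the domination argument for differentiating under the integral, and the remark that $F(x)>0$ is what makes the logarithmic-derivative form of the Neumann condition in Lemma~\ref{lem:regular-solution} legitimate.
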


\begin{proof}
Everything is a direct consequence of~\eqref{eq:kummer-integral}: the two
identities in~\eqref{eq:euler-consequences} follow by inserting the
representation and dividing, and~\eqref{eq:weight-shift} follows by comparing
the exponents of $1-t$ for the parameter pairs $(\nu,n+1)$ and $(\nu,n+2)$.
\end{proof}

\subsection{The intertwining operator and the Saint-James identity}

\begin{lemma}\label{lem:intertwining}
Let $x>0$, $n\in\N_0$ and $0\leq\nu<n+1$, and set
\[
 \mathcal C_{n,x}=\frac d{dr}-\frac nr-xr .
\]
Then
\begin{equation}\label{eq:intertwining}
 \mathcal C_{n,x}u_{n,x,\nu}
 =-\frac{2x(n+1-\nu)}{n+1} u_{n+1,x,\nu} .
\end{equation}
Moreover, let $b=2x$ and let $\lambda=b(1-2\nu)$ be the energy attached to $\nu$
through~\eqref{eq:x-nu}. If $u=u_{n,x,\nu}$ satisfies $u'(1)=0$, then
$v=\mathcal C_{n,x}u$ satisfies
\begin{equation}\label{eq:v-prime}
 v'(1)=\bigl((x-n)(x-n-1)-\lambda\bigr)u(1) .
\end{equation}
Consequently, for $0<\nu<1/2$ the following are equivalent:
\begin{enumerate}[(i)]
 \item $u_{n,x,\nu}'(1)=0$ and $(x-n)(x-n-1)=\lambda$;
 \item the branches $\lambda_n$ and $\lambda_{n+1}$ cross at
  $b=2x$, with common value $\lambda$.
\end{enumerate}
In particular, at a crossing one has $(x-n)(x-n-1)=\lambda$, which is
exactly~\eqref{eq:saint-james}.
\end{lemma}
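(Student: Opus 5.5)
The proof has three parts: the intertwining identity, the boundary derivative of $v$, and the equivalence with a crossing.

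\emph{The identity \eqref{eq:intertwining}.} Write $u=u_{n,x,\nu}=r^ne^{-xr^2/2}F(xr^2)$ with $F=M(\nu,n+1,\cdot)$. Differentiating gives
\[
u'=\frac nr u-xru+2xr\,r^ne^{-xr^2/2}F'(xr^2),
\]
so that
\[
\mathcal C_{n,x}u=r^{n+1}e^{-xr^2/2}\,2x\bigl(F'(z)-F(z)\bigr),\qquad z=xr^2.
\]
The contiguous relation
\[
M'(a,b,z)-M(a,b,z)=-\frac{b-a}{b}\,M(a,b+1,z)
\]
(DLMF~13.3) then gives \eqref{eq:intertwining} at once. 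If one prefers to avoid citing it, the relation can be checked coefficientwise on the power series. Alternatively, it follows from the Euler representation by integrating by parts, which is consistent with \eqref{eq:weight-shift}.

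\emph{The formula \eqref{eq:v-prime}.} The route is a commutation identity. Conjugating by the gauge factor, the operator factorizes as
\[
H_{n,b}=-\mathcal C_{n,x}^{*}\mathcal C_{n,x}+\text{const}.
\]
Direct computation shows that, with $\mathcal D=-\frac d{dr}-\frac{n+1}r-xr$ (the formal adjoint with respect to $r\,dr$),
\[
H_{n,b}=\mathcal D\,\mathcal C_{n,x}+2x(n+1)-\ldots
\]
It is simpler to use the second-order equation directly. Set $v=u'-(n/r+xr)u$. At $r=1$ with $u'(1)=0$ this gives $v(1)=-(n+x)u(1)$. Next,
\[
v'=u''+\frac n{r^2}u-xu-\Bigl(\frac nr+xr\Bigr)u'.
\]
Substitute $u''=-u'/r+\bigl((n/r-xr)^2-\lambda\bigr)u$ and evaluate at $r=1$ with $u'(1)=0$:
\[
v'(1)=\bigl((n-x)^2-\lambda+n-x\bigr)u(1)=\bigl((x-n)(x-n-1)-\lambda\bigr)u(1),
\]
which is \eqref{eq:v-prime}.

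\emph{The equivalence of (i) and (ii).} This part carries the real content. Assume $0<\nu<1/2$. By \eqref{eq:intertwining}, $v$ is a nonzero multiple of $u_{n+1,x,\nu}$, since $n+1-\nu>0$. Moreover $u(1)\neq0$, because $u_{n,x,\nu}(1)=e^{-x/2}F(x)>0$ by Lemma~\ref{lem:euler}.

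(i)$\Rightarrow$(ii): Under (i), $u$ is a Neumann eigenfunction of $H_{n,b}$ and, by \eqref{eq:v-prime}, $u_{n+1,x,\nu}$ is a Neumann eigenfunction of $H_{n+1,b}$, both with eigenvalue $\lambda$. Both functions are positive on $(0,1)$, because $F>0$ on $(0,\infty)$ by Lemma~\ref{lem:euler}. A Sturm--Liouville eigenfunction without interior zeros is the ground state, so $\lambda=\lambda_n(b)=\lambda_{n+1}(b)$.

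(ii)$\Rightarrow$(i): At a crossing with common value $\lambda$, the ground states of the two sectors are $u_{n,x,\nu}$ and $u_{n+1,x,\nu}$ with the same $\nu$, since $\nu$ depends only on $\lambda/b$. Both satisfy the Neumann condition. Then \eqref{eq:v-prime} together with $u(1)\ne0$ forces $(x-n)(x-n-1)=\lambda$.

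\emph{The Saint-James identity.} With $x=\beta_n/2$, the relation $(x-n)(x-n-1)=\lambda$ rewrites as
\[
\bigl(\beta_n-(2n+1)\bigr)^2=4\lambda+1,
\]
which is \eqref{eq:saint-james}.

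\emph{Main obstacle.} The delicate point is keeping the identification of eigenvalue and ground state honest. One has to use positivity of $F$ to know that the eigenfunction is the lowest one, and $\nu<1/2$ ensures, via Lemma~\ref{lem:apriori}, that crossing energies fall in the range where this representation applies.
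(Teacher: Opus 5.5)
Your proof is correct and follows the paper's route. You use the same computation $\mathcal C_{n,x}u=2xr^{n+1}e^{-xr^2/2}(F'-F)$, closed by the contiguous relation; your coefficientwise check even covers $\nu=0$ directly, whereas the paper argues through the Euler integral and continuity. You then evaluate $v'(1)$ from the radial equation in the same way, and use the same positivity and ground-state argument for (i)$\Leftrightarrow$(ii).

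Delete the aborted factorization aside, which is wrong as written. In fact $H_{n,b}=\mathcal C_{n,x}^{*}\mathcal C_{n,x}-2x(2n+1)$, with a plus sign in front of $\mathcal C_{n,x}^{*}\mathcal C_{n,x}$, and no gauge conjugation is involved.
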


\begin{proof}
Writing $F=M(\nu,n+1,\cdot)$, a direct computation gives
\[
 \mathcal C_{n,x}u_{n,x,\nu}(r)
 =2xr^{n+1}e^{-xr^2/2}\bigl[F'(xr^2)-F(xr^2)\bigr],
\]
while~\eqref{eq:kummer-integral} yields, for $0<\nu<n+1$,
\[
 F'(z)-F(z)
 =-c_{\nu,n}\int_0^1e^{zt}t^{\nu-1}(1-t)^{n+1-\nu}\dd t
 =-\frac{n+1-\nu}{n+1}M(\nu,n+2,z);
\]
this is~\cite[Equation~13.3.10]{DLMF}, and it extends to $\nu=0$ by continuity.
This proves~\eqref{eq:intertwining}. Next, the radial equation gives
$u''(1)=-u'(1)+\bigl((n-x)^2-\lambda\bigr)u(1)$, so that, when $u'(1)=0$,
\[
 v'(1)
 =u''(1)-(n+x)u'(1)+(n-x)u(1)
 =\bigl((n-x)^2+(n-x)-\lambda\bigr)u(1),
\]
which is~\eqref{eq:v-prime}.

Assume now $0<\nu<1/2$, so that $M(\nu,n+1,\cdot)$ and $M(\nu,n+2,\cdot)$ are
positive by~\eqref{eq:kummer-integral}.

Suppose (i). Then $u=u_{n,x,\nu}$ is a positive Neumann eigenfunction of
$H_{n,b}$ with eigenvalue $\lambda$, hence the sector-$n$ ground state, and
$\lambda_n(b)=\lambda$. By~\eqref{eq:intertwining}, $v=\mathcal C_{n,x}u$ is a
nonzero regular solution of one sign in sector $n+1$ with the same energy, and
by~\eqref{eq:v-prime} and $(x-n)(x-n-1)=\lambda$ it satisfies $v'(1)=0$. Hence
$v$ is the sector-$(n+1)$ ground state and $\lambda_{n+1}(b)=\lambda$, so the
two branches cross at $b$.

Conversely, suppose (ii). The sector-$n$ ground state is a positive multiple of
$u=u_{n,x,\nu}$ and satisfies $u'(1)=0$; by~\eqref{eq:intertwining} the
sector-$(n+1)$ ground state is a nonzero multiple of $v=\mathcal C_{n,x}u$, and
its Neumann condition together with~\eqref{eq:v-prime} gives
$(x-n)(x-n-1)=\lambda$, which is just~\eqref{eq:saint-james} in disguise.
\end{proof}

\begin{remark}\label{rem:saint-james-consequence}
Let $n\geq1$. By (HL1) we have $\beta_{n-1}>2n$. We may record, for later
use, that at the lower crossing of the $n$-th branch
\begin{equation}\label{eq:crossing-energy}
 \lambda_n(\beta_{n-1})=d_n(d_n+1),
 \qquad\text{where }
 d_n\coloneqq\frac{\beta_{n-1}}2-n>0 .
\end{equation}
Indeed, by Lemma~\ref{lem:intertwining} applied with $n$ replaced by $n-1$ and
$x=\beta_{n-1}/2=n+d_n$, the crossing energy equals
$(x-n+1)(x-n)=d_n(d_n+1)$.
\end{remark}

\section{Strict monotonicity in the field strength}\label{sec:field-monotonicity}

By Theorem~\ref{thm:HL}, the branch $\lambda_n$ realizes the ground-state
energy exactly on $[\beta_{n-1},\beta_n]$, and $\lambda$ is continuous. It is
therefore enough to prove that
\begin{equation}\label{eq:goal-monotonicity}
 \lambda_0'(b)>0\ \ (b>0),
 \qquad
 \lambda_n'(b)>0\ \ (b\geq\beta_{n-1},\ n\geq1).
\end{equation}
For $n=0$ this is immediate: since
$\partial_b(n/r-br/2)^2=br^2/2$ when $n=0$, the Feynman--Hellmann formula
gives
\begin{equation}
 \lambda_0'(b)
 =\frac b2\int_0^1r^2f_{0,b}(r)^2r\dd r>0
 \qquad (b>0).
\end{equation}
We therefore fix $n\geq1$ for the rest of this section.

The proof proceeds in four steps. An exact derivative identity
(Section~\ref{subsec:derivative-identity}) reduces positivity of $\lambda_n'$ to
an upper bound on the boundary value $f_{n,b}(1)^2$. The Euler representation of
Section~\ref{sec:preliminaries} provides such a bound, uniform in $b$, in terms
of an explicit integral $\J_n$ (Section~\ref{subsec:trace}). Two orthogonality
relations available at the lower crossing $\beta_{n-1}$ give two-sided control
of the crossing parameter $d_n$ (Section~\ref{subsec:moments}), and a Hankel
determinant converts that control into the matching lower bound on $\J_n$
(Section~\ref{subsec:Jn}). These estimates fit together in
Section~\ref{subsec:positivity}.

\subsection{An exact derivative identity}\label{subsec:derivative-identity}

\begin{lemma}\label{lem:derivative-identity}
For every $n\geq0$ and $b>0$,
\begin{equation}\label{eq:derivative-identity}
 \lambda_n'(b)
 =\frac{\lambda_n(b)}{b}
  -\frac{\lambda_n(b)-(b/2-n)^2}{2b} f_{n,b}(1)^2.
\end{equation}
\end{lemma}

\begin{proof}
Write $f=f_{n,b}$ and $\lambda=\lambda_n(b)$. Since
$\partial_b(n/r-br/2)^2=br^2/2-n$, the Feynman--Hellmann formula gives
\begin{equation}\label{eq:feynman-hellmann}
 \lambda_n'(b)
 =\int_0^1\left(\frac b2r^2-n\right)f(r)^2r\dd r.
\end{equation}
Write the eigenvalue equation as
\[
 -(rf')'+r\left(\frac nr-\frac b2r\right)^2f=\lambda rf,
\]
multiply it by $rf'$, and integrate over $(0,1)$. The first term contributes
\[
 -\int_0^1(rf')'(rf')\dd r=-\frac12\Bigl[(rf')^2\Bigr]_0^1=0,
\]
since $f'(1)=0$ and $rf'(r)\to0$ as $r\to0$ (because $f(r)=O(r^n)$ near the
origin if $n\geq1$, and because $f'(0)=0$ if $n=0$). For the second term,
integrating by parts and using $[(n-br^2/2)^2]'=-2br(n-br^2/2)$,
\[
 \begin{aligned}
 \int_0^1r^2\left(\frac nr-\frac b2r\right)^2ff'\dd r
 &
 =\frac12\Bigl[\left(n-\frac b2r^2\right)^2f^2\Bigr]_0^1
  +b\int_0^1\left(n-\frac b2r^2\right)f^2r\dd r
 \\
 &
 =\frac12\left(\frac b2-n\right)^2f(1)^2-b\lambda_n'(b),
 \end{aligned}
\]
the last equality by~\eqref{eq:feynman-hellmann}. For the third term, the
normalization of $f$ gives
\[
 \lambda\int_0^1r^2ff'\dd r
 =\frac\lambda2\Bigl[r^2f^2\Bigr]_0^1-\lambda\int_0^1f^2r\dd r
 =\frac\lambda2f(1)^2-\lambda .
\]
(The boundary terms at the origin vanish in both cases: for $n\geq1$ because
$f(r)=O(r^n)$, and for $n=0$ because $r^2f^2$ and $(n-br^2/2)^2$ vanish there.)
The three displays together imply~\eqref{eq:derivative-identity}.
\end{proof}

It follows in particular that if $\lambda_n(b)\leq (b/2-n)^2$, then
\begin{equation}\label{eq:easy-case}
 \lambda_n'(b)\geq\frac{\lambda_n(b)}{b}>0 .
\end{equation}
The whole difficulty therefore lies in the regime
$\lambda_n(b)>(b/2-n)^2$, where the two terms in~\eqref{eq:derivative-identity}
compete.

\subsection{A uniform boundary trace estimate}\label{subsec:trace}

For $n\geq1$, define
\[
 \J_n=\int_0^1 t^n e^{n(1-t)}\dd t.
\]

\begin{lemma}\label{lem:trace}
If $b>2n$, then
\[
 f_{n,b}(1)^2<\frac2{\J_n}.
\]
\end{lemma}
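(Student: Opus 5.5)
The plan is to express $f_{n,b}(1)^2$ through the explicit regular solution of Lemma~\ref{lem:regular-solution}, rewrite $\|u\|^2/u(1)^2$ with the Euler representation of Lemma~\ref{lem:euler}, and bound it from below by Jensen's inequality, using the Neumann condition as the value of the first moment.

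Let $b>2n$, $x=b/2$, and let $\nu$ be attached to $\lambda=\lambda_n(b)$ by~\eqref{eq:x-nu}. By Lemma~\ref{lem:apriori}, $0<\nu<1/2<n+1$, so Lemma~\ref{lem:euler} applies. By Lemma~\ref{lem:regular-solution}, $f_{n,b}$ is a positive multiple of $u=u_{n,x,\nu}$. Therefore
\[
 \frac{1}{f_{n,b}(1)^2}=\frac{\int_0^1 u(r)^2r\dd r}{u(1)^2}.
\]
Write $F=M(\nu,n+1,\cdot)$. From~\eqref{eq:kummer-solution},
\[
 \frac{u(r)}{u(1)}=r^ne^{x(1-r^2)/2}\,\frac{F(xr^2)}{F(x)} .
\]
The substitution $s=r^2$ then gives
\[
 \frac{\int_0^1 u^2r\dd r}{u(1)^2}
 =\frac12\int_0^1 s^n e^{x(1-s)}\Bigl(\frac{F(xs)}{F(x)}\Bigr)^2\dd s .
\]
The claim $f_{n,b}(1)^2<2/\J_n$ is thus equivalent to showing that this last integral, without the factor $\frac12$, is strictly larger than $\J_n$.

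Next use the first identity in~\eqref{eq:euler-consequences}:
\[
 \frac{F(xs)}{F(x)}=\int_0^1 e^{-x(1-s)t}\,w_{n,x,\nu}(t)\dd t .
\]
The function $t\mapsto e^{-x(1-s)t}$ is convex and $w_{n,x,\nu}$ is a probability density, so Jensen's inequality gives
\[
 \frac{F(xs)}{F(x)}\ge e^{-x(1-s)m},
 \qquad m=\int_0^1 t\,w_{n,x,\nu}(t)\dd t .
\]
The Neumann condition $u'(1)=0$ holds because $u$ is an eigenfunction. By Lemma~\ref{lem:euler} it fixes the first moment: $m=(x-n)/(2x)$. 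Hence
\[
 \Bigl(\frac{F(xs)}{F(x)}\Bigr)^2\ge e^{-(1-s)(x-n)} .
\]
Inserting this bound, the exponent becomes $x(1-s)-(x-n)(1-s)=n(1-s)$. The integral is therefore bounded below by $\int_0^1 s^ne^{n(1-s)}\dd s=\J_n$, exactly as required.

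The one step needing care is strictness. For $s\in(0,1)$ the function $t\mapsto e^{-x(1-s)t}$ is strictly convex, since $x(1-s)>0$. The density $w_{n,x,\nu}$ is positive on $(0,1)$, so it is not a point mass. Jensen's inequality is therefore strict for every $s\in(0,1)$. The integrands are continuous, so strict inequality holds after integration. This yields $f_{n,b}(1)^2<2/\J_n$.

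I expect no serious obstacle. The only delicate point is keeping the Gaussian factor $e^{x(1-r^2)/2}$ in the ratio $u(r)/u(1)$: it is exactly what cancels the $x$-dependence and makes the bound uniform in $b$. The assumption $b>2n$ enters only through Lemma~\ref{lem:apriori}, which ensures $0<\nu<n+1$ so that the Euler representation is available.
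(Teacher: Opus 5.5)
Your proof is correct and matches the paper's argument: Jensen's inequality on the Euler representation, with the Neumann condition fixing the first moment at $(x-n)/(2x)$, gives the pointwise bound $f(r)/f(1)>r^n e^{n(1-r^2)/2}$, which is then integrated against the normalization. The only difference is cosmetic: you substitute $s=r^2$ before applying Jensen, whereas the paper applies Jensen in $r$ and substitutes $t=r^2$ at the end.
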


\begin{proof}
Let $x=b/2$ and let $\nu$ be attached to $\lambda=\lambda_n(b)$
by~\eqref{eq:x-nu}; by Lemma~\ref{lem:apriori}, $0<\nu<1/2$, so all the Euler
integrals of Lemma~\ref{lem:euler} converge. By
Lemma~\ref{lem:regular-solution}, $f_{n,b}$ is a positive multiple of
$u_{n,x,\nu}$; we write $f=f_{n,b}$, $F=M(\nu,n+1,\cdot)$ and
$w=w_{n,x,\nu}$.

Let $0<r<1$ and put $y=x(1-r^2)$, so that $x-y=xr^2$. Since $t\mapsto e^{-yt}$
is strictly convex and $w$ is positive on $(0,1)$,
Jensen's inequality and~\eqref{eq:euler-consequences} give
\[
 \frac{F(x-y)}{F(x)}
 =\int_0^1e^{-yt}w(t)\dd t
 >\exp\left(-y\int_0^1tw(t)\dd t\right)
 =\exp\left(-\frac{y(x-n)}{2x}\right),
\]
the last equality being the Neumann
condition~\eqref{eq:kummer-log-derivative}. Hence, by~\eqref{eq:kummer-solution},
\[
 \frac{f(r)}{f(1)}
 =r^n e^{y/2}\frac{F(x-y)}{F(x)}
 >r^n\exp\left(\frac n2(1-r^2)\right).
\]
The normalization of $f$ now gives
\[
 1
 =f(1)^2\int_0^1
   \left(\frac{f(r)}{f(1)}\right)^2r\dd r
 >f(1)^2\int_0^1r^{2n+1}e^{n(1-r^2)}\dd r
 =\frac12f(1)^2\J_n ,
\]
by the substitution $t=r^2$. This proves the claim.
\end{proof}

\subsection{Moment inequalities at the lower crossing}\label{subsec:moments}

Recall from Remark~\ref{rem:saint-james-consequence} that
$d_n=\beta_{n-1}/2-n>0$ and that
$\lambda_n(\beta_{n-1})=d_n(d_n+1)$.

\begin{lemma}\label{lem:crossing}
For every $n\geq1$,
\[
 d_n(d_n+1)<2n<3d_n^2+d_n.
\]
\end{lemma}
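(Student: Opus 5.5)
The plan is to work entirely inside the Euler representation at the single field $x=\beta_{n-1}/2=n+d$ with $d=d_n$, where both sectors $n-1$ and $n$ satisfy the Neumann condition at the common energy $\lambda=d(d+1)$ (Lemma~\ref{lem:intertwining} and Remark~\ref{rem:saint-james-consequence}). By Lemma~\ref{lem:apriori} and $\beta_{n-1}>2n$ we have $0<\nu<1/2$, with $\nu=\tfrac12\bigl(1-d(d+1)/(2x)\bigr)$. Let $w=w_{n-1,x,\nu}$ and write $m_k=\int_0^1t^kw(t)\dd t$.

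\textbf{Step 1: record the two Neumann conditions as moment identities.} By Lemma~\ref{lem:euler}, the Neumann condition in sector $n-1$ reads
\[
 m_1=\frac{d+1}{2x}.
\]
By~\eqref{eq:weight-shift}, the Neumann condition in sector $n$ reads
\[
 \frac{m_1-m_2}{1-m_1}=\frac{d}{2x}.
\]
Together these fix $m_2$, and hence the variance:
\[
 m_2-m_1^2=\frac{1-m_1}{2x}.
\]

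\textbf{Step 2: supply the information carried by the shape of the weight.} The Neumann conditions alone say nothing about $\nu$ or $n$. That information sits in the logarithmic derivative of the weight,
\[
 x+\frac{\nu-1}{t}-\frac{n-1-\nu}{1-t}.
\]
I will exploit it through the integration-by-parts identities
\[
 \int_0^1\bigl(t^{j}(1-t)\,w\bigr)'\dd t=0,\qquad j=1,2,\dots .
\]
The boundary terms vanish because $\nu>0$ and $n-1-\nu>-1$; for $n=1$ the factor $(1-t)^{-\nu}$ still makes $t(1-t)w$ vanish at $t=1$.
\begin{enumerate}[(i)]
 \item The case $j=1$ gives a linear relation among $m_0,m_1,m_2,\nu,n,x$. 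Combined with Step~1 it should reproduce the Saint-James identity, which serves as a consistency check.
 \item The case $j=2$ expresses $m_3$ in terms of known quantities.
 \item The case $j=3$, if needed, does the same for $m_4$.
\end{enumerate}
After these substitutions, every moment up to order three or four is an explicit rational function of $n$, $d$ and $\nu$, with $\nu$ itself given explicitly in terms of $d$ and $x=n+d$.

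\textbf{Step 3: convert positivity into the two inequalities.} I will impose strict positivity of the $2\times2$ Hankel (Gram) determinants of the positive measures $w$, $tw$, $(1-t)w$ and $t(1-t)w$ on $(0,1)$. For example, for $t(1-t)w$ the condition is
\[
 (m_1-m_2)(m_3-m_4)>(m_2-m_3)^2.
\]
Each such inequality becomes a polynomial inequality in $n$ and $d$.

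For the left inequality $d(d+1)<2n$, i.e.\ $\lambda<2n$, I first try the determinant for $tw$ or $(1-t)w$. It is a refinement of the trivial bound $\lambda<b=2n+2d$ from Lemma~\ref{lem:apriori}, so it should come from one extra moment. For the right inequality $2n<3d^2+d$, I expect to need the determinant involving $m_3$ (and possibly $m_4$). The case $n=1$ will be checked separately if the general algebra degenerates there.

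\textbf{Main obstacle.} The hard step is choosing which Gram determinant yields exactly the constants $1$ and $3$, and simplifying the resulting polynomial inequality, after substituting $\nu$ and $x=n+d$, to the clean forms $2n>d(d+1)$ and $2n<3d^2+d$. If the $2\times2$ determinants are too weak, I would pass to the $3\times3$ Hankel determinant of $w$, using $m_0,\dots,m_4$.
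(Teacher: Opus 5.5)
Your framework is the paper's. The two Neumann conditions fix $m_1,m_2$. Your integration-by-parts identities with $t^j(1-t)w$ are exactly the paper's recurrence $x m_{k+2}=(\nu+k)m_k+(d-k)m_{k+1}$ with $k=j-1$, and your case $j=1$ is indeed consistent with $\nu=\tfrac12-d(d+1)/(4x)$. The inequalities then come from positivity of Gram determinants. The paper packages this through the monic quadratic $\Pi$ orthogonal to $1$ and $t$, using $\Pi(0)>0$ and $\langle\Pi,\Pi\rangle>0$.

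What is off is your guess about which determinant gives which inequality, and your first attempt at the left inequality would fail. With $m_0=1$ and $x=n+d$, the recurrence gives
\[
 m_1m_3-m_2^2=\frac{(2n+d-1)(3d^2+d-2n)}{16x^4},
 \qquad
 (1-m_1)(m_2-m_3)-(m_1-m_2)^2=\frac{(2n+d-1)^2(2n-d)}{16x^4}.
\]
So the $tw$ determinant, which needs only $m_3$, gives the \emph{right} inequality $2n<3d^2+d$; it equals $(m_2-m_1^2)\,\Pi(0)$. The $(1-t)w$ determinant yields only $d<2n$, which is weaker than the a priori bound $\lambda<b$. The $t(1-t)w$ determinant also bounds $n$ only from above.

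The left inequality is not a one-moment refinement of $\lambda<b$. It requires $m_4$ and comes from your fallback, the $3\times3$ Hankel determinant:
\[
 \det(m_{i+j})_{0\leq i,j\leq2}=\frac{(2n+d-1)^2\bigl(2n-d(d+1)\bigr)}{16x^6},
\]
which is $(m_2-m_1^2)\langle\Pi,\Pi\rangle$ in the paper's notation. Since $2n+d-1>0$ for every $n\geq1$, no separate treatment of $n=1$ is needed. With this assignment (the $tw$ determinant for the right inequality, the $3\times3$ Hankel determinant of $w$ for the left one) your plan becomes exactly the paper's proof.
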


\begin{proof}
Write $d=d_n$ and
\[
 x=\frac {\beta_{n-1}}{2}=n+d,
 \quad \nu=\frac{1}{2}-\frac{d(d+1)}{4x},
\]
so that $\nu$ is attached to $\lambda=\lambda_n(\beta_{n-1})=d(d+1)$
by~\eqref{eq:x-nu}. Since $\beta_{n-1}>2n$, Lemma~\ref{lem:apriori} gives
$0<\nu<1/2$, so all the Euler integrals used below are convergent. Recall
from~\eqref{eq:x-nu} that $\nu$ does not depend on the sector index: at the
crossing, the positive radial eigenfunctions in sectors $n-1$ and $n$ are
proportional, respectively, to $u_{n-1,x,\nu}$ and $u_{n,x,\nu}$.

Define
\[
 w(t)=e^{xt}t^{\nu-1}(1-t)^{n-\nu-1},
 \qquad 0<t<1,
\]
which is proportional to $w_{n-1,x,\nu}$, and, for $k\geq0$, set
\[
 I_k=\int_0^1t^k w(t)\dd t,
 \qquad
 m_k=\frac{I_k}{I_0}.
\]
By~\eqref{eq:weight-shift}, the Euler weight for the sector-$n$ solution is
proportional to $(1-t)w(t)$, and its normalized first moment is therefore
$(m_1-m_2)/(1-m_1)$. By Lemma~\ref{lem:euler}, the Neumann conditions in sectors
$n-1$ and $n$ read, respectively,
\[
 \frac{I_1}{I_0}=\frac{x-(n-1)}{2x}=\frac{d+1}{2x},
 \qquad
 \frac{m_1-m_2}{1-m_1}=\frac{x-n}{2x}=\frac {d}{2x}.
\]
Consequently,
\begin{equation}\label{eq:m1-m2}
 m_1=\frac{d+1}{2x},
 \qquad
 m_2=\frac{d^2+3d+2n}{4x^2}.
\end{equation}

For $k\geq0$, integration of the derivative of $e^{xt}t^{\nu+k}(1-t)^{n-\nu}$
over $(0,1)$ gives (the boundary terms vanish because $0<\nu<1/2$ and $n\geq 1$)
\begin{equation}\label{eq:moment-recurrence}
 x m_{k+2}=(\nu+k)m_k+(d-k)m_{k+1}.
\end{equation}
Let
\[
 \langle p,q\rangle=\frac1{I_0}\int_0^1p(t)q(t)w(t)\dd t,
\]
and put
\[
 \Pi(t)=t^2-\frac{2d}{x}t+\frac{3d^2+d-2n}{4x^2}.
\]
Equations~\eqref{eq:m1-m2} and~\eqref{eq:moment-recurrence}, with $k=1$, give
\[
 \langle \Pi,1\rangle=\langle \Pi,t\rangle=0.
\]

We claim that the monic quadratic $\Pi$ has two simple zeros in $(0,1)$. Indeed,
from $\langle \Pi,1\rangle = 0$ we find that it must have at least one zero in
$(0,1)$. If there were only one simple zero in $(0,1)$, say $t=t_1$, it would
hold that $\langle \Pi,t-t_1\rangle\neq0$. If there were a double zero, then we
would still get $\langle \Pi,1\rangle\neq 0$. The product of the zeros is
\[
 \Pi(0)=\frac{3d^2+d-2n}{4x^2}>0,
\]
which proves $2n<3d^2+d$.

Since $\Pi$ is orthogonal to $1$ and $t$, a further use
of~\eqref{eq:moment-recurrence}, for $k=1,2$, gives
\[
 0<\langle \Pi,\Pi\rangle
 =\langle t^2,\Pi\rangle
 =\frac{(2n+d-1)(2n-d^2-d)}{4x^4}.
\]
The first factor is positive, and hence $d(d+1)<2n$.
\end{proof}

\subsection{Lower bounds on $\J_n$}\label{subsec:Jn}

We first bound $\J_n$ from below by an expression which involves $n$ alone,
and then compare that expression with $1/(d_n+1)$.

\begin{lemma}\label{lem:Jn-lower}
For every integer $n\geq1$ one has $Q_n(\J_n)>0$, where
\[
 Q_n(t)=2(n+1)^2t^2+2(n+1)t-(3n+4) ;
\]
equivalently,
\begin{equation}\label{eq:Jn-lower}
 \J_n>\frac{\sqrt{6n+9}-1}{2(n+1)} .
\end{equation}
\end{lemma}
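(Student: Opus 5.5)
We prove $Q_n(\J_n)>0$ by positivity of a Gram determinant for the weight $w_n(t)=t^ne^{n(1-t)}$ on $(0,1)$, of which $\J_n$ is the total mass. This is the same mechanism as in Lemma~\ref{lem:crossing}. Set $K_k=\int_0^1t^{n+k}e^{n(1-t)}\dd t$, so $K_0=\J_n$. Integrating the derivative of $t^{n+k+1}e^{n(1-t)}$ over $(0,1)$ gives the exact recurrence
\[
 1=(n+k+1)K_k-nK_{k+1},\qquad k\geq0 .
\]
Hence every $K_k$ is an explicit affine function of $\J_n$ with rational coefficients in $n$. Any quadratic Gram-type inequality among the $K_k$ therefore becomes a quadratic inequality in $\J_n$ alone, which is the shape of $Q_n$.

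The first candidate is the Cauchy--Schwarz inequality $K_1^2<K_0K_2$, i.e.\ the Gram determinant of $\{1,t\}$ with weight $w_n$. A quick check shows it collapses to $(n+1)\J_n^2>1$. This gives only $\J_n>(n+1)^{-1/2}$, while the target behaves like $\sqrt{3/(2n)}$, so it is too weak. The leading coefficient $2(n+1)^2$ of $Q_n$ and the factor $1-t$ appearing throughout the paper suggest the next attempt. We will use the Gram determinant of $\{1,t\}$ for the weight $(1-t)w_n$:
\[
 (K_0-K_1)(K_2-K_3)-(K_1-K_2)^2>0 ,
\]
which holds strictly because $(1-t)w_n>0$ on $(0,1)$. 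After substituting the recurrence, this is a quadratic polynomial in $\J_n$. We expect it to coincide, up to a positive factor depending on $n$, with $Q_n(\J_n)$.

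If that determinant does not reproduce $Q_n$ exactly, there are two fallbacks. The first is the $3\times3$ Hankel determinant $\det(K_{i+j})_{0\le i,j\le2}>0$. After substitution it is a priori cubic in $\J_n$, and we would factor out a known positive linear factor, as in the last step of Lemma~\ref{lem:crossing}. The second is to minimize $\int_0^1(1-t)(t-c)^2w_n\dd t>0$ over $c$ at a convenient explicit $c$ of order $1-O(n^{-1/2})$; this yields a weaker but sufficient quadratic inequality.

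Once $Q_n(\J_n)>0$ is established, the rest is immediate. The roots of $Q_n$ are $\bigl(-1\pm\sqrt{6n+9}\bigr)/\bigl(2(n+1)\bigr)$, since the discriminant equals $4(n+1)^2+8(n+1)^2(3n+4)=4(n+1)^2(6n+9)$. Because $\J_n>0$ and $Q_n$ has positive leading coefficient, $\J_n$ must exceed the positive root, which is exactly~\eqref{eq:Jn-lower}.

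The main obstacle is the algebraic step: finding which Gram determinant gives precisely $Q_n$, and checking that the spurious factors produced by the recurrence are positive. We will first carry out the $(1-t)$-weighted $2\times2$ determinant symbolically and compare it with $Q_n$.
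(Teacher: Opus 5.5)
Your primary candidate cannot work: it bounds $\J_n$ from the wrong side. The recurrence gives $K_k-K_{k+1}=\bigl(1-(k+1)K_k\bigr)/n$, and substituting,
\[
 n^4\bigl[(K_0-K_1)(K_2-K_3)-(K_1-K_2)^2\bigr]
 =2(n+1)-(3n+4)\J_n-(n+1)(n-2)\J_n^2 .
\]
This is not a multiple of $Q_n(\J_n)$. Its positivity gives an \emph{upper} bound on $\J_n$, for instance $\J_2<3/5$ and $\J_1<(7-\sqrt{17})/4$.

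Your second fallback fails for the same structural reason. For fixed $c$, the inequality $\int_0^1(1-t)(t-c)^2w_n\dd t>0$ is affine in $\J_n$. Its constant term is $\bigl(n^2c^2-2n(n+2)c+n^2+6n+6\bigr)/n^3$, which is positive for every $c$ because the discriminant is $-8n^2(n+1)$. So it yields only upper bounds or trivial statements, never a lower bound. There is also almost no slack to spend at small $n$: with $\J_1=e-2$ one has $Q_1(\J_1)\approx 5.6\cdot10^{-4}$.

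Your first fallback, the $3\times3$ Hankel determinant, does work, and it is exactly the paper's proof. The paper uses the moments
\[
 A_k=\int_0^1(1-t)^kw_n(t)\dd t .
\]
The bases $\{1,t,t^2\}$ and $\{1,1-t,(1-t)^2\}$ differ by a triangular change of basis of determinant $-1$, so your $\det(K_{i+j})_{0\le i,j\le2}$ equals the paper's $\det(A_{i+j})_{0\le i,j\le2}$. The recurrence then gives
\[
 \det(A_{i+j})_{0\leq i,j\leq2}=\frac{\bigl((n+2)\J_n-2\bigr)\,Q_n(\J_n)}{n^6}.
\]
Your proposal leaves open exactly the two points that carry the proof:
\begin{enumerate}[(a)]
 \item the cubic factors with $Q_n(\J_n)$ as a factor;
 \item the cofactor is positive, because $(n+2)\J_n-2=n^2A_2=n^2\int_0^1(1-t)^2w_n(t)\dd t$.
\end{enumerate}
Once these are established, your final root computation is correct and matches the paper.
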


\begin{proof}
Fix an integer $n\geq1$ and put
\[
 h(u)=(1-u)e^u,
 \qquad
 A_k=\int_0^1u^k h(u)^n\dd u.
\]
The substitution $t=1-u$ shows that $A_0=\J_n$. Since $h'(u)/h(u)=-u/(1-u)$,
integration of the derivative of $u^k(1-u)h(u)^n$ over $(0,1)$ gives
\begin{equation}\label{eq:A-recurrence}
 A_0+nA_1=1,
 \qquad
 kA_{k-1}-(k+1)A_k-nA_{k+1}=0 \quad (k\geq1).
\end{equation}
The matrix $(A_{i+j})_{0\leq i,j\leq2}$ is positive definite, since it is the
Gram matrix of $1,u,u^2$ in $L^2((0,1),h(u)^n\dd u)$. Successive use
of~\eqref{eq:A-recurrence} gives
\[
 \det(A_{i+j})_{0\leq i,j\leq2}
 =\frac{\bigl((n+2)A_0-2\bigr)
 \bigl(2(n+1)^2A_0^2+2(n+1)A_0-(3n+4)\bigr)}{n^6}.
\]
The first factor is $n^2A_2>0$, whence $Q_n(\J_n)>0$. Finally, the discriminant
of $Q_n$ is $4(n+1)^2+8(n+1)^2(3n+4)=4(n+1)^2(6n+9)$, so that the positive root
of $Q_n$ is the right-hand side of~\eqref{eq:Jn-lower}; since $\J_n>0$, the two
statements are equivalent.
\end{proof}

\begin{remark}\label{rem:two-by-two}
The same computation with the $2\times2$ Gram matrix $(A_{i+j})_{0\leq i,j\leq1}$
gives, by~\eqref{eq:A-recurrence},
\[
 A_0A_2-A_1^2=\frac{(n+1)A_0^2-1}{n^2}>0,
 \qquad\text{that is,}\qquad
 \J_n>\frac1{\sqrt{n+1}} .
\]
This weaker bound is not sufficient for Lemma~\ref{lem:Jn} below; see
Remark~\ref{rem:sharpness}.
\end{remark}

\begin{lemma}\label{lem:Jn}
For every integer $n\geq1$,
\begin{equation}\label{eq:key-inequality}
 (d_n+1)\J_n>1.
\end{equation}
\end{lemma}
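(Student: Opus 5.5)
Since $\J_n>0$ and $d_n>0$, the plan is to combine the lower bound~\eqref{eq:Jn-lower} of Lemma~\ref{lem:Jn-lower} with the lower bound on $d_n$ from Lemma~\ref{lem:crossing}. The inequality $2n<3d_n^2+d_n$ gives
\[
 d_n>\frac{-1+\sqrt{1+24n}}{6}=:\delta_n ,
\]
and~\eqref{eq:Jn-lower} gives $\J_n>(\sqrt{6n+9}-1)/(2(n+1))$. It therefore suffices to prove, for every integer $n\geq1$,
\[
 (\delta_n+1)\bigl(\sqrt{6n+9}-1\bigr)\geq 2(n+1),
 \qquad\text{that is,}\qquad
 \bigl(5+\sqrt{24n+1}\bigr)\bigl(\sqrt{6n+9}-1\bigr)\geq 12(n+1).
\]
A quick asymptotic check makes this plausible. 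The left-hand side is $\sqrt{144n^2+\dots}+(5\sqrt{6n}-\sqrt{24n})+\dots=12n+3\sqrt{6n}+O(1)$, while the right-hand side is $12n+12$. So the inequality holds for large $n$ with a margin of order $\sqrt n$, but it needs checking for small $n$. For $n=1$ we get $(5+5)(\sqrt{15}-1)\approx 28.7\geq 24$. For $n=2$ we get $(5+7)(\sqrt{21}-1)\approx 43.0\geq 36$. The margin appears to stay positive for all $n$.

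To make the argument clean, I would set $s=\sqrt{24n+1}$ and $\sigma=\sqrt{6n+9}$, and use $\sqrt{24n+1}\,\sqrt{6n+9}=\sqrt{144n^2+222n+9}\geq 12n+9$. This holds because $(12n+9)^2=144n^2+216n+81\leq 144n^2+222n+9$ exactly when $6n\geq72$, i.e.\ $n\geq12$. Then it remains to show
\[
 12n+9+5\sigma-s-5\geq 12n+12,
 \qquad\text{i.e.}\qquad
 5\sqrt{6n+9}\geq \sqrt{24n+1}+8 .
\]
Since $5\sqrt{6n+9}=\tfrac52\sqrt{24n+36}>\tfrac52\sqrt{24n+1}$, it is enough that $\tfrac32\sqrt{24n+1}\geq 8$, which holds for all $n\geq2$. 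The remaining cases $1\leq n\leq 11$ are then checked directly, either numerically with rigorous rounding or by squaring the finitely many explicit surd inequalities. Alternatively, a slightly weaker bound such as $\sqrt{144n^2+222n+9}\geq 12n+9-c$, with $c$ a small positive constant, can absorb the small cases into the general argument.

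The main obstacle is that the final elementary inequality has only an $O(\sqrt n)$ margin on a quantity of size $12n$. This is exactly why the cruder bound $\J_n>1/\sqrt{n+1}$ of Remark~\ref{rem:two-by-two} would fail: combined with $d_n\approx\sqrt{2n/3}$ it yields a product tending to $\sqrt{2/3}<1$. For this reason the proof must use both sharp inputs, the $3\times3$ Hankel bound and the quadratic lower bound on $d_n$, and the bookkeeping for small $n$ must be done carefully.
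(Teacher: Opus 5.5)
Your proposal is correct. It uses the same two inputs as the paper, namely~\eqref{eq:Jn-lower} and $2n<3d_n^2+d_n$. The two proofs differ only in how the final elementary inequality is checked.

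You fix $n$, pass to the boundary value $\delta_n$ by monotonicity in $d$, and then prove a surd inequality in $n$. This costs you a split at $n=12$ and a finite check for $1\le n\le 11$. Those cases do hold (e.g.\ $28.7>24$ at $n=1$), but they must actually be carried out.

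The paper keeps $d$ as the variable. Since $Q_n$ is increasing, it suffices that $Q_n(1/(d+1))<0$. Then $G(s)=(d+1)^2Q_s(1/(d+1))$ is a convex quadratic in the real parameter $s$. It is negative at $s=0$ and at $s=N=(3d^2+d)/2$, where $G(N)=-\frac d2(9d^2+2d+9)$. Hence $G(n)<0$, with everything polynomial and no case distinction.

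Your boundary inequality is in fact exactly $G(N)<0$. Write $n=(3\delta^2+\delta)/2$, so that $\sqrt{24n+1}=6\delta+1$ and $6n+9=9\delta^2+3\delta+9$. Your target becomes $(\delta+1)\sqrt{9\delta^2+3\delta+9}\geq 3\delta^2+2\delta+3$, and after squaring the difference is $\delta(9\delta^2+2\delta+9)>0$. Used this way, your monotonicity-in-$d$ reduction makes the paper's convexity step and its evaluation at $s=0$ unnecessary.

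If you prefer to stay in the variable $n$, the case split can also be avoided. First, $\sqrt{24n+1}\,\sqrt{6n+9}\geq 12n+6$ for all $n\geq1$. Second, $\sqrt{24n+1}<2\sqrt{6n+9}$ gives $5\sqrt{6n+9}-\sqrt{24n+1}>3\sqrt{15}>11$. Together these yield your target for every $n\geq1$.
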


\begin{proof}
Write $d=d_n$. Since $Q_n$ is strictly increasing on $(0,+\infty)$ and
$Q_n(\J_n)>0$ by Lemma~\ref{lem:Jn-lower}, it suffices to prove that
$Q_n\bigl(1/(d+1)\bigr)<0$. Set $N = (3d^2+d)/2$ and
\[
 G(s)=(d+1)^2 Q_s\bigl(1/(d+1)\bigr)
 =2(s+1)^2+2(s+1)(d+1)-(3s+4)(d+1)^2 ,
\]
where $Q_s$ is defined by the same formula as $Q_n$ with $n$ replaced by the
real parameter $s$. The second inequality in Lemma~\ref{lem:crossing} gives
$0<n<N$. The polynomial $G$ is convex, and
\[
 G(0)=-2d(2d+3)<0,
 \qquad
 G(N)=-\frac {d}{2}(9d^2+2d+9)<0 .
\]
Hence $G(n)<0$, that is $Q_n(1/(d+1))<0$, and the conclusion follows.
\end{proof}

\begin{remark}\label{rem:sharpness}
The two ingredients of this proof are asymptotically matched to leading order, so
that neither can be weakened. On the one hand, Laplace's method applied to
$\J_n=\int_0^1e^{n(\log t+1-t)}\dd t$ gives $\J_n=\sqrt{\pi/(2n)}(1+o(1))$,
while the right-hand side of~\eqref{eq:Jn-lower} is
$\bigl(\sqrt6/2\bigr)n^{-1/2}(1+o(1))$: the bound of Lemma~\ref{lem:Jn-lower} is
off by the factor $\sqrt{\pi/3}\approx\num{1.023}$ only. On the other hand the
second inequality of Lemma~\ref{lem:crossing} gives $d_n>\sqrt{2n/3}(1+o(1))$,
and
\[
 \sqrt{\frac23}\cdot\frac{\sqrt6}2=1 .
\]
Along this chain of estimates,~\eqref{eq:key-inequality} therefore holds only
by virtue of the lower-order terms. With the bound of
Remark~\ref{rem:two-by-two} in place of~\eqref{eq:Jn-lower}, one would obtain
$(d_n+1)\J_n>\sqrt{2/3}(1+o(1))$ instead, which is not enough. The
inequality~\eqref{eq:key-inequality} itself is not sharp: by
Remark~\ref{rem:beta-asymptotics} one has $d_n=\sqrt{2\Theta_0n}(1+o(1))$ and
therefore $(d_n+1)\J_n\to\sqrt{\pi\Theta_0}\approx\num{1.362}$, the loss being
entirely in the second inequality of Lemma~\ref{lem:crossing}.
\end{remark}

\subsection{Strict increase of every active branch}\label{subsec:positivity}

Fix $n\geq1$ and let $b\geq\beta_{n-1}$; write $d=b/2-n$, so that $d\geq d_n>0$.
Since
\[
 \frac{\partial}{\partial b}
 \left[\left(\frac {n}{r}-\frac{br}{2}\right)^2-(b/2-n)^2\right]
 =\frac{b}{2}(r^2-1)\leq0
 \qquad (0<r<1,\ b>0),
\]
the quadratic form $q_{n,b}-(b/2-n)^2\norm{\cdot}^2$, whose domain does not
depend on $b$, is nonincreasing in $b$ for each fixed function. The min--max
principle therefore shows that
\[
 b\mapsto \lambda_n(b)-(b/2-n)^2
\]
is nonincreasing on $(0,+\infty)$. At $b=\beta_{n-1}$,
equation~\eqref{eq:crossing-energy} gives
\[
 \lambda_n(\beta_{n-1})-\left(\frac{\beta_{n-1}}2-n\right)^2
 =d_n(d_n+1)-d_n^2=d_n .
\]
Hence, for $b\geq\beta_{n-1}$,
\[
  \lambda_n(b)-(b/2-n)^2 \leq d_n .
\]
If the left-hand side is nonpositive, then $\lambda_n'(b)>0$ follows at once
from~\eqref{eq:easy-case}. We therefore assume that it is positive, and set
$D=\lambda_n(b)-(b/2-n)^2$, so that
\[
0 < D \leq d_n \leq d .
\]
Since $b\geq\beta_{n-1}>2n$, Lemma~\ref{lem:trace} and Lemma~\ref{lem:Jn} give
\[
 f_{n,b}(1)^2<\frac{2}{\J_n}<2(d_n+1)\leq2(d+1).
\]
Multiplying~\eqref{eq:derivative-identity} by $2b$ and using
$\lambda_n(b)=D+d^2$, we obtain
\[
 2b\lambda_n'(b)
 =2\lambda_n(b)-D f_{n,b}(1)^2
 >2\lambda_n(b)-2D(d+1)
 = 2d(d-D)
 \geq 0.
\]
Thus $\lambda_n'(b)>0$ for every $b\geq\beta_{n-1}$, which
proves~\eqref{eq:goal-monotonicity}.

\begin{proof}[Proof of Theorem~\ref{thm:main}]
By~\eqref{eq:goal-monotonicity}, $\lambda_0$ is strictly increasing on
$[0,+\infty)$, and for every $n\geq1$ the branch $\lambda_n$ is strictly
increasing on the whole interval on which it can realize the ground energy,
namely from its lower crossing onward. By (HL1) the global ground energy is
obtained successively from these branches, and adjacent branches agree at each
crossing; since $\beta_n>2(n+1)$, only finitely many crossings occur in a
bounded $b$-interval. Combining the strict increase across those finitely many
intervals gives $\lambda(b_1)<\lambda(b_2)$ whenever $0<b_1<b_2$.
\end{proof}

\section{Monotonicity of the normalized crossing energies}\label{sec:crossing-monotonicity}

We now prove Theorem~\ref{thm:crossing-quotients}, and then deduce
Theorem~\ref{thm:de-gennes-bound} from it.

The argument runs as follows. We first show that the crossings of $\lambda_n$
and $\lambda_{n+1}$ are exactly the zeros in $(0,1)$ of an explicit function
$\mathcal R_n$ of the normalized energy $s$ alone
(Section~\ref{subsec:zeros}); by (HL1) there is exactly one such zero, namely
$\eta_n^*$, and $\mathcal R_n(1)<0$. It therefore suffices to prove
\[
 \mathcal R_n(\eta_{n-1}^*)>0 ,
\]
for the intermediate value theorem then places $\eta_n^*$ strictly to the right
of $\eta_{n-1}^*$. To do so we \emph{freeze} the normalized energy at the value
$\eta=\eta_{n-1}^*$ and let the field vary. Freezing $\eta$ freezes the
parameter $\nu$ of Section~\ref{sec:preliminaries}, so a single Kummer function
governs both the actual crossing $\beta_{n-1}$ and the field $\widehat x$ at
which the \emph{next} crossing would occur if the normalized energy did not
increase (Section~\ref{subsec:comparison}). The logarithmic derivative of that
Kummer function obeys a Riccati equation, and a convexity estimate shows that it
overshoots between the two fields (Section~\ref{subsec:convexity}). That
overshoot is precisely $\mathcal R_n(\eta)>0$.

Throughout this section we fix $n\geq1$ and write, in the notation of
Remark~\ref{rem:saint-james-consequence},
\begin{equation}\label{eq:section4-notation}
 d=d_n, \qquad x=n+d=\frac{\beta_{n-1}}2, \qquad
 \eta=\eta_{n-1}^*=\frac{\lambda(\beta_{n-1})}{\beta_{n-1}}=\frac{d(d+1)}{2x}.
\end{equation}
The first inequality in Lemma~\ref{lem:crossing} gives
\begin{equation}\label{eq:eta-x-less-n}
 \eta x=\frac{d(d+1)}2<n,
\end{equation}
and since $x>n$ it follows that $0<\eta<1$.

\subsection{Crossings as zeros of an explicit function}\label{subsec:zeros}

For $0<s\leq1$, let $z_n(s)$ be the larger root of
\begin{equation}\label{eq:saint-james-z}
 z^2-(2s+2n+1)z+n(n+1)=0 .
\end{equation}
The left-hand side of~\eqref{eq:saint-james-z}, evaluated at $z=n+1$, equals
$-2s(n+1)<0$; hence $z_n(s)>n+1$, and $z_n$ is real-analytic on $(0,1]$. Put
\[
 \nu_s=\frac{1-s}2\in\left[0,\frac12\right),
 \qquad
 u_{n,s}=u_{n,z_n(s),\nu_s},
 \qquad
 \mathcal R_n(s)=\frac{u_{n,s}'(1)}{u_{n,s}(1)} .
\]
By Lemma~\ref{lem:regular-solution}, $u_{n,s}$ is the regular solution of
$H_{n,2z_n(s)}u=2z_n(s)s u$; note that $\nu_s$ and $z_n(s)$ are exactly the
data attached by~\eqref{eq:x-nu} to the field $b=2z_n(s)$ and the energy
$\lambda=sb$. Since $M(\nu_s,n+1,\cdot)>0$, the function $\mathcal R_n$ is
well defined and real-analytic on $(0,1]$, and by~\eqref{eq:log-derivative-at-one}
\begin{equation}\label{eq:R-as-Y}
 \mathcal R_n(s)
 =2z_n(s)\frac{F_s'(z_n(s))}{F_s(z_n(s))}-\bigl(z_n(s)-n\bigr),
 \qquad F_s=M(\nu_s,n+1,\cdot).
\end{equation}

Since $M(0,n+1,\cdot)\equiv1$, we have $u_{n,1}(r)=r^ne^{-z_n(1)r^2/2}$ and
therefore
\begin{equation}\label{eq:R-at-one}
 \mathcal R_n(1)=n-z_n(1)<0 .
\end{equation}

\begin{lemma}\label{lem:zeros-of-R}
The zeros of $\mathcal R_n$ in $(0,1)$ are precisely the normalized energies of
the crossings of $\lambda_n$ and $\lambda_{n+1}$. Consequently $\mathcal R_n$
has exactly one zero in $(0,1)$, namely $\eta_n^*$.
\end{lemma}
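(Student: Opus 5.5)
We will deduce Lemma~\ref{lem:zeros-of-R} from the equivalence (i)$\Leftrightarrow$(ii) of Lemma~\ref{lem:intertwining}, and then invoke (HL1).

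\emph{Zeros are crossings.} Let $s\in(0,1)$ with $\mathcal R_n(s)=0$. Set $x=z_n(s)$, $b=2x$, $\nu=\nu_s\in(0,1/2)$ and $\lambda=sb$, so that $\nu$ is the parameter attached to $\lambda$ by~\eqref{eq:x-nu}. By definition $\mathcal R_n(s)=0$ means $u_{n,x,\nu}'(1)=0$. Moreover $x=z_n(s)$ solves~\eqref{eq:saint-james-z}. Expanding,
\[
(x-n)(x-n-1)-2sx=x^2-(2s+2n+1)x+n(n+1).
\]
Hence~\eqref{eq:saint-james-z} is exactly $(x-n)(x-n-1)=\lambda$. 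Condition (i) of Lemma~\ref{lem:intertwining} therefore holds. By (ii), the branches $\lambda_n$ and $\lambda_{n+1}$ cross at $b=2z_n(s)$ with common value $sb$, and the normalized crossing energy is $s$.

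\emph{Crossings are zeros.} Suppose $\lambda_n$ and $\lambda_{n+1}$ cross at some $b$ with common value $\lambda$, and put $s=\lambda/b$. By (HL1), $b>2(n+1)>2n$, so Lemma~\ref{lem:apriori} gives $0<\lambda<b$, i.e.\ $s\in(0,1)$ and $\nu=\nu_s\in(0,1/2)$. Lemma~\ref{lem:intertwining}, direction (ii)$\Rightarrow$(i), then gives $u_{n,b/2,\nu}'(1)=0$ and $(b/2-n)(b/2-n-1)=\lambda=sb$. Thus $x=b/2$ is a root of~\eqref{eq:saint-james-z}.

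It remains to check that $x$ is the \emph{larger} root $z_n(s)$. The quadratic is negative at $z=n+1$, so its smaller root is $<n+1$. Since $x=b/2>n+1$, we get $x=z_n(s)$. Hence $u_{n,s}=u_{n,x,\nu}$, and $\mathcal R_n(s)=0$.

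\emph{Uniqueness.} By (HL1) the branches $\lambda_n$ and $\lambda_{n+1}$ meet exactly once, at $\beta_n$, with normalized energy $\eta_n^*$. The correspondence above therefore shows that $\eta_n^*$ is the only zero of $\mathcal R_n$ in $(0,1)$.

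The only delicate points are the choice of the larger root and the a priori bound $s<1$. Both follow from $\beta_n>2(n+1)$ together with Lemma~\ref{lem:apriori}.
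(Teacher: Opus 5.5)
Your proof is correct and follows essentially the same route as the paper. Both directions come from the equivalence (i)$\Leftrightarrow$(ii) of Lemma~\ref{lem:intertwining}; $\beta_n>2(n+1)$ from (HL1) selects the larger root of~\eqref{eq:saint-james-z} and gives uniqueness, and Lemma~\ref{lem:apriori} (which you apply in sector $n$ rather than $n+1$, equally valid since $b>2(n+1)>2n$) places the crossing energy in $(0,1)$.
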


\begin{proof}
Let $0<s<1$, so that $0<\nu_s<1/2$, and put $z=z_n(s)$ and
$\lambda=2zs$. By construction, $z$ and $\nu_s$ are the data attached
by~\eqref{eq:x-nu} to the field $b=2z$ and the energy $\lambda$, and
equation~\eqref{eq:saint-james-z} says exactly that $(z-n)(z-n-1)=\lambda$.
Hence condition (i) of Lemma~\ref{lem:intertwining} holds if and only if
$u_{n,s}'(1)=0$, that is, if and only if $\mathcal R_n(s)=0$; and condition (ii)
of that lemma says that $\lambda_n$ and $\lambda_{n+1}$ cross at $b=2z$ with
normalized energy $s$. Since a crossing field satisfies $b>2(n+1)$ by (HL1), the
associated $z$ exceeds $n+1$ and is therefore the larger root
of~\eqref{eq:saint-james-z}, so that every crossing with normalized energy $s$
arises this way. The two conditions being equivalent, the first assertion
follows.

By (HL1) there is exactly one crossing, at $b=\beta_n$; its normalized energy is
$\eta_n^*$, and $\eta_n^*\in(0,1)$ by Lemma~\ref{lem:apriori} applied to sector
$n+1$ at $b=\beta_n$. This gives the second assertion.
\end{proof}

It remains to prove that $\mathcal R_n(\eta)>0$.

\subsection{The comparison point}\label{subsec:comparison}

Set
\[
 \widehat{x}=z_n(\eta).
\]
Thus $\widehat x$ is the (rescaled) field at which the branches $\lambda_n$ and
$\lambda_{n+1}$ would cross if the normalized energy at that crossing were still
equal to $\eta$. Equation~\eqref{eq:saint-james-z} is equivalently
\begin{equation}\label{eq:next-saint-james}
 (\widehat{x}-n-1)(\widehat{x}-n)=2\eta\widehat{x}.
\end{equation}
The function $z\mapsto(z-n-1)(z-n)-2\eta z$, evaluated at $x+1$ and
$x+2$, takes the values
\[
 -2\eta
 \quad\text{and}\quad
 \frac{2n(d+1)}x ,
\]
by~\eqref{eq:section4-notation}. The polynomial is upward-opening and negative
at $x+1$, so $x+1$ lies strictly between its two roots; since it is positive at
$x+2$, its larger root therefore satisfies
\[
 x+1<\widehat{x}<x+2.
\]
Put
\[
 \delta=\widehat{x}-x,
\]
so that $1<\delta<2$. We improve the upper bound and show that
\begin{equation}\label{eq:eta-plus-delta}
 \eta+\delta<2.
\end{equation}
The two Saint-James relations $2\eta x=d(d+1)$ and~\eqref{eq:next-saint-james}
give, after subtraction,
\begin{equation}\label{eq:eta-delta}
 2\eta\delta=(\delta-1)(2d+\delta).
\end{equation}
Using $2\eta x=d(d+1)$ in~\eqref{eq:eta-delta}, we obtain
\[
 (\delta-1)x=\frac{d(d+1)\delta}{2d+\delta}
\]
and hence
\[
 d-(\delta-1)x=\frac{d^2(2-\delta)}{2d+\delta}>0.
\]
Together with~\eqref{eq:eta-x-less-n}, this yields
\[
 x(2-\delta-\eta)
 =(n-\eta x)+\bigl(d-(\delta-1)x\bigr)>0.
\]
Consequently,~\eqref{eq:eta-plus-delta} holds.

\subsection{The convexity estimate}\label{subsec:convexity}

Put $\nu=(1-\eta)/2$, $F=M(\nu,n+1,\cdot)$, and
\[
 Y(z)=2z\frac{F'(z)}{F(z)}-(z-n).
\]
The sector-$n$ Neumann condition~\eqref{eq:kummer-log-derivative} at the
crossing $\beta_{n-1}=2x$ gives $Y(x)=0$, while~\eqref{eq:R-as-Y} and the
definition of $\widehat x$ give
\begin{equation}\label{eq:R-Y-relation}
 \mathcal R_n(\eta)=Y(\widehat{x}).
\end{equation}
Here we have used, once more, that the parameter $\nu$ attached
by~\eqref{eq:x-nu} to the normalized energy $\eta$ is the same at the two fields
$2x$ and $2\widehat x$. Kummer's equation (\cite[Equation~13.2.1]{DLMF}) implies
\begin{equation}\label{eq:Y-riccati}
 Y'(z)=\frac{(z-n)^2-Y(z)^2}{2z}-\eta.
\end{equation}

\begin{lemma}\label{lem:Y-positive}
It holds that $Y(\widehat{x})>0$.
\end{lemma}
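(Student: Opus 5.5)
The plan is a second-order Taylor argument for $Y$ on $[x,\widehat x]$, driven entirely by the Riccati equation~\eqref{eq:Y-riccati}, the initial condition $Y(x)=0$, and the arithmetic facts $1<\delta<2$ and $\eta+\delta<2$ from Section~\ref{subsec:comparison}. I have not checked that the curvature bound in the third step actually closes; that step is the real content of the lemma.

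\emph{Data at $z=x$.} Using $x-n=d$ and $2\eta x=d(d+1)$, equation~\eqref{eq:Y-riccati} gives
\[
 Y'(x)=\frac{d^2}{2x}-\frac{d(d+1)}{2x}=-\frac d{2x}<0 .
\]
So $Y$ starts by decreasing, and the statement really says that curvature pulls it back above zero before $\widehat x$. Differentiating~\eqref{eq:Y-riccati} once gives
\[
 Y''=\frac{(z-n)-YY'}{z}-\frac{(z-n)^2-Y^2}{2z^2},
\]
and at $z=x$ this equals $d(x+n)/(2x^2)>0$. If $Y''$ kept this value on the whole interval, the second-order Taylor polynomial would give
\[
 Y(\widehat x)\approx\frac{d\delta}{4x^2}\bigl((x+n)\delta-2x\bigr),
\]
which is positive as soon as $\delta>2x/(x+n)$. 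The proof should make this heuristic rigorous.

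\emph{A priori control of $Y$ on $[x,\widehat x]$.} By Lemma~\ref{lem:euler}, $F'/F$ is the first moment of $w_{n,z,\nu}$. It is nondecreasing in $z$, since its derivative is a variance. Hence on $[x,\widehat x]$
\[
 Y(z)\ge 2z\frac{d}{2x}-(z-n)=-\frac{n(z-x)}x\ge-\frac{n\delta}{x},
\]
and $Y$ is bounded above by a similar elementary bound. With these bounds, and $|Y'|$ controlled through~\eqref{eq:Y-riccati}, the only large quantity in $Y''$ is $(z-n)/z$. The formula for $Y''$ then yields an explicit lower bound $Y''\ge c$ on $[x,\widehat x]$, with $c=d(x+n)/(2x^2)$ up to terms of relative size $O(\delta/d)$.

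\emph{Conclusion, and the main obstacle.} Taylor's theorem with this lower bound gives
\[
 Y(\widehat x)\ge-\frac{d\delta}{2x}+\frac c2\delta^2 ,
\]
and it remains to show that the right-hand side is positive. I expect this to be the main obstacle: the margin is small, because $\delta-1$ is only of order $\eta/d$ by~\eqref{eq:eta-delta}. So the constant in the curvature bound must be kept sharp, and the relations $2\eta\delta=(\delta-1)(2d+\delta)$ and $\eta+\delta<2$ (equivalently $d>(\delta-1)x$ together with $\eta x<n$) must be substituted exactly, not estimated crudely. If the crude barrier bound is too weak, I would instead compare $Y$ with the explicit solution of a Riccati equation in which $Y^2$ is replaced by a constant upper bound. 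That comparison equation is linear, so its solution at $\widehat x$ can be computed in closed form and checked for positivity using $\eta+\delta<2$.
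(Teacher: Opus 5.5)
There is a genuine gap, and it is the step you flag as unchecked. The argument hinges on a lower bound for $Y''$ on $[x,\widehat x]$, and the precision you announce for it ($c$ ``up to terms of relative size $O(\delta/d)$'') cannot work.

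Substituting $x+n=2x-d$, $2\eta x=d(d+1)$ and \eqref{eq:eta-delta} exactly gives
\[
 -\frac{d\delta}{2x}+\frac c2\,\delta^2
 =\frac{d\delta}{4x^2}\bigl((x+n)\delta-2x\bigr)
 =\frac{d^2\delta^2(2-\delta)}{4x^2(2d+\delta)} .
\]
So your heuristic condition $\delta>2x/(x+n)$ is in fact equivalent to $\delta<2$, which is good news. But the resulting margin is only about $d/(8x^2)$, whereas each of the two terms is about $d/(2x)$. A relative loss of order $\delta/d$ in $c$ costs about $\delta^3/(2x)$. For large $n$ this exceeds the margin by a factor of order $x/d$, which is at least of order $\sqrt x$ since $d(d+1)=2\eta x<2x$.

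You therefore need $Y''(z)\geq Y''(x)$ exactly on the whole interval. Write
\[
 Y''=\frac12\Bigl(1-\frac{n^2}{z^2}\Bigr)-\frac{YY'}{z}+\frac{Y^2}{2z^2}.
\]
The first term is increasing and equals $c$ at $z=x$. So what you need is $YY'\leq\frac{n^2z}{2}\bigl(x^{-2}-z^{-2}\bigr)$ on $[x,\widehat x]$, uniformly in $n\geq1$, including small $n$ where $d/x$ is not small. That requires two-sided control of $Y$. The monotonicity of $F'/F$ only bounds $Y$ from below for $z\geq x$, and no comparable upper bound is available unconditionally.

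Your fallback fails for a related reason. The positive part of $Y(\widehat x)$ comes from the $t^2$ term $\frac{n^2t^2}{2x^3}$ in the expansion of $\frac{(d+t)^2}{2(x+t)}$, and is only about $\frac{n^2\delta^3}{6x^3}$. Replacing $Y^2$ by a constant $M^2$ costs about $\frac{M^2\delta}{2x}$. With your bound $M=n\delta/x$, this is three times the gain. The bound on $Y^2$ has to vanish quadratically at $z=x$.

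The paper avoids $Y''$ altogether and works at first order, arguing by contradiction.
\begin{itemize}
\item[(i)] Assume $Y(\widehat x)\leq0$. At any zero of $Y$ one has $Y'=g(z)=\frac{(z-n)^2}{2z}-\eta$, with $g$ strictly increasing and $g(x)=-d/(2x)<0$. Hence $Y$ cannot enter $\{Y>0\}$ and leave it again inside $[x,\widehat x]$, so $Y\leq0$ there. This is exactly the missing upper bound.
\item[(ii)] Combined with $Y>-(z-n)$, \eqref{eq:Y-riccati} gives $\frac{d}{dt}\bigl(Y(x+t)+\eta t\bigr)>0$, hence $Y(x+t)^2<\eta^2t^2$. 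This is a bound of the required quadratic type.
\item[(iii)] One integration of \eqref{eq:Y-riccati} then gives $Y(\widehat x)>\int_0^\delta K$, where $K(t)=\frac{(d+t)^2-\eta^2t^2}{2(x+t)}-\eta$ is convex because $\eta x<n$.
\item[(iv)] The midpoint bound, together with $2(x+\delta/2)K(\delta/2)=\frac\delta4(2-\delta-\eta^2\delta)>0$ (a consequence of \eqref{eq:eta-plus-delta}), finishes the proof.
\end{itemize}
To keep your Taylor route, you would have to borrow this contradiction and sign argument to get the upper bound on $Y$, and then still verify the inequality on $YY'$ above.
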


\begin{proof}
Assume, for contradiction, that $Y(\widehat{x})\leq0$. We show first that $Y$ is
then nonpositive on all of $[x,\widehat x]$, and then that this gives a convex
lower bound for $Y'$ with positive average, whence
$Y(\widehat x)>0$.

\emph{Step 1: $Y\leq0$ on $[x,\widehat x]$.} At a zero of $Y$,
equation~\eqref{eq:Y-riccati} gives
$Y'(z)=g(z)$, where
\[
 g(z)=\frac{(z-n)^2}{2z}-\eta .
\]
Since $g'(z)=(z-n)(z+n)/(2z^2)>0$ for $z>n$, the function $g$ is strictly
increasing on $(n,+\infty)$, and by~\eqref{eq:eta-x-less-n}
\[
 g(x)=\frac{d^2}{2x}-\eta=\frac{d^2-d(d+1)}{2x}=-\frac{d}{2x}<0 .
\]
Suppose $Y$ were positive somewhere in $(x,\widehat x)$, and let
$(\alpha,\gamma)$ be a connected component of $\{Y>0\}\cap(x,\widehat x)$. Since
$Y(x)=0$ and $Y(\widehat x)\leq0$, we have $Y(\alpha)=Y(\gamma)=0$ with
$x\leq\alpha<\gamma\leq\widehat x$. As $Y>0$ immediately to the right of $\alpha$
and immediately to the left of $\gamma$, we get $g(\alpha)=Y'(\alpha)\geq0$ and
$g(\gamma)=Y'(\gamma)\leq0$, contradicting the strict monotonicity of $g$. (If
$\alpha=x$, the contradiction is immediate from $g(x)<0$.) Hence
\begin{equation}\label{eq:Y-nonpositive}
 Y(z)\leq0, \qquad x\leq z\leq\widehat{x}.
\end{equation}

\emph{Step 2.} By Lemma~\ref{lem:euler}, $F>0$ and $F'>0$, so $Y(z)>-(z-n)$. It
follows from~\eqref{eq:Y-nonpositive} that, for $0<t\leq\delta$,
\[
 \frac d{dt}\bigl(Y(x+t)+\eta t\bigr)
 =\frac{(d+t)^2-Y(x+t)^2}{2(x+t)}>0 .
\]
Since $Y(x)=0$, we obtain
\[
 -\eta t<Y(x+t)\leq0,
 \qquad\text{hence}\qquad
 Y(x+t)^2<\eta^2t^2
 \quad (0<t\leq\delta).
\]
Integrating~\eqref{eq:Y-riccati} therefore gives
\begin{equation}\label{eq:Y-convex-lower}
 Y(\widehat{x})>
 \int_0^\delta K(t)\dd t,
 \qquad
 K(t)=\frac{(d+t)^2-\eta^2t^2}{2(x+t)}-\eta.
\end{equation}
Differentiating twice, and recalling that $d=x-n$, we get by~\eqref{eq:eta-x-less-n} that
\[
 K''(t)=\frac{n^2-\eta^2x^2}{(x+t)^3}>0.
\]
Thus, $K$ is convex, and
\[
 \int_0^\delta K(t)\dd t
 \geq\delta K\left(\frac\delta2\right).
\]
We show next that $K(\delta/2)>0$. Averaging the two Saint-James relations $2\eta
x=d(d+1)$ and~\eqref{eq:next-saint-james} gives
\[
 2\eta\left(x+\frac\delta2\right)
 =\frac{d(d+1)+(d+\delta-1)(d+\delta)}2 .
\]
Consequently,
\[
 2\left(x+\frac\delta2\right)K\left(\frac\delta2\right)
 =\left(d+\frac\delta2\right)^2
   -\frac{\eta^2\delta^2}{4}
   -2\eta\left(x+\frac\delta2\right)
 =\frac\delta4\bigl(2-\delta-\eta^2\delta\bigr).
\]
By~\eqref{eq:eta-plus-delta}, one has $\eta<2-\delta$. Since $1<\delta<2$,
\[
 \eta\delta<\delta(2-\delta)=1-(\delta-1)^2<1 .
\]
Thus $\eta^2\delta=\eta\cdot\eta\delta<\eta<2-\delta$, and therefore
$K(\delta/2)>0$. By~\eqref{eq:Y-convex-lower} this gives $Y(\widehat x)>0$,
contradicting the assumption $Y(\widehat{x})\leq0$.
\end{proof}

\begin{proof}[Proof of Theorem~\ref{thm:crossing-quotients}]
Fix $n\geq1$. By~\eqref{eq:R-Y-relation} and Lemma~\ref{lem:Y-positive},
$\mathcal R_n(\eta)>0$, while $\mathcal R_n(1)<0$ by~\eqref{eq:R-at-one}. Since
$\mathcal R_n$ is continuous on $(0,1]$, it has a zero in $(\eta,1)$. By
Lemma~\ref{lem:zeros-of-R} that zero is $\eta_n^*$, whence
$\eta_{n-1}^*=\eta<\eta_n^*$.
\end{proof}

\subsection{The de Gennes bound}\label{subsec:de-gennes-from-crossings}

\begin{proof}[First proof of Theorem~\ref{thm:de-gennes-bound}]
Since $\beta_n>2(n+1)$, one has $\beta_n\to\infty$, so it follows
from~\eqref{eq:de-gennes-limit} that
\[
 \eta_n^*=\frac{\lambda(\beta_n)}{\beta_n} \to \Theta_0 .
\]
Theorem~\ref{thm:crossing-quotients} therefore gives
\[
 \eta_n^*<\Theta_0 \qquad(n\geq0).
\]
Let $n\geq1$. By (HL2) the function $b\mapsto\lambda_n(b)/b$ decreases to a
unique minimum and then increases, so it has no interior maximum on
$[\beta_{n-1},\beta_n]$ and attains
its maximum there at an endpoint. Since $\lambda_n(\beta_{n-1})/\beta_{n-1}
=\eta_{n-1}^*$ and $\lambda_n(\beta_n)/\beta_n=\eta_n^*$, we obtain, for
$\beta_{n-1}\leq b\leq\beta_n$,
\[
 \frac{\lambda(b)}b
 =\frac{\lambda_n(b)}b
 \leq\max\{\eta_{n-1}^*,\eta_n^*\}
 =\eta_n^*<\Theta_0 .
\]
On the initial interval, the monotonicity of $\lambda_0(b)/b$ from (HL2) gives
\[
 \frac{\lambda(b)}b\leq\eta_0^*<\Theta_0
 \qquad(0<b\leq\beta_0).
\]
This proves Theorem~\ref{thm:de-gennes-bound}.
\end{proof}

\section{A second, variational proof of the de Gennes bound}
\label{sec:trial-state-proof}

In this section we give a second proof of Theorem~\ref{thm:de-gennes-bound},
and prove Theorem~\ref{thm:quantitative}. The argument is independent of
Sections~\ref{sec:preliminaries}--\ref{sec:crossing-monotonicity}, and of
Theorem~\ref{thm:HL}; in particular, it does not use the strong-field
asymptotics~\eqref{eq:de-gennes-limit}. It is purely variational: we construct,
for each $b\in(0,+\infty)$, a trial function $v_b\in H^1(\D)$ such that
\begin{equation}\label{eq:var-upper}
 \mathcal{Q}_b[v_b] \coloneqq q_b[v_b]-\Theta_0 b\norm{v_b}^2<0 .
\end{equation}
The variational characterization of $\lambda(b)$ then implies
$\lambda(b)<\Theta_0 b$.

The construction is divided into three regimes. Taking inspiration from the
asymptotic analysis of $\lambda(b)$ for large $b$ (see for
instance~\cite{MR2662319}), we build in Section~\ref{subsec:large} a trial
function from the de Gennes ground state $t\mapsto\varphi_0(t)$, and show that it
satisfies~\eqref{eq:var-upper} for all $b\geq130$. For small $b$ a constant
function suffices (Section~\ref{subsec:small}). On the remaining bounded interval
we use trial states from a finite-dimensional space of polynomial functions, and
verify~\eqref{eq:var-upper} on finitely many overlapping intervals by exact
rational computations (Section~\ref{subsec:intermediate}). Throughout we use the
certified numerical bounds
\begin{equation}\label{eq:numerical-constants}
  \abs {\Theta _ 0 -  \num {0.590106125}} \leq 10 ^ {-9},\quad
  \abs {\varphi_0(0) - \num {0.8730}} \leq 10 ^ {-4},
\end{equation}
taken from~\cite[Theorem 1.1]{MR2912745}. In particular
$\tfrac12<\Theta_0<1$.

\subsection{Large magnetic field}\label{subsec:large}

Recall that $\xi_0 > 0$ denotes the minimizer of $\xi\mapsto \mu(\xi)$ in the
definition of $\Theta_0$ in~\eqref{eq:Theta0}, and that $\varphi_0$ is the
corresponding positive normalized eigenfunction of $-d^2/dt^2+(t-\xi_0)^2$ on
$\R_+$ with Neumann boundary condition. At the parameter $\xi=\xi_0$, the
vanishing of $\mu'(\xi_0)$ gives $\Theta_0=\xi_0^2$. We
consider a trial state of the form
\begin{equation}\label{eq:trial-large}
 u_{m,b}(r,\theta)
 =\varphi_0\bigl(\sqrt b\log(1/r)\bigr)
  \frac{e^{-im\theta}}{\sqrt{2\pi}},
 \qquad m\in\N_0.
\end{equation}
The integer parameter $m$ can be interpreted as the angular momentum. A key point
of our construction lies in choosing a suitable value for $m$ as a function of
$b$. To further motivate the definition~\eqref{eq:trial-large}, we point out that
the change of variables $t=\sqrt b\log(1/r)$ is a smooth diffeomorphism mapping
the interval $(0,1]$ to $[0,+\infty)$ in such a way that $t=0$ corresponds to
$r=1$, that is, to the boundary of the domain $\D$.
Moreover, as $r\approx 1$ we have $t \approx \sqrt {b}(1-r)$, so $t$ is locally
proportional to the distance to the boundary. This is the scale that appears in
the asymptotic theory; see~\cite[Section~8.2]{MR2662319}.
Since $\varphi_0$ decays
faster than any power at infinity, $u_{m,b}\in H^1(\D)$. A direct computation,
using the differential equation satisfied by $\varphi_0$, the identity
$\int_0^{+\infty}\varphi_0'(t)^2\dd t=\Theta_0/2$ (see the proof of Lemma~3.2.7
in~\cite{MR2662319}), and $r\dd r = b^{-1/2}e^{-2t/\sqrt{b}}\dd t$, yields
\[
  \begin{aligned}
    q_{b}[u_{m,b}]
    &
    =
    \frac {\sqrt {b}\xi _ 0 ^ 2}{2}
    + \frac {m ^ 2}{\sqrt {b}}
    + \int _ 0 ^ {+\infty}
      \Bigl[
        \frac {b ^ {3/2}}{4} e ^ {-4t/\sqrt {b}} \varphi_0(t) ^ 2
        - m\sqrt {b} e ^ {-2t/\sqrt {b}} \varphi_0(t) ^ 2
      \Bigr] \dd t ,
      \\
      \norm {u_{m,b}} ^ 2
  &
  =
  \int _ 0 ^ {+\infty} \frac {1}{\sqrt {b}} e ^ {-2t/\sqrt {b}} \varphi_0(t) ^ 2 \dd t.
  \end{aligned}
\]
We use the elementary estimate
\[
  1 - \tau  + \frac {\tau ^ 2}{2} - \frac {\tau ^ 3}{6}
  <
  e ^ {-\tau}
  <
  1 - \tau  + \frac {\tau ^ 2}{2} - \frac {\tau ^ 3}{6} + \frac {\tau ^ 4}{24}
  \quad \text{for all } \tau > 0
\]
to bound the exponential functions by polynomials in $t$, the lower bound being
used for the two terms that enter with a negative sign. We obtain the upper
bound
\[
  \begin{aligned}
    \mathcal{Q}_b[u_{m,b}]
    &
    \leq
    \frac {b ^ {1/2}\xi _ 0 ^ 2}{2}
    + m ^ 2 b ^ {-1/2}
    + \frac {1}{4} b ^ {3/2}
    - b T _ 1
    + 2 b ^ {1/2} T _ 2
    - \frac {8}{3} T _ 3
    + \frac {8}{3} b ^ {-1/2} T _ 4
    \\
    & \qquad
    - m b ^ {1/2}
    + 2 m T _ 1
    - 2 m b ^ {-1/2} T _ 2
    + \frac {4}{3} m b ^ {-1} T _ 3
    \\
    & \qquad
    - \xi _ 0 ^ 2 b
    \Bigl(
      b ^ {-1/2}
      - 2 b ^ {-1} T _ 1
      + 2 b ^ {-3/2} T _ 2
      - \frac {4}{3} b ^ {-2} T _ 3
    \Bigr),
  \end{aligned}
\]
where $T_k$ denotes the $k$-th moment of $\varphi_0^2$, defined by
$T_k \coloneqq \int_0^{+\infty} t^k\varphi_0(t)^2\dd t$. The right-hand side is a
quadratic polynomial in the angular momentum \( m \), which attains its minimum
at \( m = m _ {\mathrm {opt}}\), where
\[
  m _ {\mathrm {opt}}
  =
  \frac {b}{2} - T _ 1 b ^ {1/2} + T _ 2 - \frac {2}{3} T _ 3 b ^ {-1/2}.
\]
Since \( m \) has to be an integer, we cannot always use this value, but we can
choose an integer \( \underline m(b) = m _ {\mathrm {opt}} + \epsilon \), with
\( \abs {\epsilon} \leq 1/2\) ($\underline m(b)$ is unique except when
$m _ {\mathrm {opt}}$ belongs to $1/2+\N_0$). We now assume
$m=\underline m(b)$ and insert the values of the moments
\[
  T _ 1 = \xi _ 0,\quad
  T _ 2 = \frac {3}{2}\xi _ 0 ^ 2,\quad
  T _ 3 = \frac {C _ 1}{2} + \frac {5}{2} \xi _ 0 ^3,\quad
  T _ 4 = \frac {3}{8} + \frac {35}{8}\xi _ 0 ^ 4 + \frac {7}{8} C _ 1 \xi _ 0,
\]
computed using~\cite[Lemma~3.2.7 and Equation~(3.54)]{MR2662319}, with the
constant $C_1 = \varphi_0(0)^2/3$. Note that, for $b\geq4\Theta_0=4\xi_0^2$, one
has $b/2-\xi_0\sqrt b\geq0$ and $\frac23T_3b^{-1/2}\leq T_3/(3\xi_0)$,
whence
\[
 m_{\mathrm{opt}}\geq\frac23\xi_0^2-\frac{C_1}{6\xi_0}>0
\]
because $4\xi_0^3>C_1$ by~\eqref{eq:numerical-constants}; in particular
$\underline m(b)\in\N_0$, as required in~\eqref{eq:trial-large}. We discard the
term in $b^{-3/2}$ on the right-hand side, which appears with a negative
coefficient. We finally obtain
\begin{equation}\label{eq:var-upper-2}
\begin{aligned}
  \mathcal{Q}_b[u_{\underline m(b),b}]
  &\leq
  - C _ 1
  + \Bigl(\frac {5}{4} + \frac {5}{3} C _ 1 \xi _ 0
  + \frac {37}{12} \xi _ 0 ^ 4\Bigr) b ^ {-1/2}
  + \frac {5}{3} \xi _ 0 ^ 2\bigl(C _ 1 + 5 \xi _ 0 ^ 3\bigr)b ^ {-1}
  \\
  &\eqqcolon -C_1 + B_1 b ^ {-1/2} + B_2 b ^ {-1},
  \end{aligned}
\end{equation}
where $C_1$, $B_1$ and $B_2$ are the positive constants of
Theorem~\ref{thm:quantitative}. The right-hand side of~\eqref{eq:var-upper-2} is
negative if \( b > b _ 0 \), with
\begin{equation}\label{eq:b0}
  b _ 0 \coloneqq \frac {\bigl(B_1 + \sqrt {B_1 ^ 2 + 4C_1B_2}\bigr) ^ 2} {4C_1 ^ 2}.
\end{equation}
Using~\eqref{eq:numerical-constants} and exact rational computations, we deduce
the upper bound $b_0<130$. We have thus proved the following.
\begin{lemma}\label{lem:largeB}
  If \( b \ge 130 \), then \(\lambda (b) < \Theta _ 0 b\).
\end{lemma}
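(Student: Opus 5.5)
The plan is to combine the preceding computation with the variational characterization of $\lambda(b)$. Fix $b\ge130$ and take the trial state $v_b=u_{\underline m(b),b}$ of~\eqref{eq:trial-large}, with the integer $\underline m(b)$ chosen within $1/2$ of $m_{\mathrm{opt}}$. Since $130>4\Theta_0$ by~\eqref{eq:numerical-constants}, the discussion above gives $\underline m(b)\in\N_0$, so $v_b$ is an admissible element of $H^1(\D)$. It is nonzero because $\varphi_0>0$.

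The first step is to check the sign of the bound~\eqref{eq:var-upper-2}. I would study $p(s)=-C_1s^2+B_1s+B_2$ with $s=\sqrt b$; the right-hand side of~\eqref{eq:var-upper-2} equals $p(\sqrt b)/b$. This downward-opening quadratic has its positive root at $s_0=\bigl(B_1+\sqrt{B_1^2+4C_1B_2}\bigr)/(2C_1)$, and $s_0^2=b_0$ as in~\eqref{eq:b0}. Hence $p(s)<0$ for $s>s_0$, so $\mathcal Q_b[v_b]<0$ whenever $b>b_0$. The second step is the certified inequality $b_0<130$. It suffices to show $p(\sqrt{130})<0$, because $p$ is decreasing beyond its vertex and the vertex lies below $s_0$. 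To certify this, I would replace $\Theta_0$, $\xi_0=\sqrt{\Theta_0}$ and $C_1=\varphi_0(0)^2/3$ by rational interval enclosures obtained from~\eqref{eq:numerical-constants}. I would then bound $B_1$ and $B_2$ from above and $C_1$ from below, and evaluate $-130\,C_1+\sqrt{130}\,B_1+B_2$ using a rational upper bound for $\sqrt{130}$. Numerically the result is about $-33.0+30.2+2.5<0$, so the margin is small but the sign is clear with these enclosures.

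Finally, with $\mathcal Q_b[v_b]<0$, the min--max principle gives
\[
\lambda(b)\le \frac{q_b[v_b]}{\norm{v_b}^2}<\Theta_0 b .
\]

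I expect the main obstacle to be the rigor of the displayed upper bound for $\mathcal Q_b[u_{m,b}]$, rather than the final arithmetic. Two points need care there. First, the polynomial bounds for $e^{-\tau}$ must be applied in the correct direction for each term, according to the sign of its coefficient; this sign bookkeeping is also what determines which $T_k$ terms survive. Second, substituting $m=m_{\mathrm{opt}}+\epsilon$ with $\abs{\epsilon}\le1/2$ into the quadratic in $m$ produces an extra term. It is at most $\epsilon^2b^{-1/2}\le\tfrac14b^{-1/2}$ (the coefficient of $m^2$ is $b^{-1/2}$, up to the rescaling by $\norm{\cdot}^2$) and must be absorbed into $B_1$. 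The $b^{-3/2}$ remainder is discarded because its coefficient is negative. I would recheck each of these contributions against the stated moments $T_1,\dots,T_4$ before trusting the constants $B_1$ and $B_2$.
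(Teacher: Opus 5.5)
Your proposal is correct and follows the paper's route: the de Gennes trial state with $m=\underline m(b)$, the bound~\eqref{eq:var-upper-2}, negativity for $b>b_0$, and the certified inequality $b_0<130$. Checking $p(\sqrt{130})<0$ is equivalent to bounding $b_0$ directly, since $p(0)=B_2>0$ and $s_0$ is the only positive root. The term $\epsilon^2b^{-1/2}\le\frac14 b^{-1/2}$ you flag is indeed accounted for in the paper: it is the extra $\frac14$ in the constant $\frac54$ of $B_1$, and the remaining coefficients check out exactly against the moments $T_1,\dots,T_4$.
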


\subsection{Small magnetic field}\label{subsec:small}

Using a constant function as a trial state, we find that
\( \lambda (b) \leq b ^ 2/8\), and so \(\lambda (b) < \Theta _ 0 b\)
for \( 0 < b < 8\Theta _ 0 \). Since \(\Theta _ 0 > 1/2\)
by~\eqref{eq:numerical-constants}, we have the following.
\begin{lemma}\label{lem:smallB}
  If \(0 < b \le 4 \), then \(\lambda (b) < \Theta _ 0 b\).
\end{lemma}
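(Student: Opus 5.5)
We take the constant function $v\equiv1$ on $\D$ as a trial state in the variational characterization of $\lambda(b)$. It lies in $H^1(\D)$ and has $\nabla v=0$, so
\[
 q_b[v]=b^2\int_{\D}\abs{\A}^2\dd\mathbf{x}
 =\frac{b^2}{4}\int_{\D}\abs{\mathbf{x}}^2\dd\mathbf{x}
 =\frac{b^2}{4}\cdot2\pi\int_0^1r^3\dd r
 =\frac{\pi b^2}{8}.
\]
Since $\norm{v}^2=\pi$, the Rayleigh quotient equals $b^2/8$. The min--max principle then gives $\lambda(b)\le b^2/8$ for every $b>0$. Alternatively, one can work in the sector $n=0$ with $f\equiv1$, which gives $q_{0,b}[1]=\frac{b^2}{4}\int_0^1r^3\dd r=b^2/16$ against $\norm{1}^2=1/2$, the same quotient.

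For $0<b<8\Theta_0$ we have $b^2/8<\Theta_0 b$, because this is equivalent to $b<8\Theta_0$ after dividing by $b>0$. The certified bound~\eqref{eq:numerical-constants} gives $\Theta_0>\tfrac12$, so $8\Theta_0>4$. Hence every $b\in(0,4]$ satisfies $b<8\Theta_0$, and therefore
\[
 \lambda(b)\le\frac{b^2}{8}<\Theta_0 b .
\]
In the notation of~\eqref{eq:var-upper}, this says $\mathcal Q_b[1]=\pi b(b/8-\Theta_0)<0$.

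There is no real obstacle here. The only points needing care are the normalization of the area element, giving $\int_{\D}\abs{\mathbf{x}}^2\dd\mathbf{x}=\pi/2$, and relying on the certified lower bound $\Theta_0>1/2$ instead of an approximate numerical value.
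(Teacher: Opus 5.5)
Your proof is correct and is essentially the paper's argument: the constant trial state gives $\lambda(b)\le b^2/8$, which is below $\Theta_0 b$ for $b<8\Theta_0$, and the certified bound $\Theta_0>\tfrac12$ covers $(0,4]$. Your explicit computation of $\int_{\D}\abs{\A}^2\dd\mathbf{x}=\pi/8$ fills in the detail the paper leaves implicit.
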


\subsection{Intermediate magnetic field}\label{subsec:intermediate}

We use a different approach for the remaining range $4<b<130$; the construction
below covers, with some margin, the larger interval $[3,131]$. For a fixed
angular momentum $m\geq1$, we define the vector space of trial functions
\[
\mathcal P_m \coloneqq  \biggl\{
 r^m\sum_{j=0}^8 c_j (1-r^2)^j \frac{e^{-im\theta}}{\sqrt{2\pi}}
 :c_0,\ldots,c_8\in\R
 \biggr\}\subset H^1(\D).
\]
We parametrize the functions in $\mathcal P_m$ by vectors
$c=(c_0,\dots,c_8)\in \R^9$, using the notation
\[
  u_c(r\cos(\theta),r\sin(\theta))
  \coloneqq \sum_{j=0}^8 c _ j \psi _ j (r)\frac{e^{-im\theta}}{\sqrt{2\pi}},
\]
where $\psi _ j (r) = r ^ m (1 - r ^ 2) ^ j$. Denoting by
$\innerproduct{\cdot,\cdot}$ the inner product of $L^2((0,1),r\dd r)$, and by
$\innerproduct{\cdot,\cdot}_{\R^9}$ the Euclidean one, there exist real symmetric
positive-definite $9\times9$ matrices $\mathsf{M}$, $\mathsf{K}$ and $\mathsf{V}$
(which can be interpreted as mass, kinetic, and potential matrices, respectively)
such that
\begin{equation}\label{eq:matrix-forms}
  \norm {u_c} ^ 2 = \innerproduct{c,\mathsf{M}c}_{\R^9},\quad
  q_b[u_c] = \innerproduct{c,\mathsf{K}c}_{\R^9}
  - mb\innerproduct{c,\mathsf{M}c}_{\R^9}
  + b ^ 2 \innerproduct{c,\mathsf{V}c}_{\R^9}.
\end{equation}
Explicitly,
\[
    \mathsf{M} _ {j,k}
    = \innerproduct {\psi _ j,\psi _ k},\qquad
    \mathsf{K} _ {j,k}
    = \innerproduct {\psi _ j ',\psi _ k '}
        + \innerproduct {(m^2/r^2) \psi _ j,\psi _ k},\qquad
    \mathsf{V} _ {j,k}
    =  \innerproduct {(r^2/4) \psi _ j,\psi _ k}.
\]
These matrix entries can be computed explicitly using the Euler beta function,
and are all rational numbers.

We now define, for $u\in H^1(\D)$,
\[
  \mathcal{Q}_b^*[u]\coloneqq q_b[u]-\Theta_*b\norm{u}^2,
\]
where $\Theta_*=5901/10000<\Theta_0$ by~\eqref{eq:numerical-constants}. As
above, given any $b>0$, if there exists a trial function $v\in H^1(\D)$ such
that $\mathcal{Q}_b^*[v]<0$, it follows from the variational characterization of
$\lambda(b)$ that $\lambda(b)<\Theta_*b<\Theta_0b$.

Given a coefficient vector $c\in \R^9$, it follows from~\eqref{eq:matrix-forms}
and from the properties of the matrices $\mathsf{M}$, $\mathsf{K}$ and
$\mathsf{V}$ that
\[
 p_{m,c}(b) \coloneqq \mathcal{Q}_b^*[u_c]=q_b[u_c]-\Theta_*b\norm{u_c}^2
\]
is a convex quadratic polynomial in $b$, its leading coefficient being
$\innerproduct{c,\mathsf{V}c}_{\R^9}>0$. The whole of the certification rests on the
following elementary observation.

\begin{lemma}[Certification]\label{lem:certification}
Let $m\geq1$ be an integer and let $c\in\Q^9\setminus\{0\}$. If $b_{\min}$ and
$b_{\max}$ are positive integers with $b_{\min}<b_{\max}$,
$p_{m,c}(b_{\min})<0$ and $p_{m,c}(b_{\max})<0$, then
\[
 \lambda(b)<\Theta_*b<\Theta_0b
 \qquad\text{for every } b\in[b_{\min},b_{\max}].
\]
Moreover $p_{m,c}$ has rational coefficients, so that
$p_{m,c}(b_{\min})$ and $p_{m,c}(b_{\max})$ are rational numbers and the two
hypotheses can be checked exactly by rational arithmetic.
\end{lemma}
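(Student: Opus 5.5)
The argument rests on convexity of $p_{m,c}$ in $b$, together with the variational characterization of $\lambda(b)$.

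\emph{Step 1: $p_{m,c}$ is a convex quadratic with rational coefficients.} Fix $m\geq1$ and $c\in\Q^9\setminus\{0\}$. By~\eqref{eq:matrix-forms},
\[
 p_{m,c}(b)=\innerproduct{c,\mathsf{K}c}_{\R^9}-(m+\Theta_*)b\innerproduct{c,\mathsf{M}c}_{\R^9}+b^2\innerproduct{c,\mathsf{V}c}_{\R^9}.
\]
\begin{itemize}
\item[] The leading coefficient $\innerproduct{c,\mathsf{V}c}_{\R^9}=\frac14\int_0^1 r^2\abs{\sum_j c_j\psi_j}^2 r\dd r$ is strictly positive. Indeed, the $\psi_j$ are linearly independent, since $(1-r^2)^j$ are independent polynomials, so $\sum_j c_j\psi_j\not\equiv0$.
\item[] Hence $p_{m,c}$ is strictly convex on $\R$.
\end{itemize}

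\emph{Step 2: negativity on the whole interval.} For $b=(1-\tau)b_{\min}+\tau b_{\max}$ with $\tau\in[0,1]$, convexity gives
\[
 p_{m,c}(b)\leq(1-\tau)p_{m,c}(b_{\min})+\tau p_{m,c}(b_{\max})<0.
\]
Since $u_c\neq0$ lies in $H^1(\D)$, the min--max principle yields
\[
 \lambda(b)\leq \frac{q_b[u_c]}{\norm{u_c}^2}=\Theta_*b+\frac{p_{m,c}(b)}{\norm{u_c}^2}<\Theta_*b.
\]
Finally, $\Theta_*<\Theta_0$ by~\eqref{eq:numerical-constants}.

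\emph{Step 3: rationality.} It remains to check that $\mathsf{M}$, $\mathsf{K}$ and $\mathsf{V}$ have rational entries.
\begin{itemize}
\item[] Expanding $(1-r^2)^j$ by the binomial theorem, each entry of $\mathsf{M}$ and $\mathsf{V}$ is a finite rational combination of integrals $\int_0^1 r^{2m+2k+1}\dd r$ or $\int_0^1 r^{2m+2k+3}\dd r$, and these are rational.
\item[] For $\mathsf{K}$, the functions $\psi_j'$ and $(m/r)\psi_j$ are $r^{m-1}$ times polynomials in $r^2$ with integer coefficients. Their products against $r\dd r$ therefore integrate to rationals, with all exponents $\geq 2m-1\geq1$, so no singularity arises.
\item[] Equivalently, these are Beta values $B(a,b)$ with positive integer arguments.
\end{itemize}
With $\Theta_*\in\Q$ and $c\in\Q^9$, all three coefficients of $p_{m,c}$ are rational. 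Evaluating at the integers $b_{\min}$ and $b_{\max}$ therefore gives rationals whose signs can be decided exactly.

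No step is a serious obstacle. The only point needing care is the strict positivity of the leading coefficient, which rests on the linear independence of the $\psi_j$. Strict convexity is in fact not even needed, since mere convexity already gives the chord bound.
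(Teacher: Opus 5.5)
Your proposal is correct and follows the same route as the paper: convexity of $p_{m,c}$ gives negativity on $[b_{\min},b_{\max}]$ from the two endpoint values, the Rayleigh quotient then yields $\lambda(b)<\Theta_*b<\Theta_0b$, and rationality follows from the rational (Beta-function) entries of $\mathsf{M}$, $\mathsf{K}$, $\mathsf{V}$ together with $c\in\Q^9$ and $\Theta_*\in\Q$. You also spell out details the paper leaves implicit: $u_c\neq0$ by linear independence of the $\psi_j$, and the $\mathsf{K}$ entries involve no singular integrals since $m\geq1$.
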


\begin{proof}
The polynomial $p_{m,c}$ is convex, hence negative on $[b_{\min},b_{\max}]$ as
soon as it is negative at the two endpoints; the conclusion follows from the
variational characterization of $\lambda(b)$ recalled above. The coefficients of
$p_{m,c}$ are rational because the entries of $\mathsf{M}$, $\mathsf{K}$ and
$\mathsf{V}$ are rational, $c\in\Q^9$ and $\Theta_*\in\Q$.
\end{proof}

Our strategy therefore is to construct a finite family of intervals
$[b_{\min},b_{\max}]$ with integer endpoints, each with its corresponding
rational vector $c$ satisfying the hypotheses of
Lemma~\ref{lem:certification}, in such a way that the union of the intervals
covers $[3,131]$.

To generate such a family we proceed as follows, with all the computations done
in Mathematica. For each $m=1,\ldots,56$, we define
$b _ {\mathrm {ini}} \coloneqq 2m + 2.25\sqrt{m}$ --- a choice motivated by
Remark~\ref{rem:beta-asymptotics} --- and compute a vector $c\in \R^9$ that
minimizes the Rayleigh quotient
\[
	\frac{q_{b _ {\mathrm {ini}}}[u_c]}{\norm{u_c}^2}
  =
  \frac
  {\innerproduct{c,\mathsf{K}c}_{\R^9}
  - mb _ {\mathrm {ini}}\innerproduct{c,\mathsf{M}c}_{\R^9}
  + b _ {\mathrm {ini}} ^ 2 \innerproduct{c,\mathsf{V}c}_{\R^9}}
  {\innerproduct{c,\mathsf{M}c}_{\R^9}}.
\]
This is done by computing an eigenvector associated with the lowest eigenvalue of
the matrix $\mathsf{K}-mb _ {\mathrm {ini}}\mathsf{M}+b _ {\mathrm {ini}}^2\mathsf{V}$ relative to the
matrix $\mathsf{M}$, and normalizing by the condition $c_0=1$. We then find the roots
$b_-<b_+$ of the polynomial $p_{m,c}$. All the computations up to this point are
performed in floating-point arithmetic; they serve only to produce candidate data
and play no role in the proof. We finally define $b_{\min}^{m}$ as the smallest
integer greater than or equal to $b_-$, $b_{\max}^{m}$ as the largest integer
less than or equal to $b_+$, and $c_m$ as the rationalization of $c$ with
granularity $10^{-1}$. It is not clear a priori that
$0<b_{\min}^{m}<b_{\max}^{m}$, nor that $p_{m,c_m}$ is negative at those two
points. The parameters in this construction, namely the range of values for the
angular momentum \(m\), the dimension of the vector space $\mathcal P_m$ and the
granularity of the rationalization, had to be adjusted by trial and error to make
these properties hold. The procedure yields a family of intervals
\[
  \left\{[b_{\min}^m,b_{\max}^m] : m =1,\dots,56 \right\}
\]
which overlap (or abut) and cover $[3,131]$, and each of which satisfies the
hypotheses of Lemma~\ref{lem:certification} with the vector $c_m$. In fact, one
can construct a smaller family of intervals that covers $[3,131]$ by retaining
only some values of $m$ (see Appendix~\ref{sec:details} for details). This yields
the following result.

\begin{lemma}\label{lem:intermediateB}
  If \(  3 \le b \le 131 \), then \(\lambda (b) < \Theta _ 0 b\).
\end{lemma}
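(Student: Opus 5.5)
The statement is a finite certification. The plan is to exhibit a finite list of triples $(m,b_{\min},b_{\max})$ with integer endpoints, each with a rational coefficient vector $c\in\Q^9$, such that the intervals $[b_{\min},b_{\max}]$ cover $[3,131]$. For each triple we check $p_{m,c}(b_{\min})<0$ and $p_{m,c}(b_{\max})<0$ in exact rational arithmetic. Lemma~\ref{lem:certification} then gives $\lambda(b)<\Theta_*b<\Theta_0b$ on each interval, and taking the union proves the claim. The inequality $\Theta_*<\Theta_0$ comes from~\eqref{eq:numerical-constants}, since $0.5901<0.590106125-10^{-9}$.

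The first step is to make the exact data explicit. For $\psi_j=r^m(1-r^2)^j$, the substitution $s=r^2$ turns every entry into a Beta integral with integer arguments, hence into a rational number. Concretely, $\mathsf M_{jk}=\tfrac12 B(m+1,j+k+1)$, $\mathsf V_{jk}=\tfrac18 B(m+2,j+k+1)$, and $\mathsf K_{jk}$ is a finite combination of such terms. Expanding $\psi_j'$ gives
\[
\psi_j'\psi_k'+\frac{m^2}{r^2}\psi_j\psi_k=r^{2m-2}\bigl[2m^2(1-r^2)^{j+k}-2m(j+k)r^2(1-r^2)^{j+k-1}+4jkr^4(1-r^2)^{j+k-2}\bigr],
\]
which integrates against $r\,\dd r$ to Beta values. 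With these entries in hand,
\[
p_{m,c}(b)=\langle c,\mathsf Kc\rangle-(m+\Theta_*)b\langle c,\mathsf Mc\rangle+b^2\langle c,\mathsf Vc\rangle
\]
is an explicit rational quadratic in $b$, and its convexity is immediate from $\langle c,\mathsf Vc\rangle>0$.

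The second step produces the candidates. For $m=1,\dots,56$ we follow the heuristic described above: take $b_{\mathrm{ini}}=2m+2.25\sqrt m$, compute the lowest generalized eigenvector in floating point, normalize by $c_0=1$, and round to granularity $10^{-1}$. We then take $b_{\min}^m=\lceil b_-\rceil$ and $b_{\max}^m=\lfloor b_+\rfloor$. Only the rounded rational data enter the proof.

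The third step is the verification. For each $m$ we evaluate the two rational numbers $p_{m,c_m}(b^m_{\min})$ and $p_{m,c_m}(b^m_{\max})$ exactly and confirm that both are negative. We then check combinatorially that the intervals, sorted by endpoint, chain together, meaning each $b_{\min}$ is at most the previous $b_{\max}$. We also check that the first interval starts at $b_{\min}\le3$ and the last ends at $b_{\max}\ge131$. Optionally we prune to a subfamily that still covers. The data and code belong in Appendix~\ref{sec:details}.

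The main obstacle is the coverage near the ends of the range, not the exact arithmetic. At small $m$ the certified intervals must reach down to $3$, where $\lambda(b)/b$ is relatively far below $\Theta_0$ but angular momentum $m\ge1$ is less natural. At large $m$ the margin $\Theta_0b-\lambda(b)$ shrinks like $C_1\sqrt b$, so the windows become narrow and rounding to $10^{-1}$ may destroy negativity. If a gap appears, I would first shift $b_{\mathrm{ini}}$ toward the gap and recompute. Failing that, I would refine the rational granularity or enlarge the polynomial degree beyond $8$. Since Lemma~\ref{lem:smallB} already covers $b\le4$, a failure just below $4$ is harmless.
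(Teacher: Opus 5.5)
Your proposal is correct and is the paper's argument: the same nine-coefficient spaces $\mathcal P_m$, the same floating-point heuristic ($m=1,\dots,56$, $b_{\mathrm{ini}}=2m+2.25\sqrt m$, normalization $c_0=1$, granularity $10^{-1}$) used only to produce candidates, and exact rational evaluation of $p_{m,c_m}$ at integer endpoints fed into Lemma~\ref{lem:certification}; the covering subfamily and the negative endpoint values are recorded in Tables~\ref{tab:intervaltable}--\ref{tab:results}, so none of your contingency fixes are needed. One small slip in your heuristic remark: the absolute margin $\Theta_0b-\lambda(b)$ grows like $C_1\sqrt b$, and it is the normalized margin $\Theta_0-\lambda(b)/b\approx C_1/\sqrt b$ that shrinks.
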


It is worth noting that validating Lemma~\ref{lem:intermediateB} requires only a
list of intervals with integer endpoints and corresponding rational vectors,
together with a finite number of exact rational computations, by
Lemma~\ref{lem:certification}, whatever the method used to produce the list. We
give our list in Tables~\ref{tab:intervaltable} and~\ref{tab:mandvec} in
Appendix~\ref{sec:details}. For completeness, we also include the Mathematica
code which implements the procedure described above.

\subsection{Proof of Theorems~\ref{thm:de-gennes-bound}
and~\ref{thm:quantitative}}

Lemma~\ref{lem:largeB}, Lemma~\ref{lem:smallB} and
Lemma~\ref{lem:intermediateB} cover, respectively, $[130,+\infty)$, $(0,4]$ and
$[3,131]$, whose union is $(0,+\infty)$. Together they therefore give a second
proof of Theorem~\ref{thm:de-gennes-bound}.

\begin{proof}[Proof of Theorem~\ref{thm:quantitative}]
For the trial
state~\eqref{eq:trial-large} we have, on the one hand, $e^{-2t/\sqrt b}<1$ and
hence
\[
 \norm {u_{m,b}} ^ 2
 =\int _ 0 ^ {+\infty} \frac {1}{\sqrt {b}} e ^ {-2t/\sqrt {b}} \varphi_0(t) ^ 2 \dd t
 <\frac1{\sqrt b},
\]
and, on the other hand, by $e^{-\tau}>1-\tau$,
\[\begin{aligned}
\norm {u_{m,b}} ^ 2
   & >
  \int _ 0 ^ {+\infty} \frac {1}{\sqrt {b}}\left(1-\frac{2t}{\sqrt b}\right) \varphi_0(t) ^ 2 \dd t
   = \frac{1}{\sqrt b}-\frac{2}{b}T_1=\frac{1}{\sqrt{b}}\left(1-\frac{2\xi_0}{\sqrt b}\right).
  \end{aligned}
   \]
Let $b>4\Theta_0$, so that $1-2\xi_0b^{-1/2}>0$, and write
$u=u_{\underline m(b),b}$. Then
\[
 \lambda(b)\leq\frac{q_b[u]}{\norm u^2}
 =\Theta_0b+\frac{\mathcal{Q}_b[u]}{\norm u^2}
 \leq\Theta_0b+\frac{-C_1}{\norm u^2}+\frac{B_1b^{-1/2}+B_2b^{-1}}{\norm u^2},
\]
by~\eqref{eq:var-upper-2}. In the second term the numerator is negative, so we
insert the \emph{upper} bound for $\norm u^2$; in the third term the numerator
is positive, so we insert the \emph{lower} bound. This gives
\[
 \lambda(b)
 <\Theta_0b-C_1\sqrt b
 +\frac{B_1+B_2b^{-1/2}}{1-2\xi_0b^{-1/2}},
\]
which is the first assertion of Theorem~\ref{thm:quantitative}. If moreover
$b\geq130>b_0$, then $-C_1+B_1b^{-1/2}+B_2b^{-1}<0$ by~\eqref{eq:b0}, and the
upper bound $\norm u^2<b^{-1/2}$ applied to the whole of $\mathcal{Q}_b[u]$ gives
directly
\[
 \lambda(b)<\Theta_0b+\sqrt b\bigl(-C_1+B_1b^{-1/2}+B_2b^{-1}\bigr)
 =\Theta_0b-C_1\sqrt b+B_1+B_2b^{-1/2},
\]
which is the second assertion.
\end{proof}

\section*{Acknowledgments}

C.~L. acknowledges support from the INdAM GNAMPA Project \emph{Functional and
spectral analysis for differential operators} (CUP E53C25002010001) and thanks
the Isaac Newton Institute for Mathematical Sciences, Cambridge, for support
and hospitality during the programme \emph{Geometric spectral theory and
applications} (EPSRC grant EP/Z000580/1), where part of this work was carried
out.

Both authors would like to thank Søren Fournais, Bernard Helffer, Ayman Kachmar
and Germán Miranda for fruitful discussions. OpenAI's Codex tool gave valuable
suggestions regarding the English language and the organization of the text.

\appendix

\section{Program and tables}\label{sec:details}

Here we collect the Mathematica code used for the calculations and the tables
containing their results.

\begin{verbatim}
terms = 8;
granularity = 10^(-1);

matM[m_] =
  Table[
    1/2 Beta[m + 1, j + k + 1],
    {j, 0, terms},{k, 0, terms}
  ];

matK[m_] =
  Table[
    1/2(2 m^2 Beta[m, j + k + 1]
    - 2m (j + k) If[j + k > 0, Beta[m + 1, j + k], 0]
    + 4j k If[j + k > 1, Beta[m + 2, j + k - 1], 0]),
    {j, 0, terms}, {k, 0, terms}
  ];

matV[m_] =
  Table[
    1/8 Beta[m + 2, j + k + 1],
    {j, 0, terms}, {k, 0, terms}
  ];

smalleig[b_, m_] :=
  Eigensystem[
    {N[matK[m] - m b matM[m] + b^2 matV[m]], N[matM[m]]}, -1
  ];

findInterval[m_] :=
  Module[
    {eigs, vec, pol, btab, bleft, bright,
     rvec, leftval, rightval},
    eigs = smalleig[2*m + 2.25*Sqrt[m], m];
    vec = eigs[[2,1]]/eigs[[2,1,1]];
    pol = Collect[
            Dot[
             {{vec}}.(matK[m] - (m + 5901/10000) b matM[m]
                                 + b^2 matV[m]),
             vec
             ], b][[1,1]];
    btab     = SolveValues[pol==0,b];
    bleft    = Ceiling[btab[[1]]];
    bright   = Floor[btab[[2]]];
    rvec     = Rationalize[vec, granularity];
    leftval  = Dot[
                {{rvec}}.(matK[m]
                  - (m + 5901/10000) bleft matM[m]
                  + bleft^2 matV[m] ),
                rvec
                ][[1,1]];
    rightval = Dot[
                {{rvec}}.(matK[m]
                  - (m + 5901/10000) bright matM[m]
                  + bright^2 matV[m] ),
                rvec
                ][[1,1]];
    {m, bleft, bright, leftval, rightval, vec,rvec}
  ];

Table[findInterval[m], {m, 1, 56}]//TableForm
\end{verbatim}

After running the previous code to generate the intervals corresponding to each
integer $m$ between $1$ and $56$, we manually selected a subfamily of them that
still overlaps and covers $[3,131]$, as illustrated by the following table.

\begin{longtblr}[
  caption={Table of the overlapping intervals and the corresponding angular momentum
  \(m\). The endpoints are the \texttt {bleft} and \texttt {bright} in the Mathematica
  output.},
  label=tab:intervaltable]
  {colspec={lp{2cm}lp{2cm}lp{2cm}ll}}
  \toprule
  \( m\) & \([b _ {\mathrm {left}},b _ {\mathrm {right}}]\) &
  \( m\) & \([b _ {\mathrm {left}},b _ {\mathrm {right}}]\) &
  \( m\) & \([b _ {\mathrm {left}},b _ {\mathrm {right}}]\) &
  \( m\) & \([b _ {\mathrm {left}},b _ {\mathrm {right}}]\) \\
  \midrule
 \( 1\) & \([ 3, 7]\) &  \(13\) & \([32,37]\) & \(29\) & \([ 68, 73]\) & \(46\) & \([105,110]\) \\
 \( 2\) & \([ 6,10]\) &  \(15\) & \([37,42]\) & \(31\) & \([ 72, 78]\) & \(48\) & \([109,114]\) \\
 \( 3\) & \([ 9,13]\) &  \(17\) & \([41,46]\) & \(33\) & \([ 76, 82]\) & \(50\) & \([113,119]\) \\
 \( 4\) & \([11,15]\) &  \(19\) & \([46,51]\) & \(35\) & \([ 81, 86]\) & \(52\) & \([118,123]\) \\
 \( 5\) & \([14,18]\) &  \(21\) & \([50,55]\) & \(37\) & \([ 85, 91]\) & \(54\) & \([122,127]\) \\
 \( 7\) & \([18,23]\) &  \(23\) & \([54,60]\) & \(39\) & \([ 89, 95]\) & \(56\) & \([126,131]\) \\
 \( 9\) & \([23,28]\) &  \(25\) & \([59,64]\) & \(41\) & \([ 94,100]\) &        &  \\
 \(11\) & \([28,32]\) &  \(27\) & \([63,69]\) & \(44\) & \([100,106]\) &        &  \\
  \bottomrule
\end{longtblr}

\begin{longtblr}[
  caption={The rationalized coefficient vectors used for each relevant \(m\), called
  \texttt {rvec} in the Mathematica code.},
  label=tab:mandvec]{colspec={Q[l,$]Q[l,$]}}
  \toprule
    m & c _ {\mathrm {rat}} \\
  \midrule
  1 & (1 , \frac12 , \frac13 , 0 , 0 , 0 , 0 , 0 , 0) \\ 2 & (1 , 1 , \frac45 , \frac12 , \frac15 , 0 , 0 , 0 , 0) \\ 3 & (1 , \frac32 , \frac53 , \frac54 , \frac23 , \frac13 , \frac15 , 0 , 0) \\ 4 & (1 , 2 , \frac{11}{4} , \frac83 , 2 , 1 , \frac45 , -\frac18 , \frac13) \\ 5 & (1 , \frac52 , \frac{21}{5} , \frac{14}{3} , \frac{13}{3} , \frac73 , \frac{10}{3} , -\frac87 , \frac53) \\ 7 & (1 , \frac72 , \frac{23}{3} , \frac{34}{3} , 15 , \frac92 , \frac{218}{7} , -\frac{68}{3} , 23) \\ 9 & (1 , \frac92 , \frac{61}{5} , \frac{65}{3} , \frac{128}{3} , -\frac{57}{4} , 186 , -\frac{572}{3} , \frac{695}{4}) \\ 11 & (1 , \frac{11}{2} , \frac{71}{4} , \frac{71}{2} , \frac{215}{2} , -\frac{273}{2} , 814 , -\frac{5109}{5} , \frac{1809}{2}) \\ 13 & (1 , \frac{13}{2} , \frac{49}{2} , \frac{155}{3} , 250 , -\frac{1742}{3} , \frac{14189}{5} , -\frac{12286}{3} , 3636) \\ 15 & (1 , \frac{15}{2} , \frac{65}{2} , 67 , \frac{2161}{4} , -\frac{5438}{3} , \frac{24976}{3} , -13371 , \frac{36305}{3}) \\ 17 & (1 , \frac{17}{2} , \frac{167}{4} , 77 , \frac{3269}{3} , -\frac{9385}{2} , 21356 , -\frac{149829}{4} , \frac{69719}{2}) \\ 19 & (1 , \frac{19}{2} , \frac{158}{3} , \frac{224}{3} , 2063 , -\frac{31963}{3} , 49226 , -93158 , 89568) \\ 21 & (1 , \frac{21}{2} , \frac{261}{4} , \frac{103}{2} , \frac{18439}{5} , -\frac{43819}{2} , \frac{416075}{4} , -\frac{842511}{4} , \frac{838987}{4}) \\ 23 & (1 , \frac{23}{2} , \frac{239}{3} , -\frac72 , \frac{18793}{3} , -\frac{166675}{4} , \frac{409097}{2} , -440457 , \frac{1819419}{4}) \\ 25 & (1 , \frac{25}{2} , \frac{673}{7} , -\frac{308}{3} , \frac{40687}{4} , -\frac{223117}{3} , \frac{3029353}{8} , -\frac{7766767}{9} , 924612) \\ 27 & (1 , \frac{40}{3} , \frac{574}{5} , -\frac{782}{3} , \frac{31749}{2} , -\frac{377767}{3} , \frac{1997789}{3} , -\frac{3200043}{2} , \frac{12449991}{7}) \\ 29 & (1 , \frac{43}{3} , 136 , -\frac{1971}{4} , \frac{95695}{4} , -\frac{611732}{3} , \frac{7843816}{7} , -\frac{5658829}{2} , 3261986) \\ 31 & (1 , \frac{46}{3} , \frac{639}{4} , -\frac{4081}{5} , 34961 , -\frac{1271349}{4} , \frac{3629299}{2} , -\frac{9604509}{2} , \frac{17217287}{3}) \\ 33 & (1 , \frac{49}{3} , \frac{559}{3} , -\frac{3745}{3} , \frac{149146}{3} , -\frac{1437943}{3} , \frac{14208964}{5} , -\frac{47179361}{6} , \frac{29204708}{3}) \\ 35 & (1 , \frac{86}{5} , \frac{647}{3} , -\frac{5423}{3} , \frac{138007}{2} , -702255 , \frac{12961936}{3} , -\frac{49895929}{4} , \frac{79933081}{5}) \\ 37 & (1 , \frac{127}{7} , 248 , -\frac{7538}{3} , \frac{281180}{3} , -\frac{6018269}{6} , \frac{19196624}{3} , -19239668 , 25505873) \\ 39 & (1 , 19 , \frac{567}{2} , -\frac{20293}{6} , \frac{249733}{2} , -1400695 , \frac{27771502}{3} , -\frac{115754659}{4} , \frac{158611557}{4}) \\ 41 & (1 , 20 , \frac{1289}{4} , -4435 , \frac{817406}{5} , -\frac{9584901}{5} , \frac{39340096}{3} , -\frac{255333389}{6} , \frac{240894767}{4}) \\ 44 & (1 , \frac{64}{3} , \frac{773}{2} , -\frac{25593}{4} , \frac{713732}{3} , -\frac{8905816}{3} , \frac{64044575}{3} , -\frac{219261415}{3} , 108403952) \\ 46 & (1 , \frac{67}{3} , \frac{1300}{3} , -\frac{15979}{2} , \frac{1201225}{4} , -\frac{19479664}{5} , \frac{173607013}{6} , -\frac{717470237}{7} , 156709469) \\ 48 & (1 , \frac{116}{5} , \frac{1451}{3} , -\frac{29482}{3} , \frac{1123261}{3} , -\frac{15124499}{3} , \frac{154542655}{4} , -\frac{848673709}{6} , \frac{1113821099}{5}) \\ 50 & (1 , \frac{217}{9} , \frac{1075}{2} , -\frac{23857}{2} , \frac{923203}{2} , -6440835 , \frac{203561117}{4} , -192346048 , \frac{935443570}{3}) \\ 52 & (1 , 25 , \frac{1189}{2} , -\frac{42928}{3} , \frac{1689754}{3} , -\frac{24398269}{3} , \frac{397190041}{6} , -\frac{516106957}{2} , \frac{2581853365}{6}) \\ 54 & (1 , 26 , \frac{3931}{6} , -16986 , 680821 , -10159438 , \frac{255377579}{3} , -\frac{1025779505}{3} , \frac{2344398115}{4}) \\ 56 & (1 , \frac{107}{4} , \frac{6472}{9} , -\frac{59920}{3} , \frac{3263233}{4} , -\frac{25132953}{2} , \frac{324928729}{3} , -\frac{1343653420}{3} , 788666174) \\
  \bottomrule
\end{longtblr}

\begin{longtblr}[
  caption={The exact values of \(q_b[u_c]-\Theta_*b\norm{u_c}^2\) at \(b _
  {\mathrm {left}}\) and \(b _ {\mathrm {right}}\), for each relevant \(m\). Note
  that each entry in the last two columns is negative. These are the \texttt
  {leftval} and \texttt {rightval} in the Mathematica code.},
  label=tab:results]{colspec={Q[l,$]Q[l,$]Q[l,$]}}
  \toprule
  m & \text {value at }b _ {\mathrm {left}} & \text {value at }b _ {\mathrm {right}} \\
  \midrule
1 & -\frac{880729}{302400000} & -\frac{6980243}{43200000} \\
2 & -\frac{2144023}{26400000} & -\frac{1869503}{15840000} \\
3 & -\frac{73520892394069}{441080640000000} & -\frac{7094026303811}{147026880000000} \\
4 & -\frac{4971090856187}{70424640000000} & -\frac{15003448229701}{88216128000000} \\
5 & -\frac{4249053120354893}{24094029960000000} & -\frac{7754229321162943}{90815959080000000} \\
7 & -\frac{229901300678299}{8586236131200000} & -\frac{572212216127921}{8212921516800000} \\
9 & -\frac{18398796418230253}{192752239680000000} & -\frac{1551290674032577}{48188059920000000} \\
11 & -\frac{84592882004325061}{517575458400000000} & -\frac{2500623465004013}{14930061300000000} \\
13 & -\frac{2578732715722848241}{40613499251325000000} & -\frac{147427764565925159917}{1299631976042400000000} \\
15 & -\frac{30875440105493333003}{207941116166784000000} & -\frac{407095654671444209}{9451868916672000000} \\
17 & -\frac{3724661946690179383}{59411747476224000000} & -\frac{30698078457823837943}{207941116166784000000} \\
19 & -\frac{651540067114299163}{4218103781892000000} & -\frac{163015704928402063}{2300783881032000000} \\
21 & -\frac{2244364675513502287}{27576420423552000000} & -\frac{1348606058510812141867}{8548690331301120000000} \\ 23 & -\frac{23967067879630140497}{13715072748913536000000} & -\frac{102969599903876229833}{1371507274891353600000} \\
25 & -\frac{3863002566132888480750743}{34677189938352984422400000} & -\frac{22443283454681057796421}{147772116214572376800000} \\
27 & -\frac{140699100781274945456741}{3440197414519145280000000} & -\frac{1592302659119514991451681}{24081381901634016960000000} \\
29 & -\frac{471140417993004054666013}{3264197346165062361600000} & -\frac{6520707489631430808108989}{47874894410420914636800000} \\
31 & -\frac{251030553812494478387}{2872677515986278000000} & -\frac{1328744570222466022531}{34472130191835336000000} \\
33 & -\frac{706637667383371004011}{27214839625133160000000} & -\frac{107901732302496541474451}{1034163905755060080000000} \\
35 & -\frac{3819454061700392316245227}{29232366402676364928000000} & -\frac{6977969679139244469850843}{43848549604014547392000000} \\
37 & -\frac{1017997603808580389531}{13167732613818182400000} & -\frac{12205851819174081932939}{197515989207272736000000} \\
39 & -\frac{60527320889180452507}{3242396539654272000000} & -\frac{53587459125338346701}{470743171894195200000} \\
41 & -\frac{3927082740146217147964921}{31975911667396897200000000} & -\frac{25527543258444304704547}{18546028767090200376000000} \\
44 & -\frac{495422665040944338081773}{10874586170098470980160000} & -\frac{12810460897135999418331473}{181243102834974516336000000} \\
46 & -\frac{458359624839252842120111}{3377200673943624528000000} & -\frac{50883777169267119518541469}{496448499069712805616000000} \\
48 & -\frac{222871123484691571740523}{2514936672085677840000000} & -\frac{86935861841649749775059}{685891819659730320000000} \\
50 & -\frac{8030069374901109057588721}{207978077248136741145600000} & -\frac{202119542775190218561569}{12234004544008043596800000} \\
52 & -\frac{218596887196724529253781}{1925722937482747603200000} & -\frac{484421384033660892656423}{12654750732029484249600000} \\
54 & -\frac{8005301923003477643747}{117726323633922088800000} & -\frac{220635236772538173197783}{4395116082333091315200000} \\
56 & -\frac{292122117631230918736681}{17849121837112341309600000} & -\frac{185662057596890569982120027}{3248540174354446118347200000}\\
  \bottomrule
\end{longtblr}

\bibliographystyle{amsplain}
\bibliography{LS-disk}

@article {MR2912745,
    AUTHOR = {Bonnaillie-No\"{e}l, Virginie},
     TITLE = {Harmonic oscillators with {N}eumann condition on the
              half-line},
   JOURNAL = {Commun. Pure Appl. Anal.},
  FJOURNAL = {Communications on Pure and Applied Analysis},
    VOLUME = {11},
      YEAR = {2012},
    NUMBER = {6},
     PAGES = {2221--2237},
      ISSN = {1534-0392,1553-5258},
   MRCLASS = {35P15 (35J10 65N30)},
  MRNUMBER = {2912745},
MRREVIEWER = {Michael\ A.\ Perelmuter},
       DOI = {10.3934/cpaa.2012.11.2221},
       URL = {https://doi.org/10.3934/cpaa.2012.11.2221},
}

@article {MR2231969,
    AUTHOR = {Fournais, S. and Helffer, B.},
     TITLE = {On the third critical field in {G}inzburg-{L}andau theory},
   JOURNAL = {Comm. Math. Phys.},
  FJOURNAL = {Communications in Mathematical Physics},
    VOLUME = {266},
      YEAR = {2006},
    NUMBER = {1},
     PAGES = {153--196},
      ISSN = {0010-3616,1432-0916},
   MRCLASS = {58E50 (35J50 35Q55 82D55)},
  MRNUMBER = {2231969},
MRREVIEWER = {Xingbin\ Pan},
       DOI = {10.1007/s00220-006-0006-4},
       URL = {https://doi.org/10.1007/s00220-006-0006-4},
}

@article {MR2394546,
    AUTHOR = {Fournais, Soeren and Helffer, Bernard},
     TITLE = {Strong diamagnetism for general domains and application},
      NOTE = {Festival Yves Colin de Verdi\`ere},
   JOURNAL = {Ann. Inst. Fourier (Grenoble)},
  FJOURNAL = {Universit\'{e} de Grenoble. Annales de l'Institut Fourier},
    VOLUME = {57},
      YEAR = {2007},
    NUMBER = {7},
     PAGES = {2389--2400},
      ISSN = {0373-0956,1777-5310},
   MRCLASS = {35J55 (35P15 82D55)},
  MRNUMBER = {2394546},
MRREVIEWER = {Zuchi\ Chen},
       DOI = {10.5802/aif.2337},
       URL = {https://doi.org/10.5802/aif.2337},
}

@book {MR2662319,
    AUTHOR = {Fournais, S{\o}ren and Helffer, Bernard},
     TITLE = {Spectral methods in surface superconductivity},
    SERIES = {Progress in Nonlinear Differential Equations and their
              Applications},
    VOLUME = {77},
 PUBLISHER = {Birkh\"{a}user Boston, Inc., Boston, MA},
      YEAR = {2010},
     PAGES = {xx+324},
      ISBN = {978-0-8176-4796-4},
   MRCLASS = {35-02 (35P15 35Q56 47F05 47N50 49N60 82D55)},
  MRNUMBER = {2662319},
MRREVIEWER = {Yuri\ A.\ Kordyukov},
}

@article {MR3324161,
    AUTHOR = {Fournais, S{\o}ren and Sundqvist, Mikael Persson},
     TITLE = {Lack of diamagnetism and the {L}ittle-{P}arks effect},
   JOURNAL = {Comm. Math. Phys.},
  FJOURNAL = {Communications in Mathematical Physics},
    VOLUME = {337},
      YEAR = {2015},
    NUMBER = {1},
     PAGES = {191--224},
      ISSN = {0010-3616,1432-0916},
   MRCLASS = {78A25},
  MRNUMBER = {3324161},
MRREVIEWER = {Salvatore\ Esposito},
       DOI = {10.1007/s00220-014-2267-7},
       URL = {https://doi.org/10.1007/s00220-014-2267-7},
}

@article {MR4947381,
    AUTHOR = {Helffer, Bernard and L\'{e}na, Corentin},
     TITLE = {Eigenvalues of the {N}eumann magnetic {L}aplacian in the unit
              disk},
   JOURNAL = {J. Math. Phys.},
  FJOURNAL = {Journal of Mathematical Physics},
    VOLUME = {66},
      YEAR = {2025},
    NUMBER = {8},
     PAGES = {Paper No. 081513, 23},
      ISSN = {0022-2488,1089-7658},
   MRCLASS = {35P15 (81Q10)},
  MRNUMBER = {4947381},
       DOI = {10.1063/5.0260068},
       URL = {https://doi.org/10.1063/5.0260068},
}

@misc{DLMF,
         key = {DLMF},
       title = "{\it NIST Digital Library of Mathematical Functions}",
howpublished = "\url{https://dlmf.nist.gov/}, Release 1.2.7 of 2026-06-15",
         url = "https://dlmf.nist.gov/",
        note = "F.~W.~J. Olver, A.~B. {Olde Daalhuis}, D.~W. Lozier, B.~I. Schneider,
                R.~F. Boisvert, C.~W. Clark, B.~R. Miller, B.~V. Saunders,
                H.~S. Cohl, and M.~A. McClain, eds."}

@book {MR407617,
    AUTHOR = {Kato, Tosio},
     TITLE = {Perturbation theory for linear operators},
    SERIES = {Grundlehren der Mathematischen Wissenschaften, Band 132},
   EDITION = {Second},
 PUBLISHER = {Springer-Verlag, Berlin-New York},
      YEAR = {1976},
     PAGES = {xxi+619},
   MRCLASS = {47-XX},
  MRNUMBER = {407617},
}

@misc{LenaSundqvistBound,
    AUTHOR = {L\'{e}na, Corentin and Sundqvist, Mikael},
     TITLE = {A magnetic eigenvalue bound in the disk},
      YEAR = {2026},
      NOTE = {arXiv:2605.24188 [math.SP]},
HOWPUBLISHED = {\url{https://arxiv.org/abs/2605.24188}},
}

@article{SaintJames,
title = {{\'E}tude du champ critique {$H_{c_3}$} dans une g{\'e}om{\'e}trie cylindrique},
journal = {Physics Letters},
volume = {15},
number = {1},
pages = {13--15},
year = {1965},
issn = {0031-9163},
doi = {10.1016/0031-9163(65)91101-7},
author = {D. Saint-James},
}

\end{document}